\definecolor{violet}{rgb}{0.0,0.2,0.7}
\definecolor{rouge2}{rgb}{0.8,0.0,0.2}
 \theoremstyle{plain}    
 \newtheorem{thm}{Theorem}[section]
\theoremstyle{plain} 
\newtheorem{bigthm}{Theorem}
 \numberwithin{equation}{section} 
 \numberwithin{figure}{section} 
 \newtheorem{cor}[thm]{Corollary} 
 \theoremstyle{plain}    
 \newtheorem{prop}[thm]{Proposition} 
 \theoremstyle{plain}    
  \theoremstyle{thm*}    
  \newtheorem*{question*}{Question}
 \newtheorem{lem}[thm]{Lemma} 
 \theoremstyle{remark}
 \theoremstyle{remark}
 \newtheorem{rem}[thm]{Remark}
 \theoremstyle{definition}
\theoremstyle{plain}  
\theoremstyle{plain}
\theoremstyle{definition}
\newcommand{\Q}{{\mathbb{Q}}}
\newcommand{\R}{{\mathbb{R}}}
\newcommand{\cC}{{\mathcal{C}}}
\newcommand{\ie}{i.e.~}
\newcommand{\quotient}[2]{{\left.\raisebox{-.2em}{$#1$}\middle\backslash\raisebox{.2em}{$#2$}\right.}}
\newcommand{\quotientd}[2]{{\left.\raisebox{.2em}{$#1$}\middle\slash\raisebox{-.2em}{$#2$}\right.}}
\def\1{\bold{1}}
\renewcommand{\k}{\kappa}
\newcommand{\gk}{\bar g_\beta}
\newcommand{\om}{\omega}
\newcommand{\omke}{\om_{\rm KE}}
\newcommand{\ome}{\om_{\ep}}
\newcommand{\omkh}{\widehat \om_{\beta}}
\newcommand{\vp}{\varphi}
\newcommand{\ep}{\varepsilon}
\DeclareMathOperator{\Ric}{Ric}
\renewcommand{\d}{\partial}
\newcommand{\dbar}{\overline{\partial}}
\renewcommand{\ge}{\geqslant}
\renewcommand{\le}{\leqslant}
\newcommand{\wt}{\widetilde}
\newcommand{\wh}{\widehat}
\newcommand{\Hom}{\operatorname{Hom}}
\newcommand{\vol}{\operatorname{vol}}
\newcommand{\Vol}{\operatorname{Vol}}
\newcommand{\tr}{\operatorname{tr}}
\newcommand{\oX}{\overline X}
\newcommand{\oXm}{\overline X_{\rm min}}
\title[Degenerating Kähler-Einstein cones]{Degenerating Kähler-Einstein cones, locally symmetric cusps, and the Tian-Yau metric}
\date{\today}
\author{Olivier Biquard}
\address{Sorbonne Université and Université de Paris, CNRS, IMJ-PRG, F-75005 Paris, France}
\email{olivier.biquard@sorbonne-universite.fr}
\author{Henri Guenancia}
\address{Institut de Mathématiques de Toulouse; UMR 5219, Université de Toulouse; CNRS, UPS, 118 route de Narbonne, F-31062 Toulouse Cedex 9, France}
\email{henri.guenancia@math.cnrs.fr}
\date{}
\begin{document}

\begin{abstract}  
Let $X$ be a complex projective manifold and let $D\subset X$ be a smooth divisor. In this article, we are interested in studying limits when $\beta\to 0$ of Kähler-Einstein metrics $\omega_\beta$ with a cone singularity of angle $2\pi \beta$ along $D$. In our first result, we assume that $X\setminus D$ is a locally symmetric space and we show that $\omega_\beta$ converges to the locally symmetric metric and further give asymptotics of $\omega_\beta$ when $X\setminus D$ is a ball quotient. Our second result deals with the case when $X$ is Fano and $D$ is anticanonical. We prove a folklore conjecture asserting that a rescaled limit of $\omega_\beta$ is the complete, Ricci flat Tian-Yau metric on $X\setminus D$. Furthermore, we prove that $(X,\omega_\beta)$ converges to an interval in the Gromov-Hausdorff sense. 
\end{abstract}

\maketitle

\tableofcontents

\section*{Introduction}

Let $X$ be a complex projective manifold and let $D\subset X$ be a smooth divisor. In many geometrically meaningful situations, one is able to construct Kähler-Einstein metrics $\omega$ on the complement $X^\circ:=X\setminus D$ of the divisor $D$. Unless one imposes some growth condition near $D$, such a metric $\omega$ may not be unique \--- one can typically find complete and incomplete KE metrics on the same $X^\circ$, possibly with the same Einstein constant too. 

The focus of the present paper is to investigate the relationship between these different metrics in the specific setting of Kähler-Einstein metrics with {\it cone singularities} along $D$. Recall that if $\beta\in (0,1)$, a Kähler metric $\omega$ on $X^\circ$ is said to have cone singularities along $D$ with cone angle $2\pi \beta$ if it is locally quasi-isometric to the model cone metric \[\omega_{\beta,{\rm mod}}:=\frac{idz_1\wedge d\bar z_1}{|z_1|^{2(1-\beta)}}+\sum_{j\ge 2} idz_j\wedge d\bar z_j\] on each coordinate chart $(U, (z_i))$ where $U\cap D=(z_1=0)$. Such a metric is incomplete, has finite volume and automatically extends to a closed, positive $(1,1)$-current on $X$. There is an analogue of the Aubin-Yau (resp. Yau) theorem guaranteeing the existence and uniqueness of a negatively curved (resp. Ricci-flat) Kähler-Einstein metric $\omega_\beta$ with cone angle $2\pi \beta$ along $D$ under the condition that the adjoint $\R$-line bundle $K_X+(1-\beta)D$ is ample (resp. numerically trivial), cf e.g. \cite{Brendle, CGP, GP, JMR}. 
The positive curvature case is more complicated, in analogy with the absolute case $D=\emptyset$ and it involves the properness of some suitable analogue of the Mabuchi or Ding functional. \\

Let us now shift our focus to the small angle regime, that is when $0<\beta\ll 1$. We raise the following broad and somewhat vague question, which is closely related to \cite[Conjecture~1.11]{CR15} and \cite[Conjecture~1.4]{Odaka20} in the positive curvature case.

\begin{question*}
Let $X$ be a complex projective manifold and let $D\subset X$ be a smooth divisor. Assume that for any $0<\beta\ll 1$, there exists a unique Kähler-Einstein metric $\omega_\beta$ with cone angle $2\pi \beta$ along $D$, i.e. 
\begin{equation}
\label{eq KE omk}
\Ric \omega_\beta = \sigma \omega_\beta+(1-\beta)[D]
\end{equation}
for some $\sigma=\pm 1$. Do the metrics $\omega_\beta$ converge when $\beta\to 0$ to some canonical metric on $X^\circ$, possibly after rescaling?
\end{question*}

The aim of this paper is to provide an answer to the above question in two different geometric situations, one for each sign of the curvature.\\

{\bf The negative case.} 

\noindent
As recalled above, the existence of a KE metric solving \eqref{eq KE omk} with $\sigma =-1$ is equivalent to $K_X+(1-\beta)D$ being ample. For instance, if one assumes that $K_X+D$ is ample, then the same will hold true for $K_X+(1-\beta)D$ as long as $\beta$ is small enough. In that situation, it was proved in \cite{C2C} that when $\beta\to 0$, the KE metric $\omega_\beta$ converges to the complete KE metric with Poincaré growth constructed by R. Kobayashi \cite{KobR} and Tian-Yau \cite{TY87}. 

Another interesting example is provided by toroidal compactifications of ball quotients $X^\circ=\quotient{\Gamma}{\mathbb B^n}$, where $\Gamma \subset \mathrm{Aut}(\mathbb B^n)$ is a torsion-free, discrete subgroup. It is well-known that one can embed $X^\circ \hookrightarrow X$ as a Zariski-open subset of a projective orbifold $X$ such that $D:=X\setminus X^\circ$ is a disjoint union of abelian varieties,  cf \textsection~\ref{setup0} for references and more details. Note that the Bergman metric on $\mathbb B^n$ descends to the complex (complete) hyperbolic metric $\om_{\rm hyp}$ on $X^\circ$, which we normalize to have $\Ric \om_{\rm hyp}=-\om_{\rm hyp}$. Moreover, $K_X+(1-\beta)D$ is ample for $0<\beta\ll 1$ (but certainly not for $\beta=0$ unless $D=\emptyset$) and therefore $X^\circ$ also comes equipped with KE metrics $\omega_\beta$ with cone angle $2\pi \beta$ along $D$ whenever $\beta>0$ is small enough. The relationship between these metrics is provided by the following

\begin{bigthm}
\label{thmA}
Let $(X,D)$ be a toroidal compactification of a ball quotient $X^\circ=\quotient{\Gamma}{\mathbb B^n}$, and let $\omega_\beta$ be the KE metric solving \eqref{eq KE omk} for small $\beta$, with $\sigma=-1$. Then, we have convergence
\[\omega_\beta \underset{\beta\to 0}{\longrightarrow} \om_{\rm hyp}\]
both in  $C^{\infty}_{\rm loc}(X^\circ)$ and weakly as currents on $X$. Moreover, we have precise asymptotics of $\omega_\beta$ near $D$ when $\beta\to 0$.
\end{bigthm}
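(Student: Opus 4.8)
The plan is to recast \eqref{eq KE omk} as a family of complex \MoAmp{} equations and to compare $\om_\beta$, via the maximum principle, with an explicit model interpolating between a thin cone along $D$ and $\om_{\rm hyp}$ in the interior; the whole difficulty will be concentrated on a fixed neighbourhood of $D$. I would fix a smooth volume form $dV$ on $X$, a smooth Hermitian metric $h$ on $\cO_X(D)$ with holomorphic section $s$ cutting out $D$, and a smooth closed form $\theta_0\in c_1(K_X+D)$, and set $\theta_\beta:=\theta_0-\beta c_1(\cO_X(D),h)$ and $\mu_\beta:=dV/|s|_h^{2(1-\beta)}$. By \cite{CGP,GP,JMR}, $\om_\beta=\theta_\beta+dd^c\varphi_\beta$ where $\varphi_\beta$ is the unique bounded $\theta_\beta$-psh solution of $(\theta_\beta+dd^c\varphi_\beta)^n=e^{\varphi_\beta}\mu_\beta$, and $\om_\beta$ carries a cone of angle $2\pi\beta$ along $D$. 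On the other hand $\om_{\rm hyp}$, the descended Bergman metric, is the complete \KE{} metric on $X^\circ$ (cf. \cite{KobR,TY87}): writing $\om_{\rm hyp}=\theta_0+dd^c\varphi_{\rm hyp}$ it solves $(\theta_0+dd^c\varphi_{\rm hyp})^n=e^{\varphi_{\rm hyp}}\,dV/|s|_h^{2}$, with $\varphi_{\rm hyp}\to-\infty$ along $D$ like $-\log(-\log|s|_h^2)$. Note that $K_X+D$ is only big and nef here — contracted along the components of $D$ by the Baily--Borel morphism — which is exactly why $\varphi_{\rm hyp}$ is unbounded and why the cohomology class degenerates as $\beta\to0$.

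\textbf{The model near $D$.} This is the heart of the matter. By the structure theory of toroidal compactifications, a fixed neighbourhood $V$ of each component $D_i$ is biholomorphic to a neighbourhood of the zero section in a negative line bundle $L_i$ over an abelian variety $A_i$, and carries a distinguished Hermitian metric $h_0$ coming from the parabolic subgroup in terms of which, on $V$, $\om_{\rm hyp}=-dd^c\log v$ with $v:=-\log|s|_{h_0}^2>0$; in particular $\om_{\rm hyp}$ is a function of the single real variable $v$, invariant under the $A_i$-translations and the fibrewise circle action. Guided by the one-dimensional picture, in which the curvature $-1$ cone metric of angle $2\pi\beta$ on the disc is, up to normalisation, $-dd^c\log(1-|z|^{2\beta})=-dd^c\log(1-e^{-\beta v})$ — with local potential bounded near $z=0$, and converging as $\beta\to0$ to $-dd^c\log v$ — I would look for a model of the form $-dd^c\log F_\beta(v)$ and determine $F_\beta$ by the ordinary differential equation to which \eqref{eq KE omk} reduces under this ansatz (here the translation and circle symmetries are essential). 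Choosing the solution that carries an honest cone of angle $2\pi\beta$ along $D$ and that matches $-\log v$ as $\beta\to0$, one obtains on $V$ a Kähler metric $\eta_\beta:=-dd^c\log F_\beta(v)$ with the prescribed cone singularity, with bounded local potential near $D$, solving \eqref{eq KE omk} up to an error that is uniformly $o(1)$ on $\overline{V}$ and concentrated in a fixed compact annulus, and with $\eta_\beta\to\om_{\rm hyp}$ in $C^\infty_{\rm loc}(V\setminus D)$ together with quantitative control near $D$. I would then build a global approximate potential $\wt\varphi_\beta$, bounded on $X$, by gluing: on $V$, take $\wt\varphi_\beta$ with $\theta_\beta+dd^c\wt\varphi_\beta=\eta_\beta$; on $X\setminus V$, take $\wt\varphi_\beta=\varphi_{\rm hyp}$ — legitimate because there $\theta_\beta+dd^c\varphi_{\rm hyp}=\om_{\rm hyp}-\beta c_1(\cO_X(D),h)$ is a genuine metric for $\beta$ small, solving $(\cdot)^n=e^{\varphi_{\rm hyp}}\mu_\beta$ up to an $O(\beta)$ error. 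The \MoAmp{} error $f_\beta$ of the resulting $\wt\varphi_\beta$ then satisfies $\|f_\beta\|_{L^\infty(X)}\to0$.

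\textbf{From the model to the theorem.} Since $\varphi_\beta-\wt\varphi_\beta$ is bounded on $X$ — near $D$ both carry the same conical singularity, so these cancel — the comparison principle for the two \MoAmp{} equations gives at once $\|\varphi_\beta-\wt\varphi_\beta\|_{L^\infty(X)}\le\|f_\beta\|_{L^\infty(X)}\to0$. On any compact $K\subset X^\circ$ I would then bootstrap in the standard way: a Chern--Lu inequality against $\om_{\rm hyp}$ (which has bounded curvature) yields a two-sided bound $C^{-1}\om_{\rm hyp}\le\om_\beta\le C\om_{\rm hyp}$ on $K$, after which Evans--Krylov and Schauder give uniform $C^k(K)$ estimates; combined with $\wt\varphi_\beta\to\varphi_{\rm hyp}$ on $K$ (up to an additive constant), this yields $\om_\beta\to\om_{\rm hyp}$ in $C^\infty_{\rm loc}(X^\circ)$. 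Weak convergence of currents on $X$ follows since $[\om_\beta]=c_1(K_X+(1-\beta)D)\to c_1(K_X+D)=[\om_{\rm hyp}]$ in cohomology while the mass of $\om_\beta$ on a fixed neighbourhood of $D$ stays bounded and in fact tends to $0$ (using the uniform bound on $\varphi_\beta$ near $D$ together with $\|\varphi_\beta-\wt\varphi_\beta\|_\infty\to0$), so no mass escapes to $D$. Finally, feeding $\|\varphi_\beta-\wt\varphi_\beta\|_{L^\infty(V)}\to0$ into the conical Laplacian and Schauder estimates of \cite{CGP,GP,JMR} upgrades it to $\om_\beta-\eta_\beta\to0$ in a conical Hölder norm on a fixed punctured neighbourhood of $D$, \emph{uniformly up to $D$} — this is the precise asymptotics claimed: on such a neighbourhood $\om_\beta$ is uniformly quasi-isometric, with an effective rate, to the explicit interpolating model $\eta_\beta=-dd^c\log F_\beta(v)$.

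The main obstacle will be uniformity in $\beta$. Because $K_X+(1-\beta)D$ degenerates to the non-ample class $K_X+D$, none of the standard a priori estimates is uniform up to $D$, and the whole argument must be carried out in cusp/cone-adapted weighted norms attached to the toroidal boundary geometry. The two genuinely delicate points are: (i) constructing $F_\beta$, i.e. solving the reduced ODE so that $\eta_\beta=-dd^c\log F_\beta(v)$ simultaneously has an honest cone of angle $2\pi\beta$ at $D$, degenerates to $\om_{\rm hyp}$ as $\beta\to0$, and has a \MoAmp{} error concentrated in a fixed compact annulus and vanishing with $\beta$; and (ii) checking that $\varphi_\beta-\wt\varphi_\beta$ really does extend as a bounded function across $D$ (that the conical singularities cancel exactly), so that the global maximum principle applies. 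Granting these, the remainder is routine elliptic theory in the conical setting.
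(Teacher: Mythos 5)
Your proposal follows the general philosophy of the paper — reduce to a complex \MoAmp{} equation, construct an explicit model via an ODE ansatz coming from the circle and torus symmetries near a cusp (this is exactly the Calabi-ansatz model of Section~\ref{sec:calabi-metric}), and then compare — but the actual mechanics diverge at two key points, and in both places the argument as written has a real gap.

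\textbf{The claim $\|f_\beta\|_{L^\infty(X)}\to 0$ for the glued approximate potential.} The dominant term in the \MoAmp{} error of a Calabi-type model, when written against the fixed reference volume $dV/|s|_h^{2(1-\beta)}$, is not concentrated in the gluing annulus: it is the discrepancy $\log\big(i^{n^2}\Omega\wedge\bar\Omega\,|s|_h^2/dV\big)+\psi_\chi$ between the adapted volume form $i^{n^2}\Omega\wedge\bar\Omega$ (with respect to which the Calabi ansatz is an \emph{exact} solution) and the reference $dV$, spread over the whole neighbourhood $V$. This quantity is $dd^c$-closed when $dV$ is normalised by $-\Ric(dV)+\theta=\chi$, and can in principle be forced to vanish by choosing the local potential $\psi_\chi$ and the Hermitian metric $h$ adapted to $\Omega$ and then carefully matching the free additive constants: the Calabi potential $\psi_\beta(t)=\varphi_1(\beta t)+(n+1)\log\beta+\text{cst}$ approaches the hyperbolic potential $-(n+1)\log(-t)$ at rate $O(\beta^{n+1})$ on a fixed annulus, so the gluing mismatch is small. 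But none of this bookkeeping appears in your argument, and without it the error is merely $O(1)$. The paper explicitly obtains only an $O(1)$ potential bound (Lemma~\ref{lemma curvature} is not it — rather, Lemma~4.2: $\|\wh\psi_\beta-\psi_\beta\|_{L^\infty(U)}\le C_1$) and sidesteps the normalisation question for the weak convergence altogether: it replaces the gluing–comparison step by a monotonicity argument, showing that $\wh\psi_\beta=\frac{1}{1-\beta}(\wh\varphi_\beta+\tau_\beta)$ is an almost-decreasing family of quasi-psh functions whose limit must be $\wh\varphi$ by uniqueness of finite-energy solutions. That route requires no explicit model and no normalisation compatibility. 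If you want to keep the gluing approach, you must verify the normalisation identity $e^{-\psi_\chi}\,dV/|s|_h^2=\text{cst}\cdot i^{n^2}\Omega\wedge\bar\Omega$ on $U$, fix the additive constants consistently with the gluing at $\partial V$, and check that the residual error is $O(\beta)$; only then does the comparison principle give $\|\varphi_\beta-\wt\varphi_\beta\|_{L^\infty}\to 0$.

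\textbf{Upgrading to asymptotics by ``conical Schauder estimates of \cite{CGP,GP,JMR}''.} This is the more serious gap. Those Schauder estimates are established for a \emph{fixed} cone angle; they are not uniform as $\beta\to 0$, precisely because the reference cone metric degenerates (a ball of radius $\rho$ becomes a collar of $|z_1|$-extent $\rho^{1/\beta}$, the injectivity scale in the angular direction shrinks to $\beta\rho$, etc.). In fact this paper proves a $\beta$-uniform conical Schauder estimate (Theorem~\ref{Schauder conique} and Corollary~\ref{cor strong schauder}), but it is nontrivial and is deployed only in the Fano/positive half of the paper; for the negative case the asymptotics near $D$ are instead obtained by two maximum-principle arguments that scale correctly in $\beta$: a Chern–Lu estimate comparing $\wh\omega_\beta$ to the Calabi model (Lemma~4.3), which crucially uses that the bisectional curvature of the Calabi model is bounded \emph{uniformly in $\beta$} when $\sigma=-1$ and $D$ is flat (Lemma~\ref{lemma curvature}), and then a max-principle argument on $\log(\wh\omega_\beta^n/\omega_\beta^n)$ (Lemma~4.4) showing the quasi-isometry constant tends to $1$. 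Your route would need the uniform Schauder theory of Section~\ref{sec:unif-scha-estim}, or equivalently these Chern–Lu estimates, to produce what you call ``the precise asymptotics''; the cited references are insufficient.

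Minor remarks. Your Chern–Lu step ``on any compact $K\subset X^\circ$'' still needs a barrier (as in the paper's $\delta\log|\wt s|^2$) to ensure the maximum is interior, since $X^\circ$ is noncompact and $K$ has boundary; and the claim that the mass of $\omega_\beta$ near $D$ ``tends to $0$'' is incorrect as stated (the limiting hyperbolic metric has positive finite mass near $D$) — what one needs and gets from either argument is $L^1_{\rm loc}$ convergence of the potentials, which prevents mass escape to $D$ because $D$ is pluripolar. Finally, note that your ODE model $-dd^c\log F_\beta(v)$ is, after reparametrisation, exactly the Calabi ansatz of Section~\ref{sec:calabi-metric}; the paper constructs and analyses it carefully there, including the asymptotic expansions \eqref{t - infty}–\eqref{t 0} needed to quantify the gluing error, and the curvature bound (Lemma~\ref{lemma curvature}) that makes the Chern–Lu argument work.
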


A few remarks are in order here. 
\begin{enumerate}[label=$(\roman*)$]
\item The asymptotics of $\omega_\beta$ in $C^0$ are given in Theorem~\ref{KE asymptotics}. They are obtained by constructing a model metric on the normal bundle of $D$ using the Calabi Ansatz, cf \textsection~\ref{sec:calabi-metric}. 
\item The first half of the statement (i.e. the convergence part) remains true in the more general setting of quotients of bounded symmetric domains, cf~Theorem~\ref{weak convergence}. In that case, $D$ needs not be smooth anymore but has simple normal crossings up to a finite cover. 
\item Assume that the lattice $\Gamma$ is arithmetic. By choosing the angles carefully along each torus at the boundary, one can find a sequence $\om_{\beta_m}$ of orbifold KE metrics that can be globally desingularized so that $(X^\circ, \om_{\rm hyp})$ is the limit of {\it smooth, compact} KE spaces up to the action of a larger and larger group of isometries, cf \textsection~\ref{section covers}. In a nutshell, one can "close the complex hyperbolic cusp".  This gives the closest analog to the Dehn filling of real hyperbolic cusps by Einstein manifolds, due to Thurston in dimension 3 and Anderson \cite{And06} in higher dimension: in the complex case, one cannot fill the cusp, but this is possible up to some larger and larger covering. This answers a question of Misha Kapovich to the first author several years ago.
\end{enumerate}

\bigskip

{\bf The positive case.}

\noindent
In general, it is not so easy to characterize the existence of a metric $\omega_\beta$ solving \eqref{eq KE omk} with $\sigma=1$, for small values of $\beta$. However, a result of Berman \cite{rber} (later generalized by Song-Wang \cite{SW}) asserts that if $X$ is a Fano manifold (that is, $-K_X$ is ample) and $D\in |-K_X|$ is smooth, then there exists $\beta_0>0$ such that for any $0<\beta<\beta_0$, there exists a unique Kähler metric $\omega_\beta$ on $X^\circ$ such that $\Ric \omega_\beta=\omega_\beta$ and $\omega_\beta$ has cone singularities with cone angle $2\pi \beta$ along $D$, i.e. $\omega_\beta$ solves \eqref{eq KE omk}. 

The existence of such a metric had been conjectured by Donaldson \cite[\textsection~6]{Don} in relation with his program to prove that a $K$-stable Fano manifold admits a Kähler-Einstein metric by using the continuity path $\Ric \om_t = t\om_t+(1-t)[D]$ involving metrics with cone singularities. He also predicted that the (conjectural then) $\omega_\beta$ would actually converge to the Ricci flat complete Kähler metric $\om_{\rm TY}$ constructed by Tian and Yau in \cite{TY1}. 

If $n=1$, then the metrics $\omega_\beta$ on $\mathbb P^1\setminus \{0,\infty\}$ are completely explicit, given by the expression $\omega_\beta=\frac{\beta^2idz \wedge d\bar z}{|z|^{2(1-\beta)}(1+|z|^{2\beta})^2}$ and one sees immediately that $\beta^{-2}\omega_\beta$ converges locally smoothly to the cylinder $\om_{\rm cyl}=\frac{ idz \wedge d\bar z}{4|z|^2}$  while $(\mathbb P^1, \omega_\beta)$ converges in the Gromov-Hausdorff sense to the interval $([0,\frac \pi 2],dt^2)$ (set $r=|z|^\beta$ to that $g_\beta=\frac{dr^2+\beta^2r^2d\theta^2}{(1+r^2)^2}$ and reparametrize by $t=\tan^{-1}(r)$), cf also \cite{RubiZhang20}. Our second main result establishes the conjecture in full generality.

\begin{bigthm}
\label{thmB}
Let $X$ be a Fano manifold of dimension $n$ and let $D\in |-K_X|$ be a smooth anticanonical divisor. Then up to a rescaling factor, the conic KE metrics $\omega_\beta$ solving \eqref{eq KE omk} with $\sigma =1$ for small $\beta$ converge to the Tian-Yau metric: 
\[\beta^{-1-\frac 1n} \omega_\beta \underset{\beta\to 0}{\longrightarrow} \om_{\rm TY}\]
 in  $C^{\infty}_{\rm loc}(X\setminus D)$. Moreover, we have precise asymptotics of $\omega_\beta$ near $D$ when $\beta\to 0$. 

 Finally fix a point $p\in D$, denote $g_\beta$ the Riemannian metric associated to the Kähler form $\omega_\beta$ and consider the renormalized volume forms $\nu_\beta=\frac{d\vol_{g_\beta}}{\vol_{g_\beta}(X)}$. Then the spaces $(X,g_\beta,p,\nu_\beta)$ converge in the measured Gromov-Hausdorff sense to the interval
 \[
 \big([0,\tfrac \pi2],g_\infty=\tfrac2{n+1}ds^2,0,\nu_\infty= d(-\cos^{\frac{2n}{n+1}}s)\big).
\]
\end{bigthm}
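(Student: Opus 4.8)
\emph{Strategy and the local model.} The plan is to build a precise model for $\om_\beta$ near $D$ out of a Calabi-type Ansatz — matched to $\om_{\rm TY}$ at one end and to a rescaled \K{} metric on the interior of $X$ at the other — then to promote it to the genuine conic \KE{} metric by uniform-in-$\beta$ estimates for the conic complex Monge--Ampère equation, and finally to read off the collapsing behaviour from this model. For the model: since $D\in|-K_X|$, adjunction gives $K_D=(K_X+D)|_D=\cO_D$, so $D$ is Calabi--Yau and Yau's theorem provides a Ricci-flat \K{} metric $\om_D$ on $D$ representing $c_1(N_D)|_D$, with $N_D=-K_X|_D$ ample. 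Fix a Hermitian metric $h$ on $-K_X=\cO_X(D)$ whose curvature $\theta=-i\d\dbar\log h$ restricts to $\om_D$ along $D$ and is positive near $D$; let $s$ cut out $D$ and put $u=-\log\|s\|_h^2$, so that $i\d\dbar u=\theta$ on $X^\circ$ and $u\to+\infty$ along $D$. On a disc-bundle neighbourhood of $D$ one then uses the Calabi Ansatz $\om_\chi=i\d\dbar\chi(u)=\chi'(u)\,\theta+\chi''(u)\,i\d u\wedge\dbar u$: plugging it into \eqref{eq KE omk} reduces the equation to a single ODE for the profile $\chi=\chi_\beta$, subject to the two boundary conditions ``cone angle $2\pi\beta$ along $D$'' (at $u=+\infty$) and ``matching the interior of $X$'' (at $u$ of order one). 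This ODE is explicitly integrable, and its solution, after multiplication by $\beta^{-1-\frac1n}$, converges as $\beta\to0$ on any fixed compact $u$-range to the Tian--Yau profile, which equals $u^{(n+1)/n}$ up to a positive constant. Gluing $\om_{\chi_\beta}$ to $\beta$ times a fixed \K{} metric $\om_0$ representing $c_1(X)$ away from $D$ yields an approximate solution $\om_\beta^{\rm app}$ that is uniformly conic of angle $2\pi\beta$ along $D$ and has uniformly small Monge--Ampère discrepancy.

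\emph{From the model to $\om_\beta$, and convergence to $\om_{\rm TY}$.} Writing $\om_\beta=\om_\beta^{\rm app}+i\d\dbar\f_\beta$, one must bound $\f_\beta$ uniformly in $\beta$ in weighted norms adapted to the Calabi geometry near $D$ and to the collapsing in the interior. The scheme is the usual one, with $\beta$-uniform constants: a $C^0$ bound on $\f_\beta$ (either by Ko\l odziej-type pluripotential estimates against the model, for which the relevant capacities stay uniformly controlled, or by sub- and super-solutions built from $\om_\beta^{\rm app}$); then a Laplacian estimate $C^{-1}\om_\beta^{\rm app}\le\om_\beta\le C\om_\beta^{\rm app}$ via the Chern--Lu / Aubin--Yau inequality (using the bounded bisectional curvature of the model); then Evans--Krylov and Schauder on unit balls for the model metric. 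This gives $\om_\beta\asymp\om_\beta^{\rm app}$ together with locally uniform higher-order bounds on $X\setminus D$. The convergence $\beta^{-1-\frac1n}\om_\beta\to\om_{\rm TY}$ in $C^\infty_{\rm loc}(X\setminus D)$ then follows: one has $\Ric(\beta^{-1-\frac1n}\om_\beta)=\beta^{1+\frac1n}(\beta^{-1-\frac1n}\om_\beta)$ on $X^\circ$, which tends to $0$, so every $C^\infty_{\rm loc}$-subsequential limit is a complete Ricci-flat \K{} metric on $X^\circ$ carrying the Tian--Yau asymptotics along $D$ (inherited from $\beta^{-1-\frac1n}\om_\beta^{\rm app}$), hence coincides with $\om_{\rm TY}$ by uniqueness in the Tian--Yau setting; and the asymptotics of $\om_\beta$ near $D$ are those of $\om_\beta^{\rm app}$.

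\emph{The measured Gromov--Hausdorff limit.} Take as candidate $\e_\beta$-isometry the function $s_\beta:X\to[0,\tfrac\pi2]$ obtained by suitably normalising the $g_\beta$-distance to $D$ (equivalently, an explicit function of $u$ read off from $\chi_\beta$), so that $s=\tfrac\pi2$ is the image of $D$ and $s=0$ that of the interior of $X$. Three things need to be checked. First, the fibres of $s_\beta$ collapse: from the model, the restriction of $g_\beta$ to the $D$-directions and to the $S^1$-direction of the normal bundle has size $O(\beta)$, so every level set of $s_\beta$ has diameter $O(\sqrt\beta)\to0$. Second, $s_\beta$ is almost distance-preserving onto $([0,\tfrac\pi2],\tfrac2{n+1}ds^2)$: this is the one-dimensional reduction of $g_\beta$, in which the constants $\tfrac2{n+1}$ and $\tfrac\pi2$ — and the fact that the limiting length is finite, which is precisely why $\beta^{-1-\frac1n}$ is the right rescaling — fall out of the explicit profile $\chi_\beta$ (for $n=1$ this is the elementary computation recalled before the statement). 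Third, $(s_\beta)_*\nu_\beta\rightharpoonup\nu_\infty$: the conic \KE{} volume form is $e^{\f_\beta}$ times the explicit conic reference measure and, near $D$, is comparable to $\beta^{n+1}\om_{\rm TY}^n$; integrating over the fibres of $s_\beta$, normalising by $\Vol(X,\om_\beta)=\beta^n(-K_X)^n$, and changing variables $u\mapsto s$ via $\chi_\beta$ produces the density of $\nu_\infty$, namely $\nu_\infty=d(-\cos^{2n/(n+1)}s)$.

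\emph{Main obstacle.} The crux is the uniform-in-$\beta$ control in the second step: as $\beta\to0$ the geometry simultaneously collapses in the interior (where $[\om_\beta]=\beta\,c_1(X)\to0$) and, after the natural rescaling, becomes complete and non-compact near $D$, so the choice of weighted Hölder spaces, the uniform invertibility of the linearised Monge--Ampère operator (equivalently the uniform pluripotential estimates), and the matching between the Calabi region and the interior must all be arranged with care. A secondary technical point is the exact identification of the limiting pair $(g_\infty,\nu_\infty)$, which needs the $\beta\to0$ asymptotics of the reduced ODE.
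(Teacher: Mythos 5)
Your high-level strategy --- Calabi Ansatz near $D$, gluing to an approximate conic metric, and reading off the collapsing from the explicit profile --- matches the paper. But the middle step, which you correctly single out as the crux, contains two genuine gaps.

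\smallskip
\emph{The interior gluing piece is wrong.} You propose to glue the Calabi region to $\beta\om_0$ for a fixed \K{} metric $\om_0\in c_1(X)$. This cannot produce a small-error approximate solution. At the transition $\beta t\to0^-$, the Calabi profile $\varphi_\beta(t)=\varphi_1(\beta t)\sim\big(\tfrac{-n\beta t}{n+1}\big)^{1+\frac1n}$ makes $\om_{\beta,L}\approx\beta^{1+\frac1n}\om_{TY,L}$, which is of scale $\beta^{1+\frac1n}$, not $\beta$ --- so there is already an $O(\beta^{1/n})$ jump across the gluing annulus. Moreover, the Calabi region fixes the normalizing constant to be $C_\beta=-\log(n+1)+(n+1)\log\beta$, while the $\beta\om_0$ region would require $C_\beta\approx n\log\beta+O(1)$; the two are incompatible and leave an $O(|\log\beta|)$ discrepancy in $P_\beta(0)$, since $\Ric(\beta\om_0)=\Ric\om_0$ is a fixed $(1,1)$-form of order one. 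The correct interior piece is $\beta^{1+\frac1n}\om_{\rm TY}$ --- the complete Ricci-flat Tian--Yau metric itself --- which solves \eqref{eq:26} exactly so that the only interior error is $\varphi_\beta=O((-\beta\tilde t)^{1+\frac1n})$, yielding the bound \eqref{eq:58}. A further technical point you skip: any identification of a neighbourhood of $D$ with a disc bundle in $L$ introduces an error in the complex structure of size $O(e^{t/2})$, and controlling its covariant derivatives uniformly in $\beta$ is what forces the fibration by extremal discs of Proposition~\ref{prop:extremal-discs}.

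\smallskip
\emph{The a priori estimate scheme does not close for $\sigma=+1$.} Your second step is the classical route: a $C^0$ bound by pluripotential or barrier arguments, then Chern--Lu, then Evans--Krylov/Schauder. This is exactly what the paper does for $\sigma=-1$ (Theorem~\ref{weak convergence}); it fails for $\sigma=+1$. The equation $(\om_\beta^{\rm app}+i\d\dbar\varphi)^n=e^{-\varphi+f}\cdot(\text{ref.})$ has the wrong sign of $\varphi$ in the exponential: the maximum principle yields no $\sup/\inf$ bounds, Ko\l odziej-type estimates need input they cannot self-generate, and a comparison argument against barriers built from $\om_\beta^{\rm app}$ does not close --- this is the same obstruction that makes the Fano \KE{} existence problem require K-stability/properness. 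In addition, the ``bounded bisectional curvature of the model'' you invoke for Chern--Lu is false uniformly in $\beta$: Lemma~\ref{lemma curvature} gives $|K(g_{\beta,L})|=O(\beta^{-1})$ near $D$ in the positive case, so the Chern--Lu constant would blow up. This is precisely why the authors write that ``the pluripotential techniques seem to fall short in the context of Theorem~\ref{thmB}.'' The paper's actual route is perturbative: write $P_\beta(\varphi)=\log\frac{(\om_\beta+i\d\dbar\varphi)^n}{i^{n^2}\Omega\wedge\bar\Omega}+(\varphi_\beta+\varphi)-C_\beta$; prove $\beta$-uniform Schauder estimates for the flat cone (Theorem~\ref{Schauder conique}, Corollary~\ref{cor strong schauder}) and for the 1- and 2-collapsed Calabi geometries (Proposition~\ref{prop:semi-local-schauder}, Corollary~\ref{coro:semilocal_estimate}); establish by a compactness/blow-up argument the quantitative invertibility $\|L_\beta^{-1}\|\lessapprox\beta^{-\frac1n-\delta}$ for $L_\beta=\tfrac12\Delta_{g_\beta}+1$ on weighted H\"older spaces (Proposition~\ref{prop:bound_L-1}, Corollary~\ref{coro1}); and close a fixed-point argument by tuning the gluing radius $t_\beta=-\beta^{-1+\mu}$ so that \eqref{eq:63} holds. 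These estimates are the core of the proof, not a routine uniformization of the classical scheme --- and they also yield the existence of $\om_\beta$, recovering Berman's theorem.

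\smallskip
Your Gromov--Hausdorff sketch is in outline close to the computation in Section~\ref{sec:geometry}, but it rests entirely on the quantitative weighted bound $\|\varphi(\beta)\|_{C^{2,\alpha}_\delta}\lessapprox\beta^{2+\frac1n-\epsilon}$ of \eqref{eq:64}, which only the fixed-point scheme produces; without it one cannot transfer the computations from the model $g_{\beta,L}$ to the \KE{} metric $\hat g_\beta$ (Lemma~\ref{rescaling}).
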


As before, a few remarks:
\begin{enumerate}[label=$(\roman*)$]
\item  The fibers of the collapsing to an interval are the normal circle bundle of the divisor $D$.  The two endpoints of the interval correspond respectively to the conical divisor $D$ itself and to the Tian-Yau metric.  Over interior points of the interval, the fibres have two speeds of collapsing: speed $\beta$ for the circle directions and $\sqrt \beta$ for the divisor directions. See section~\ref{sec:geometry} for a detailed discussion, a relevant picture, as well as the computation of the other possible nontrivial Gromov-Hausdorff limits of the rescaled metrics $g_\beta$: we obtain $\mathbb{R}_+$, $\mathbb{R}_+\times D$ and $\mathbb{R}_+\times \mathbb{C}^{n-1}$ at $s=0$, and at $s=\frac \pi2$ the previous Tian-Yau metric on $X\setminus D$ and $\Bbb{R}_+$.

\item  Several recent papers study cases of collapsing of Ricci flat Kähler metrics to an interval, for K3 surfaces \cite{HSVZ} or in higher dimension \cite{SZ19}. Our theorem probably gives the first general example of collapsing of Kähler-Einstein metrics with positive Ricci: of course this is made possible by the presence of a cone angle going to zero.

\item In the process of the proof, we construct the Kähler-Einstein metrics $\omega_\beta$ for small $\beta$, therefore recovering Berman's result. 
\end{enumerate}

\bigskip

{\bf Strategy of the proof.}

Although Theorem~\ref{thmA} and Theorem~\ref{thmB} have a quite different flavor, their proofs share a common approach. Indeed, in both proofs, we rely on the existence of a model metric living in the neighborhood of the zero section in the normal bundle of $D$. That metric is provided by the Calabi Ansatz, cf \textsection~\ref{sec:calabi-metric}, and its curvature is computed in the following section, \textsection~\ref{sec:curv-calc}.

The proof of Theorem~\ref{thmA} goes as follows. We use pluripotential methods, especially the comparison principle (quite suited in negative curvature), in order to estimate the potential of $\omega_\beta$ with enough precision to establish the weak convergence. The local smooth convergence away from $D$ follows from a suitable use of Chern-Lu formula. In order to further compute the asymptotics of $\omega_\beta$ (at order zero), we show that $\omega_\beta$ is asymptotically close to the Calabi metric constructed and analyzed in \textsection~\ref{sec:calabi-metric}. This relies on the previous step as well as the application of the maximum principle and Chern-Lu formula, which in turn uses crucially that the curvature of the Calabi metric is bounded, cf~\textsection~\ref{sec:curv-calc}.

The proof of Theorem~\ref{thmB}, technicallly more involved than the previous one, relies on gluing methods. The general idea is to construct a model cone metric $\tilde \omega_\beta$ by gluing the Calabi metric $\om_{\beta,L}$ near $D$ and the Tian-Yau metric $\om_{TY}$ away from $D$. For $\beta\ll 1$,  the implicit function theorem allows us to find the Kähler-Einstein metric $\omega_\beta=\tilde \omega_\beta+dd^c \varphi_\beta$ with a control on $\varphi_\beta$ and its covariant derivatives that is sufficiently precise that one can derive the desired smooth convergence $\beta^{-1-\frac 1n}\omega_\beta \to \om_{TY}$ away from $D$ as well as the global Gromov-Hausdorff convergence of $(X,\omega_\beta)$ to an interval. 

Some of the main technical steps include: estimating the curvature of the Calabi metric; finely gluing the Calabi metric which lives on the normal bundle $L$ of $D$ onto a neighborhood of $D$ in $X$ using a fibration in extremal disks; establishing a Schauder estimate for the model cone metric on $\mathbb C^*\times \mathbb C^{n-1}$ with cone angle $2\pi \beta$ which is uniform in $\beta$; establishing a uniform Schauder estimate in suitable weighted Hölder spaces for the family of collapsing cone metrics $\tilde \omega_\beta$ mentioned above. This is similar in spirit to other gluing problems, especially the papers \cite{HSVZ,SZ19} mentioned above, but our techniques are different. 

 Applying the techniques used for Theorem~\ref{thmB} to Theorem~\ref{thmA} would probably enhance our $C^0$ estimates for the metrics to estimates on all derivatives, at the expense of a much more technical proof. On the other hand, the pluripotential techniques seem to fall short in the context of Theorem~\ref{thmB}.

\subsection*{Acknowledgements}
O.B. would like to thank Misha Kapovich for discussions a long time ago about closing complex hyperbolic cusps.
H.G. would like to thank Benoît Cadorel for the many insightful discussions about toroidal compactifications of quotients of bounded symmetric domains. The authors would like to thank the referee for reading the manuscript carefully and for the several suggestions that helped us improve the paper.

H.G has benefited from the support of the ANR project GRACK as well as from the state aid managed by the ANR under the "PIA" program bearing the reference ANR-11-LABX-0040, in connection with the research project HERMETIC.

\section{Closing the cusps of locally symmetric spaces}

\subsection{Setup.} 
\label{setup0}
Let $X=\quotient{\Gamma}{\Omega}$ be an $n$-dimensional quotient of a bounded symmetric domain $\Omega$ by a torsion-free lattice $\Gamma \subset \mathrm{Aut}(\Omega)^\circ$. 
 It is well-known that $X$ is a quasi-projective variety that can be compactified in several meaningful ways. 

The Satake-Baily-Borel compactification $X\hookrightarrow \oX_{\rm min}$ is a \textit{singular}, minimal compactification in the sense that given any normal compactification $X\hookrightarrow \oX'$, the identity morphism on $X$ extends to a holomorphic map $\oX'\to \oX_{\rm min}$. The variety $\oX_{\rm min}$ is normal, has log canonical singularities and $K_{\oX_{\rm min}}$ is ample. In a modern terminology, $\oX_{\rm min}$ is a (normal) stable variety. 

The Ash-Mumford-Rapoport-Tai \cite{AMRT} toroidal compactification $X\hookrightarrow \oX$ is a compactification with finite quotient singularities such that $\oX\setminus X$ is a (reduced) divisor with simple normal crossings that we denote by $D=\sum_{\lambda=1}^N D_\lambda$. Moreover, the birational morphism 
$\pi:\oX \to \oX_{\rm min}$ satisfies
$$K_{\oX}+D=\pi^*K_{\oX_{\rm min}}.$$

If $\Gamma$ is neat, the toroidal compactification $\overline X$ is actually smooth. Moreover, any torsion-free lattice  $\mathrm{Aut}(\Omega)$ admits a finite index subgroup which is neat. As a result, one can find $\Gamma'<\Gamma$ with finite index such that $Y=\quotient{\Gamma'}{\Omega}$ admits a smooth toroidal compactification $(\overline Y,D')$. Moreover, the finite étale morphism $f:Y\to X$ extends uniquely to a finite cover $f:\overline Y \to \overline X$ and one has $K_{\overline Y}+D'=f^*(K_{\oX}+D)$. 

In the case where $\Omega=\mathbb B^n$ is the euclidean unit ball in $\mathbb C^n$, $\oXm\setminus X$ consist of  finitely many singular points $\{x_1, \ldots, x_N\}$ and $D=\sqcup_{\lambda=1}^N D_\lambda$ is a disjoint union of abelian varieties $D_\lambda$ with negative normal bundle, which are contracted onto those singular points by $\pi$. \\

\subsection{The Kähler-Einstein metric.} 
\label{subsec KE}

The Bergman metric $\om_{\rm Berg}$ on $\Omega$ is invariant under the action of  $\mathrm{Aut}(\Omega)$ hence it descends to a Kähler-Einstein metric $\omke$ on $X$. Moreover, one can prove that $\omke$ extends to a closed, positive current $\omke \in c_1(K_{\oX}+D)$ and 
$$\int_{X} \omke^n=c_1(K_{\oX}+D)^n.$$
In particular, $\omke=\pi^*\omega_{\rm min}$ for some closed, positive current $\om_{\rm min} \in c_1(K_{\oXm})$ which coincides with the singular Kähler-Einstein metric constructed in \cite{BG}. 

If $\Gamma'<\Gamma$ has finite index, then the Kähler-Einstein metric of $Y=\quotient{\Gamma'}{\Omega}$ is simply $f^*\omke$ where $f:Y\to X$ is the finite étale cover induced by the lattice inclusion.

In the case where $\Omega=\mathbb B^n$ and $\Gamma$ is neat, one has a very precise description of $\omke$ near $D$, cf. e.g. \cite[Eq.~(8)]{Mok12}. In particular, if $(z_1, \ldots, z_n)$ is a system of holomorphic coordinates on some open set $U\subset \oX$ such that $D\cap U=(z_1=0)$, then $\om|_{U}$ is quasi-isometric to 
\begin{equation}
\label{model}
\frac{idz_1\wedge d\bar z_1}{|z_1|^2(-\log |z_1|)^2}+\frac{1}{(-\log |z_1|)}\sum_{k=2}^n idz_k\wedge d\bar z_k.
\end{equation}
One can actually say much more and exhibit an exact formula for $\omke$ on a small enough neighborhood $U$ of $D$ after identifying $U$ with a neighborhood of the zero section in the normal bundle $N_{D/X}\to D$ of $D$, cf~\eqref{hyp}.

\subsection{Monge-Ampère equation.} 

\noindent
One can write down the Monge-Ampère equation satisfied by $\omke$. In order to do so, we pick:

$\cdotp$ A Kähler metric $\om_{\oXm} \in c_1(K_{\oXm})$ and set $\chi:=\pi^*\om_{\oXm}$. It is a smooth, semipositive form on $\oX$. Recall that a Kähler metric on a singular complex space $Y$ is defined to be a Kähler metric on the regular locus $Y_{\rm reg}$ which is locally the restriction of an ambient Kähler form under local embeddings $Y \underset{\rm loc}{\hookrightarrow} \mathbb C^N$. In particular, its bisectional curvature is bounded above locally near any point. 

$\cdotp$  Holomorphic sections $s_\lambda\in H^0(\oX,\mathcal O_{\oX}(D_\lambda))$ such that $D_\lambda=(s_\lambda=0)$ and smooth hermitian metrics $h_\lambda$ on $\mathcal O_{\oX}(D_\lambda)$. with Chern curvature form $\theta_\lambda:=i\Theta_{h_\lambda}(D_\lambda)$. We set $s=\bigotimes s_\lambda$, $|s|:=\prod_k |s_\lambda|_{h_\lambda}$, $\theta=\sum_k \theta_\lambda$. Up to scaling $h_\lambda$, one can assume that $|s_\lambda|_{h_\lambda}<e^{-1}$.

$\cdotp$ A smooth volume form $dV$ on $\oX$ satisfying  $-\mathrm{Ric}(dV)+\theta=\chi$. 
Then one can write the Kähler-Einstein metric $\omke$ on $\oX$ as $\omke=\chi+dd^c\wh \vp$ for the unique $\chi$-psh function $\wh \vp$ solution of the (non-pluripolar) Monge-Ampère equation
\begin{equation}
\label{KE 3}
\omke^n=(\chi+dd^c\wh \vp)^n=\frac{e^{\wh \vp} dV}{|s|^2}.
\end{equation} 
One knows that for any $\ep>0$, there exists a constant $C_\ep$ such that the following set of inequalities
\begin{equation}
\label{ineq}
C_1 \ge\wh \vp\ge -(n+1+\ep) \log (-\log |s|) -C_{\ep}
\end{equation}
hold on $\oX$, cf. \cite[Prop.~D]{DGG}.\\

\subsection{Conic approximation.}

As $\pi$ can be obtained as a sequence of blow ups of smooth centers, there exist coefficients $a_\lambda \in \mathbb Q_+$ such that $-\sum a_\lambda D_\lambda$ is $\pi$-ample. In particular, for $\beta>0$ small enough, the $\Q$-line bundle $K_X+\sum_\lambda (1-\beta a_\lambda)D_\lambda$ is ample. Moreover, up to scaling down the $a_\lambda$ (by the same factor), one can assume that $\chi-\wt \theta$ is a Kähler form, where $\widetilde \theta:= \sum a_\lambda \theta_\lambda$. The Monge-Ampère equation 
\begin{equation}
\label{MAep}
(\chi- \beta \widetilde \theta+dd^c \wh\vp_\beta)^n=\frac{e^{\wh\vp_\beta} dV}{\prod_\lambda |s_\lambda|_{h_\lambda}^{2(1-\beta a_\lambda)}}
\end{equation}
has a unique solution $\wh\vp_{\beta}\in L^{\infty}(\oX)\cap \mathrm{PSH}(\oX,\chi-\beta \widetilde \theta)$ by \cite{Kolo}. Moreover, it is well-known that 
$$\omkh:=\chi-\beta \wt\theta+dd^c \wh\vp_\beta$$ 
is smooth outside $D$, has conic singularities along each $D_\lambda$ with cone angle $2\pi \beta a_\lambda$ (say if $\oX$ is smooth, otherwise this will be true only after a finite cover) and one has $\Ric \omkh= -\omkh$ on $X=\oX\setminus D$, cf e.g. \cite{GP}. In particular, we have as currents
\begin{equation}
\label{KE current}
\Ric \omkh=-\omkh+ \sum_{\lambda=1}^N (1-\beta a_\lambda) [D_\lambda]. 
\end{equation}


\subsection{Main result}
The aim of this section is to prove the following result. 

\begin{thm}
\label{weak convergence}
The conic Kähler-Einstein metrics $\omkh$  solution of \eqref{KE current} converge to $\omke$ when $\beta \to 0$, both weakly as currents on $\oX$ and locally smoothly on $X$. 
\end{thm}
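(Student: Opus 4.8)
The plan is to run a two-pronged argument: first establish weak convergence of currents via pluripotential theory (comparison principle and uniform potential estimates), then upgrade to $C^\infty_{\mathrm{loc}}$ convergence on $X$ using the Chern–Lu inequality. The two statements are logically linked: the weak convergence on $\oX$ provides the $C^0$ control on the potentials that feeds into the Chern–Lu estimate.

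For the weak convergence, I would work at the level of the normalized potentials. Write $\omke=\chi+dd^c\wh\vp$ with $\wh\vp$ solving \eqref{KE 3}, and $\omkh=\chi-\beta\wt\theta+dd^c\wh\vp_\beta$ with $\wh\vp_\beta$ solving \eqref{MAep}. The first step is to rewrite \eqref{MAep} as a Monge–Ampère equation against the fixed background $\chi$: setting $\psi_\beta:=\wh\vp_\beta-\beta\sum_\lambda a_\lambda\log|s_\lambda|_{h_\lambda}^2$ (up to sign conventions), one gets $(\chi+dd^c\psi_\beta)^n = e^{\wh\vp_\beta}\,dV/|s|^2$, which is the same right-hand side as \eqref{KE 3} but with $\wh\vp$ replaced by $\wh\vp_\beta$ and an extra $\beta$-dependent term hidden in $\psi_\beta$ versus $\wh\vp_\beta$. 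The key analytic input is a uniform-in-$\beta$ estimate: I expect $\wh\vp_\beta$ to satisfy the same two-sided bound \eqref{ineq} as $\wh\vp$, namely $C_1\ge\wh\vp_\beta\ge -(n+1+\ep)\log(-\log|s|)-C_\ep$ with constants independent of $\beta$ for $\beta$ small; this should follow by adapting the arguments of \cite{DGG} (or \cite{Kolo}) since the right-hand side densities degenerate no worse uniformly in $\beta$. Granting this, the comparison principle applied to \eqref{KE 3} and \eqref{MAep} — after absorbing the $\beta\wt\theta$ terms — yields an estimate of the form $|\wh\vp_\beta-\wh\vp|\le C\beta\,|\log|s||$ (or $C\beta\log(-\log|s|)$) on $\oX\setminus D$, which forces $\wh\vp_\beta\to\wh\vp$ in $L^1(\oX)$ and hence $\omkh\to\omke$ weakly as currents. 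One must also check that the cohomology classes converge, i.e. $c_1(K_{\oX}+\sum(1-\beta a_\lambda)D_\lambda)\to c_1(K_{\oX}+D)$, which is immediate, and that the $\beta\wt\theta$ contribution to the form vanishes weakly, which is clear.

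For the local smooth convergence on $X$, I would fix a relatively compact open set $U\Subset X=\oX\setminus D$ and apply the Chern–Lu formula to the identity map $(U,\omkh)\to(U,\omke)$ (or a slightly larger pair). Since $\Ric\omkh=-\omkh$ on $X$ and $\omke$ has bounded bisectional curvature on $\overline U$, the standard computation gives a differential inequality $\Delta_{\omkh}\log\tr_{\omkh}\omke \ge -A\tr_{\omkh}\omke - B$ for constants depending only on $\overline U$ (and a lower bound for the bisectional curvature of $\omke$), \emph{not} on $\beta$. Combining with $\Delta_{\omkh}\wh\vp_\beta = n - \tr_{\omkh}(\chi-\beta\wt\theta)$ and the uniform $C^0$ bound on $\wh\vp_\beta$ just obtained, the quantity $\log\tr_{\omkh}\omke - (A+1)\wh\vp_\beta$ satisfies a maximum principle on $U$ (after an appropriate cutoff, since $U$ is not compact — one uses that $\wh\vp_\beta$ is bounded below near $\partial U$ or works on an exhaustion and uses the already-known $L^1$ convergence to rule out escape of the max). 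This produces a two-sided bound $C^{-1}\omke \le \omkh \le C\,\omke$ on any $U'\Subset U$, uniform in $\beta$. Then \eqref{KE current} on $X$ shows $\omkh$ solves a Monge–Ampère equation with uniformly bounded, uniformly elliptic coefficients, so Evans–Krylov and Schauder bootstrap give uniform $C^k_{\mathrm{loc}}(X)$ bounds for all $k$. By Arzelà–Ascoli every subsequence has a $C^\infty_{\mathrm{loc}}$-convergent sub-subsequence, and the limit is a Kähler–Einstein metric with $\Ric=-1$ in the class $c_1(K_{\oX}+D)$ restricted to $X$ that agrees with the current limit $\omke$; by uniqueness of $\omke$ the full family converges.

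The main obstacle is the uniform-in-$\beta$ lower bound on $\wh\vp_\beta$ and the resulting uniform potential comparison $|\wh\vp_\beta-\wh\vp|=O(\beta\,|\log|s||)$: one must verify that the pluripotential estimates of \cite{DGG,Kolo} are genuinely uniform as the cone angles $2\pi\beta a_\lambda\to 0$ and the divisor coefficients $1-\beta a_\lambda\to 1$, i.e. that no constant blows up in this degeneration. A secondary subtlety is the non-compactness of $X$ in the Chern–Lu step: one needs the barrier/cutoff argument to confine the maximum, which is where the already-established weak convergence (or the explicit model \eqref{model} for $\omke$ near $D$) is used to control the competing terms near $D$.
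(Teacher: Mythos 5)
Your two-pronged strategy (pluripotential methods for weak convergence, Chern--Lu for local smooth convergence) matches the paper's overall plan, but the weak-convergence step is organized quite differently and has a genuine gap, and you omit one case the paper has to handle.

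For weak convergence, you propose to first establish a uniform-in-$\beta$ lower bound on $\wh\vp_\beta$ by ``adapting the arguments of [DGG]'', and then to get a two-sided quantitative estimate $|\wh\vp_\beta-\wh\vp|\lesssim\beta\,|\log|s||$ from the comparison principle ``after absorbing the $\beta\wt\theta$ terms''. This is where the gap is. The absorption trick gives $\psi_\beta:=\wh\vp_\beta+\beta\sum a_\lambda\log|s_\lambda|^2_{h_\lambda}$ solving $(\chi+dd^c\psi_\beta)^n=e^{\psi_\beta}dV/|s|^2$ --- the \emph{same} equation as $\wh\vp$ --- but $\psi_\beta$ lives in a class of $\chi$-psh functions with \emph{strictly smaller} non-pluripolar Monge--Amp\`ere mass, so the comparison/uniqueness machinery cannot be invoked symmetrically and a two-sided bound does not drop out. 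The paper avoids both the need to re-derive a uniform DGG-type estimate and the two-sided comparison: it introduces the normalized potentials $\wh\psi_\beta=\frac{1}{1-\beta}(\wh\vp_\beta+\tau_\beta)$ and uses the comparison principle \emph{between different values of $\beta$} to prove these are \emph{decreasing} in $\beta$; a single one-sided comparison with $\wh\vp$ then pins the limit from below, and Bedford--Taylor plus uniqueness in the finite-energy class identifies the limit with $\wh\vp$. Monotonicity replaces your quantitative estimate and sidesteps the mass-mismatch issue entirely. If you insist on your route, the lower bound $\wh\vp_\beta\ge(1-\beta)\wh\vp-O(\beta)$ is in fact obtainable from a single application of the comparison principle (this is the paper's \eqref{zero}), so you do not need to re-run [DGG] --- but you would still lack the upper bound on $\wh\vp_\beta-\wh\vp$ and hence $L^1$ convergence would not follow directly.

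For smooth convergence on $X$, your Chern--Lu plan is essentially the paper's, with two differences. First, you compare to $(X,\omke)$ while the paper compares to $(X,\chi)$ where $\chi=\pi^*\om_{\oXm}$ is a fixed smooth semi-positive form on the compact $\oX$; this makes the ``bounded bisectional curvature above'' hypothesis transparent and independent of any discussion of the hyperbolic metric near $D$. Second, and more importantly, you are vague about why the maximum of the Chern--Lu quantity cannot escape to $D$ (``an appropriate cutoff... or works on an exhaustion''). The paper makes this precise by adding the explicit barrier $\delta\log|\wt s|^2$ and using the lower bound on $\wh\vp_\beta$ to guarantee the modified quantity tends to $-\infty$ at $D$, so the maximum is interior; you should be explicit about this, since it is where the $C^0$ control on $\wh\vp_\beta$ from Step~1 is actually consumed.

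Finally, Theorem~\ref{weak convergence} is stated for an arbitrary torsion-free lattice $\Gamma$, so $\oX$ may be an orbifold with finite quotient singularities. The paper devotes a third step to reducing to the neat case by passing to a finite \'etale cover $\overline Y\to\oX$ and pulling back the cone metrics (with cone angles rescaled by the ramification indices). Your proposal implicitly assumes $\oX$ is smooth; you should either state that assumption or include the covering argument.
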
 

\begin{proof}
We divide the proof in three steps. In the first two steps, we assume that $\Gamma$ is neat so that $\oX$ is a smooth manifold. In the last step, we will explain how to work with the finite quotient singularities that $\oX$ has in general. \\

\noindent
\textbf{Step 1. Weak convergence.}

\noindent
Let $\beta\in [0,\frac 12]$ and let $\tau_\beta\in (0,1]$ be a number to be determined later. We set 
$$\wh\psi_{\beta}:=\frac{1}{1-\beta}\cdot ( \wh\vp_\beta+\tau_\beta);$$
this is a $\frac{1}{1-\beta}\cdot (\chi-\beta \wt \theta)$-psh function with finite energy (even bounded if $\beta>0$) satisfying the Monge-Ampère equation
\begin{equation}
\label{MA}
\left(\frac{1}{1-\beta}\cdot (\chi-\beta \wt \theta)+dd^c \wh\psi_\beta\right)^n = e^{(1-\beta)\wh\psi_\beta+F_\beta} \cdot \frac{dV}{|s|^2_h}
\end{equation}
where $F_\beta=\beta \sum_\lambda  a_\lambda \log |s_\lambda|^2_{h_\lambda}-n\log(1-\beta)-\tau_\beta$.
 We claim that for any $\beta'>\beta$ small enough, one has
\begin{equation}
\label{psh}
\frac{1}{1-\beta'}\cdot (\chi-\beta' \wt \theta)+dd^c\wh \psi_\beta \ge 0.
\end{equation}
This follows from the identity
\begin{equation}
\label{comparaison psh}
\frac{1-\beta}{1-\beta'}\cdot (\chi-\beta' \wt \theta) =  (\chi-\beta \wt \theta) +\frac{\beta'-\beta}{1-\beta'}\cdot \underbrace{(\chi - \wt \theta)}_{>0}.
\end{equation}
More precisely, we get
\begin{align*}
\left(\frac{1}{1-\beta'}\cdot (\chi-\beta' \wt \theta)+dd^c\wh \psi_\beta \right)^n  & \ge (1-\beta)^{-n} \left(\frac{1-\beta}{1-\beta'}\cdot (\chi-\beta' \wt \theta)+dd^c\wh \vp_\beta\right)^n \\
& \ge (1-\beta)^{-n}(\chi - \beta \wt \theta +dd^c \wh\vp_\beta)^n\\
&=e^{(1-\beta') \wh\psi_\beta+F_{\beta'}+H_{\beta',\beta}}\cdot \frac{dV}{|s|^2_h}
\end{align*}
where $H_{\beta',\beta}=(\beta'-\beta)(\wh\psi_\beta-\log |s|^2)-n \log\left( \frac{1-\beta}{1-\beta'}\right)-\tau_\beta+\tau_{\beta'}$. 

If we first choose $\beta=0$, $\tau_{\beta'}= C\beta'-n\log(1-\beta')$ where $C>0$ is a constant such that $\vp \ge \log |s|^2_h-C$, whose existence is guaranteed by \eqref{ineq}, then we see that $H_{\beta',0}\ge 0$ so that $\wh\vp=\wh\psi_{0}$ is a subsolution of \eqref{MA}, hence the comparison principle yields
\begin{equation}
\label{zero}
\frac{1}{1-\beta}\cdot \wh\vp_\beta +\frac{C\beta-n\log(1-\beta)}{1-\beta} \ge\wh \vp 
\end{equation}
Using the inequality above, we conclude that there is a constant $C'>0$ such that $\wh\psi_\beta\ge \log |s|^2-C'$ for any $\beta$. Then we set $\tau_{\beta}:=C'\beta-n\log(1-\beta)$ and it follows that $H_{\beta',\beta} \ge 0$. In other words, $\wh\psi_{\beta}$ is a subsolution of the Monge-Ampère equation satisfied by $\hat \psi_{\beta'}$, hence 
\begin{equation}
\label{un}
\wh\psi_{\beta'} \ge \wh\psi_{\beta}.
\end{equation}
The family $(\wh\psi_{\beta})_{\beta>0}$ is a decreasing family of quasi-psh functions with complex Hessian uniformly bounded from below. It follows that they converge when $\beta$ approaches zero to a $\chi$-psh function $\wt \vp$. It follows from \eqref{zero} that $\wt \varphi$ has finite energy, is locally bounded on $X$ has satisfies 
$$(\chi+dd^c \wt \vp)^n=\frac{e^{\wt \vp}dV}{|s|^2}$$
on $X$ by Bedford-Taylor theory, hence also globally on $\oX$. By uniqueness of such a solution (cf e.g. \cite[Prop.~4.1]{BG}), we get $\widetilde \vp =\wh \vp$, which proves the first part of the proposition. \\

\noindent
\textbf{Step 2. Smooth convergence locally on $X$.}

\noindent
We apply Chern-Lu inequality to the identity map from $(X,\omkh)$ to $(X,\chi)$, cf e.g. \cite[Proposition~7.1]{Rub14}. As $\Ric \, \omkh = -\omkh$ and the bisectional curvature of $(X,\chi)$ is bounded from above, there is a constant $A>0$ such that
\begin{equation}
\label{CL}
\Delta_{\omkh} \log \tr_{\omkh} \chi \ge -A(1+\tr_{\omkh} \chi).
\end{equation}
Next, we have 
\begin{align*}
\Delta_{\omkh}(-\wh\vp_\beta) &= \tr_{\omkh}\chi - n -\ep \tr_{\omkh}\wt \theta \\
& = \tr_{\omkh}\chi - n +\ep [\tr_{\omkh}(\chi-\wt \theta)-\tr_{\omkh}\chi] \\
&\ge (1-\beta)\tr_{\omkh}\chi - n 
\end{align*}
and, if $|\wt s|^2:=\prod |s_\lambda|_{h_\lambda}^{2a_\lambda}$, 
\begin{align*}
\Delta_{\omkh}\log |\wt s|^2 &= \tr_{\omkh}(-\wt \theta) \\
& =\tr_{\omkh}(\chi-\wt \theta)-\tr_{\omkh}\chi \\
&\ge-\tr_{\omkh}\chi.
\end{align*}
Fix some number $\delta\in (0,\frac 14)$; it follows from the previous inequalities that the following holds on $X$
$$\Delta_{\omkh} \big[\log \tr_{\omkh} \chi-(A+1)\wh \vp_\beta +\delta \log |\wt s|^2 \big]\ge (1-(A+1)\beta-\delta)\tr_{\omkh} \chi-B $$
where $B=(n+1)A+n$. Up to decreasing $\beta$, one can assume without loss of generality that $(A+1)\beta \le 1/2$ so that $(1-(A+1)\beta-\delta) \ge \frac 14$ . Set $H_{\beta, \delta}:=\log \tr_{\omkh} \chi-(A+1) \wh\vp_\beta +\delta \log |\wt s|^2 $; it is a smooth function on $X$ which tends to $-\infty$ near $D$ thanks to \eqref{ineq}-\eqref{zero}. At its maximum $x_{\beta, \delta}\in X\setminus D$, one has $\tr_{\omkh} \chi(x_{\beta, \delta})\le 4B$. Therefore, one has, for any $x\in X$ 
\begin{align*}
\log \tr_{\omkh} \chi (x) &= H_{\beta, \delta}(x)+(A+1)\wh\vp_\beta(x)-\delta \log |\wt s|^2(x) \\
&\le H_{\beta, \delta}(x_{\delta,\beta})+(A+1)\wh\vp_\beta(x)-\delta \log |\wt s|^2(x) \\
& \le \log 4B+ [\delta \log |\wt s|^2 -(A+1)\wh \vp_\beta](x_{\beta, \delta})+(A+1)\wh\vp_\beta(x)-\delta \log |\wt s|^2(x)\\
& \le C_{\delta}- \delta \log |\wt s|^2(x)
\end{align*}
where we used the fact that $\wh\vp_{\beta}$ is uniformly bounded above (e.g. \eqref{un}) and $\wh\vp_\beta \ge -(n+2) \log(-\log |s|) +O(1)$ by \eqref{zero}. As a result, we get 
$$\chi \le \frac{C_{\delta}}{|\wt s|^{2\delta}} \cdot \omkh$$ 
uniformly on $X$, for any $\delta \in (0,1/4)$. From the Monge-Ampère equation satisfied by $\omkh$ and the fact that $\chi$ is a smooth Kähler form on $X$, we deduce that given any compact subset $V\Subset X$, there is a constant $C_V$ independent of $\beta$ such that 
$$\sup_V |\Delta_{\chi} \wh\vp_{\beta} | \le C_V.$$ 
Using standard bootstrapping arguments, we get uniform bounds on the higher derivatives of $\vp_\beta$ on compact subsets of $X$, which ends the proof of the theorem in the case where $\Gamma$ is neat.\\

\noindent
\textbf{Step 3. General case when $\Gamma$ is not neat.}

\noindent
Let $\Gamma'<\Gamma$ be a neat, finite index sub-lattice. The quotient $Y=\quotient{\Gamma'}{\Omega}$ admits a smooth toroidal compactification $(\overline Y,D')$, and let $f:\overline Y \to \overline X$ be the associated finite cover. Set $m_\lambda$ to be the ramification order of $f$ along $D_\lambda$, and set $D_\lambda'=f^{-1}(D_\lambda)$. In summary, one has \[K_{\overline Y}+\sum_\lambda (1-\beta a_\lambda m_\lambda)D_\lambda'=f^*(K_{\oX}+\sum_\lambda (1-\beta a_\lambda)D_\lambda).\] 
The Kähler-Einstein metrics $\wh \omega_\beta':=f^*\wh \omega_\beta$ have cone singularities along $D'$ with cone angle $2\pi(1-\beta a_\lambda m_\lambda)$ along $D'_\lambda$. That is, they satisfy $\Ric \wh \omega_\beta'=-\wh \omega_\beta'+\sum_\lambda(1-\beta a_\lambda m_\lambda)[D_\lambda']$. Thanks to Steps 1-2 above, $\wh \omega_\beta'$ converge globally weakly on $\overline Y$ and locally smoothly on $Y$ towards the hyperbolic metric $Y$. Of course, one needs to perform a harmless adjustment by replacing $\chi$ with $f^*\chi$.  Since the hyperbolic metric on $Y$ is nothing but $f^*\omke$, cf. \textsection~\ref{subsec KE}, the theorem follows immediately. 
\end{proof}

\section{The Calabi ansatz}
\label{sec:calabi-metric}

We now construct some explicit model Kähler metrics in the total space of a holomorphic line bundle $L$ over $D$. This technique goes back to Calabi \cite{Cal79}. \\

\noindent
{\bf Model Setup.} Let $D$ be a compact Kähler manifold equipped with a Kähler form $\theta_D$ and let $(L,h)$ be a Hermitian holomorphic line bundle over $D$. We make the following assumptions: 
\begin{enumerate}[label=(\roman*)]
\item $\Ric(\theta_D)=0$.
\item   \label{eq:1}  $ i\Theta(L,h)= \sigma \theta_D, \qquad \sigma=\pm 1$.
\end{enumerate}

We think of $L$ as the normal bundle of $D$ which will be a divisor in a compact complex manifold $X$. The case $\sigma=-1$ corresponds to a quotient of a ball: in that case $D$ will be a torus. The case $\sigma=1$ corresponds to that of an anticanonical divisor in a Fano manifold $X$.

We consider the function $t=\log \|v\|_h^2$ defined on $L\setminus D$, the complement of the zero section in the total space of $L$. We also have on $L\setminus D$ a connection 1-form $\eta$ which coincides on each fibre of $L$ with the angular form $d\theta$, and satisfies 
\[d\eta=-i p^*\Theta(L,h)=-\sigma p^*\theta_D,\] where $p$ is the projection $p:L\rightarrow D$. Then $\xi=\frac12 dt+i\eta$ is a (1,0)-form on $L\setminus D$, coinciding with $\frac{dz}z$ in each fibre. In particular, $dt\wedge \eta = i \xi \wedge \bar \xi$ coincides with $\frac{idz\wedge d\bar z}{|z|^2}$ in each fiber. In the following, one will identify $p^*\theta_D$ with $\theta_D$ and view the latter as a $(1,1)$-form on the total space $L$. \\

We are looking for a Kähler metric $\omega= i\partial \bar \partial \varphi$ on $L\setminus D$ whose Kähler potential $\varphi=\varphi(t)$ only depends on $t$ and such that 
\begin{equation}
\label{KE equation}
\Ric(\om)=\sigma \omega.
\end{equation}

One can compute the coefficients of the metric in the frame introduced above as follows. First, $d^c\varphi=\frac{1}{i}(\partial-\bar \partial)\varphi = 2\varphi'(t) \eta$ and, then the associated Kähler form is
\begin{equation}
 \omega = i \partial \dbar \varphi =  \frac 12 d d^c \varphi = \varphi'' dt\wedge \eta - \sigma \varphi' \theta_D .\label{eq:3}
\end{equation}
In particular, we have as necessary conditions
\[\varphi''>0 \quad \mbox{and} \quad  -\sigma \varphi'>0.\]
Let $\varpi$ be a (maybe local) parallel $(n-1,0)$ form on $D$, then $\Omega=\xi\wedge\varpi$ satisfies $d\Omega=id\eta\wedge\varpi=0$, so $\Omega$ is holomorphic. Moreover, up to some positive constant, $|\Omega|^{-2} = (-\sigma\varphi')^{n-1}\varphi''$. As $\Ric \omega= i \partial \dbar \log |\Omega|^2$, in order for $\omega$ to be a solution of \eqref{KE equation}, it is enough to see that $\varphi$ is a solution of the following equation 
\begin{equation}
  \label{eq:2}
  (-\sigma\varphi')^{n-1}\varphi'' = c e^{-\sigma\varphi}
\end{equation}
for some constant $c>0$. This can be integrated into
\begin{equation}
(-\sigma\varphi')^{n+1}=a - b \sigma e^{-\sigma\varphi}\label{eq:4}
\end{equation}
for constants $a\in\Bbb{R}$ and $b=(n+1)c>0$.

As we shall see, the solutions $\varphi(t)$ will be defined on intervals of the form $(-\infty,t_0)$ for some arbitrary constant $t_0\in \mathbb R$, and they will satisfy $\sigma\varphi(t) \rightarrow \infty$ when $t\rightarrow -\infty$, that is when we go to the divisor $D$. It follows that $a\geq 0$ and actually $\varphi(t) \sim -\sigma a^{\frac1{n+1}}t$ when $t\rightarrow -\infty$, which says that $\omega$ extends over $D$ with $\omega|_D= a^{\frac1{n+1}} \theta_D$. Coming back to the equation (\ref{eq:4}) we obtain the first terms of the expansion of $\varphi$ when $t\rightarrow -\infty$, for some constant $\varphi_0$:
\begin{equation}
  \label{eq:5}
  \varphi(t) \sim -\sigma a^{\frac1{n+1}}t + \varphi_0 + \frac b{(n+1)a } e^{-\sigma\varphi_0+a^{\frac1{n+1}}t} + \cdots
\end{equation}
which shows that $\omega$ has actually a conical singularity around $D$ with angle $2\pi a^{\frac1{n+1}}$, so the angle goes to zero when $a\rightarrow 0$, this is the limit we want to study. 

Observe that if we have a solution $\varphi_1(t)$ of equation (\ref{eq:4}) with $a=b=1$, then $\varphi_1(\beta t)+\varphi_0$ is still a solution with $a=\beta^{n+1}$ and $b=\beta^{n+1}e^{\sigma \varphi_0}$. We use this remark to produce our model families $(\varphi_\beta(t))$ with angle $2\pi \beta$ degenerating to zero: \\

\noindent
{\bf Negative case. }

\noindent
{\it (i) The potential $\varphi_1$. }

\noindent
This is when $\sigma=-1$. The function $\varphi_1$ satisfies $(\varphi_1')^{n+1}=1+e^{\varphi_1}$ so we can take as solution $\varphi_1(t)=F_-^{-1}(t): (-\infty, 0)\to \mathbb R$ with $F_-:\mathbb R\to (-\infty, 0)$ defined by
  \[ F_-(x) = - \int_x^{+\infty} \frac{dx}{(1+e^x)^{\frac1{n+1}}}. \]
One can check that the precise behavior of $\varphi_1$ at $t=-\infty$ is given by
  \begin{equation}
  \label{t - infty}
  \varphi_1(t)=t+I_n+\frac{e^{I_n}}{n+1}\cdot e^t+O(e^{2t}) \quad \text{when }  t\to-\infty,
  \end{equation}
  where the constant $I_n$ is defined by
\begin{equation}
\label{intI}
I_n:=\int_0^{+\infty}\frac{du}{(e^u+1)^{\frac 1{n+1}}}-\int_{-\infty}^0\frac{(e^u+1)^{\frac 1{n+1}}-1}{(e^u+1)^{\frac 1{n+1}}}du
\end{equation}
while at $t=0^-$, one has
{\small
  \begin{equation}
  \label{t 0}
  \varphi_1(t)=-(n+1)\log\left(\frac{-t}{n+1}\right) +\frac{1}{n+2}\cdot \left( \frac{-t}{n+1}\right)^{n+1}+O(t^{2(n+1)})\quad \text{when }  t\to0^-.
  \end{equation}}

\noindent
{\it (ii) Degeneration. }
  
  \noindent
  We choose to fix $b=1$ by taking
  \begin{equation}
    \label{eq:6}
    \varphi_\beta(t) = \varphi_1(\beta t) + (n+1)\log \beta .
  \end{equation}
When $\beta\rightarrow0$, \eqref{t 0} implies that $\varphi_\beta(t) \rightarrow -(n+1) \log(\frac{-t}{n+1}) $ which is the Kähler potential of the hyperbolic cusp.\\

\noindent
{\bf Positive case. }

\noindent
{\it (i) The potential $\varphi_1$. }

\noindent
 In the positive case, $\sigma=1$.  The function $\varphi_1$ satisfies $(-\varphi_1')^{n+1}=1-e^{-\varphi_1}$ and we take $\varphi_1(t)=F_+^{-1}(-t):(-\infty, 0)\to \mathbb R_+$ with $F_+:\Bbb{R}_+\to \mathbb R_+$ by
  \[ F_+(x) = \int_0^x \frac{dx}{(1-e^{-x})^{\frac1{n+1}}}. \]

  \noindent
  Again, one can obtain the precise behavior of $\varphi_1$ at $t=-\infty$ as
  \begin{equation}
  \label{t - infty 2}
  \varphi_1(t)=-t-J_n+\frac{e^{J_n}}{n+1}\cdot e^t+O(e^{2t}) \quad \text{when }  t\to-\infty,
  \end{equation}
  where the constant $J_n$ is defined by
\begin{equation*}
\label{intI 2}
I_n:=\int_0^{+\infty} \frac{1-(1-e^{-u})^{\frac 1{n+1}}}{(1-e^{-u})^{\frac 1{n+1}}}du
\end{equation*}
while at $t=0^-$, one has
{\small
  \begin{equation}
  \label{t 0 2}
  \varphi_1(t)=c_n(-t)^{1+\frac 1n}(1+O((-t)^{1+\frac{1}{n}})\quad \text{when }  t\to0^-,
  \end{equation}}
where $c_n=(\frac{n}{n+1})^{\frac{n+1}{n}}$.\\

\noindent
{\it (ii) Degeneration. }
  
  \noindent
  We now choose the degeneration
  \begin{equation}
    \label{eq:7}
    \varphi_\beta(t) = \varphi_1(\beta t).
  \end{equation}
  Here the limit when $\beta\rightarrow0$ is just $0$. More precisely, the  asymptotics \eqref{t 0 2} imply that when $\beta\rightarrow0$ one has  \begin{equation}
    \label{eq:8}
    \varphi_\beta(t) \sim \left(\frac {-n\beta t}{n+1}\right)^{1+\frac 1n} .
  \end{equation}
Therefore the rescaling $\beta^{-1-\frac1n}\varphi_\beta$ converges to $ (-\frac{nt}{n+1})^{1+\frac1n}$ which is the Kähler potential of a Ricci flat metric on $L\setminus D$, the Tian-Yau metric: it gives the asymptotic behaviour of the Tian-Yau metric of $X\setminus D$ near $D$.\\

We have seen a limit of $\varphi_\beta(t)$ when $\beta\rightarrow 0$ on each compact set in $t$. To understand the global geometry of our models, we write from (\ref{eq:3})
\begin{equation}
  \label{eq:9}
  \omega_\beta = i \partial \dbar \varphi_\beta = \beta^2\varphi_1''(\beta t) dt\wedge\eta - \beta \sigma \varphi_1'(\beta t) \theta_D.
\end{equation}
The geometry is clear when one writes the associated Riemannian metric $g_\beta$, after the change of variable $u=\beta t\in (-\infty, 0)$:
\begin{equation}
  \label{eq:10}
  g_\beta = 2\varphi_1''(u) \big( \tfrac14 du^2 + \beta^2\eta^2 \big) - \beta \sigma \varphi_1'(u) g_D .
\end{equation}
The geometry collapses at speed $\sqrt \beta$ in the directions of $D$, and $\beta$ in the circle directions. Observe that, up to a multiplicative constant, we have when $u\rightarrow 0$ the asymptotics $\varphi_1''(u) \sim u^{-2}$ in the negative case, and $u^{-1+\frac1n}$ in the positive case, from which it follows that the diameter is infinite in the negative case and bounded in the positive case (as it should by Myers's theorem). More precisely, we have at $u=0$ the following asymptotics
\begin{equation}
  \label{eq:11bis}
  \varphi_1'(u) =-c_n'(-u)^{\frac1n}+O((-u)^{1+\frac 2n}),  \quad  \varphi_1''(u) =\frac {c_n'}{n}(-u)^{-1+\frac1n}+O((-u)^{\frac 2n}).
\end{equation}
with $c_n'=\big(\frac{n}{n+1}\big)^{\frac1n}$.

To see the behaviour of the metric near the divisor $D$, we consider the expansion at $u=-\infty$ given as in (\ref{eq:5}) by
\begin{equation}
  \label{eq:11}
  \varphi_1'(u) = -\sigma + \frac 1{n+1} e^u + O(e^{2u})
\end{equation}
from which follows, taking $r=e^{\frac u2}\in (0,1)$,
\begin{align}
  \label{eq:12}
  g_\beta & =\tfrac2{n+1}e^u \big( \tfrac14 du^2 + \beta^2\eta^2 \big) + \beta g_D  + O(e^u) \\
  & =  \tfrac2{n+1}\big( dr^2 + \beta^2r^2 \eta^2 \big) + \beta g_D + O(r^2) ,\label{eq:13}
\end{align}
where the $O(r^2)$ is with respect to $g_\beta$ and is uniform with respect to $\beta$.

\section{Curvature calculations}
\label{sec:curv-calc}

We now calculate the curvature of our model metrics $\omega_\beta$ on $L\setminus D$. This gives a control of the local geometry and will be also used in the Laplacian estimate.

The Kähler metric $\omega=i\partial\bar\partial \varphi$ induces a hermitian metric $h$ on the bundle $\Omega^{1,0}$ of $(1,0)$-forms on $L\setminus D$, and it will be convenient to compute its Chern curvature tensor $F^{\Omega^{1,0}}$. At this point, we take an arbitrary Kähler potential $\varphi=\varphi(t)$. We will use the $\mathcal C^{\infty}$ orthogonal splitting 
\[ \Omega^{1,0} = \Bbb{C}\xi \oplus p^* \Omega^{1,0}_D. \]
From (\ref{eq:3}) we see that the Hermitian metric preserves this decomposition, and is equal to
\[ h =
  \begin{pmatrix}
    (\varphi'')^{-1} & \\ & (-\sigma\varphi')^{-1} h^{\Omega^{1,0}_D}
  \end{pmatrix}. \]
  where $ h^{\Omega^{1,0}_D}$ is the hermitian metric induced on $\Omega^{1,0}_D$ by the Kähler metric $\theta_D$. 
From $d\xi=id\eta=-i\sigma\theta_D$ we deduce the $\dbar$ and $\partial$ operators of $\Omega^{1,0}$ in this splitting:
\[ \dbar = \begin{pmatrix} \dbar & 0 \\ a & \dbar^{\Omega^{1,0}_D} \end{pmatrix}, \qquad
  \partial = \begin{pmatrix} \partial - \partial \log \varphi'' & - \frac{\varphi''}{-\sigma\varphi'} a^*  \\ 0 & \partial^{\Omega^{1,0}_D} - \partial \log(-\sigma\varphi')
    
  \end{pmatrix}
\]
Here $a$ is the $(0,1)$-form with values in $\Hom(\Bbb{C},\Omega^{1,0}_D)=\Omega^{1,0}_D$ defined by $a_X=X\lrcorner d\xi=-i \sigma X  \lrcorner \theta_D$, and $a^*$ its adjoint $a^*_X\alpha=-i\sigma\Lambda(\alpha\wedge(X\lrcorner\theta))$. The familiar form for the curvature is then
\begin{equation}
  F^{\Omega^{1,0}}
=
  \begin{pmatrix}
    \partial \dbar \log \varphi'' - \frac{\varphi''}{-\sigma\varphi'} a^* \wedge a & - \dbar (\frac{\varphi''}{-\sigma\varphi'}a^*) \\ \partial^h a & \partial \dbar \log \varphi' + F^{\Omega^{1,0}_D} - \frac{\varphi''}{-\sigma\varphi'} a \wedge a^*
  \end{pmatrix}\notag \\
  \end{equation}
  so that $iF^{\Omega^{1,0}}$ is given by
{\footnotesize
\begin{equation}
  \begin{pmatrix}
    (\log \varphi'')'' i\xi\wedge \bar \xi -  \sigma(\log \frac{\varphi''}{-\sigma\varphi'})' \theta_D & -i (\frac{\varphi''}{-\sigma\varphi'})' \overline \xi \wedge a^* \\
    i \log(\frac{\varphi''}{-\sigma\varphi'})' \xi \wedge a & \log (-\sigma\varphi')'' i\xi\wedge \bar \xi - \sigma\log (-\sigma\varphi')'  (\theta_D+\Theta_D) +i F^{\Omega^{1,0}_D}  
  \end{pmatrix}\label{eq:14}
\end{equation}}
where the last $\Theta_D$ is the 2-form with values in the endomorphisms of $\Omega^{1,0}_D$ defined by $(\Theta_D)_{X,Y}(\alpha)=-(X\lrcorner\alpha)(Y\lrcorner\theta_D)$ for $X\in T^{1,0}$ and $Y\in T^{0,1}$.

\begin{lem}
\label{lemma curvature}
  One has the following bounds for the curvature of the model Kähler metric $\omega_\beta$ defined in (\ref{eq:9}):
  \begin{itemize}
  \item in the negative case ($\sigma=-1$), if $D$ is flat, then the curvature is bounded;
  \item in the positive case ($\sigma=1$), the curvature is bounded by \[ \mathrm{cst.} \big( \frac1{1-e^{-\varphi_\beta}}+\frac1{\beta(1-e^{-\varphi_\beta})^\frac1{n+1}} \big). \]
  \end{itemize}
\end{lem}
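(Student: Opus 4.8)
The plan is to compute the curvature of $\omega_\beta$ directly from the explicit formula \eqref{eq:14} for $iF^{\Omega^{1,0}}$, specialised to $\varphi=\varphi_\beta$. Since for a Kähler metric the Chern curvature of $\Omega^{1,0}$ is the Riemann tensor up to the usual identifications, it is enough to bound the $g_\beta$‑norm of $iF^{\Omega^{1,0}}$, which is comparable to the maximum of the $g_\beta$‑norms of the blocks of the matrix \eqref{eq:14} read in a $g_\beta$‑orthonormal frame. I would therefore pass to the frame adapted to the splitting $\Omega^{1,0}=\Bbb{C}\xi\oplus p^*\Omega^{1,0}_D$, namely $e_0=(\varphi_\beta'')^{1/2}\xi$ and $e_j=(-\sigma\varphi_\beta')^{1/2}\epsilon_j$ with $(\epsilon_j)$ a local $\theta_D$‑orthonormal frame of $\Omega^{1,0}_D$. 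In this frame each entry of \eqref{eq:14} is a scalar coefficient built from $\varphi_\beta$ and its first three $t$‑derivatives, times one of the fixed $(1,1)$‑forms $i\xi\wedge\bar\xi$, $\theta_D$, $\Theta_D$, $F^{\Omega^{1,0}_D}$, $\xi\wedge a$, $\bar\xi\wedge a^*$. Distributing the two rescaling factors $(\varphi_\beta'')^{\pm1/2}$ and $(-\sigma\varphi_\beta')^{\pm1/2}$ over the legs of these forms — using in particular that the $a^*$ in \eqref{eq:14} is the $\theta_D$‑adjoint of $a$ — one gets $|i\xi\wedge\bar\xi|_{g_\beta}\sim(\varphi_\beta'')^{-1}$, that $\theta_D$, $\Theta_D$, $F^{\Omega^{1,0}_D}$ all have $g_\beta$‑norm $\sim(-\sigma\varphi_\beta')^{-1}$ times bounded factors (dimensional constants for $\theta_D,\Theta_D$; $\sup_D|F^{\Omega^{1,0}_D}|_{\theta_D}<\infty$ for the last), and that the two mixed blocks come out with the same norm $\sim\big|\big(\varphi_\beta''/(-\sigma\varphi_\beta')\big)'\big|/\varphi_\beta''$ (the equality being forced by Hermitian symmetry of the curvature form — a useful consistency check). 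Altogether $|R_{g_\beta}|$ is bounded by a dimensional constant times the maximum of
\[
\frac{|(\log\varphi_\beta'')''|}{\varphi_\beta''},\qquad
\frac{|(\log(-\sigma\varphi_\beta'))''|}{\varphi_\beta''},\qquad
\frac{|(\log(-\sigma\varphi_\beta'))'|}{-\sigma\varphi_\beta'},\qquad
\frac{\big|\big(\tfrac{\varphi_\beta''}{-\sigma\varphi_\beta'}\big)'\big|}{\varphi_\beta''},\qquad
\frac{|F^{\Omega^{1,0}_D}|_{\theta_D}}{-\sigma\varphi_\beta'}.
\]

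The second step is to substitute $u=\beta t$. Since $\varphi_\beta(t)=\varphi_1(\beta t)+\mathrm{cst}$ by \eqref{eq:6}--\eqref{eq:7}, one has $\varphi_\beta^{(k)}(t)=\beta^k\varphi_1^{(k)}(u)$, so in each of the first four quantities above every power of $\beta$ cancels and they become functions of $u\in(-\infty,0)$ depending only on $\varphi_1$, while the fifth equals $|F^{\Omega^{1,0}_D}|_{\theta_D}\big/\big(\beta\,(-\sigma\varphi_1'(u))\big)$. I would then bound these functions of $u$ using the ODE \eqref{eq:2}--\eqref{eq:4} (with $a=b=1$), the resulting identity $(-\sigma\varphi_1')^{n+1}=1-\sigma e^{-\sigma\varphi_1}$, and the endpoint expansions \eqref{t - infty}, \eqref{t 0} in the negative case and \eqref{t - infty 2}, \eqref{t 0 2}, \eqref{eq:11bis} in the positive case. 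In the negative case $D$ is flat, hence $F^{\Omega^{1,0}_D}=0$ and the fifth term vanishes; moreover $-\sigma\varphi_1'=\varphi_1'$ tends to $1$ as $u\to-\infty$ and to $+\infty$ as $u\to0^-$, and a short check at the two ends (where every quantity has a finite limit) shows the first four are bounded on all of $(-\infty,0)$, so $|R_{g_\beta}|$ is bounded, uniformly in $\beta$. In the positive case one checks instead that each of the first four quantities is bounded as $u\to-\infty$ and is $O\big((-u)^{-(n+1)/n}\big)$ as $u\to0^-$; since $(-\sigma\varphi_1'(u))^{n+1}=1-e^{-\varphi_1(u)}=1-e^{-\varphi_\beta}$ is comparable to $1$ near $u=-\infty$ and to $(-u)^{(n+1)/n}$ near $u=0^-$ (by \eqref{eq:11bis}), this is exactly the bound $\lesssim(1-e^{-\varphi_\beta})^{-1}$, while the fifth quantity is $|F^{\Omega^{1,0}_D}|_{\theta_D}\big/\big(\beta(1-e^{-\varphi_\beta})^{1/(n+1)}\big)$; summing yields the stated estimate. (Alternatively, using the diffeomorphism $u\mapsto q:=-\sigma\varphi_1'$, which ranges over $(1,+\infty)$ in the negative case and $(0,1)$ in the positive case, turns all the quantities above into explicit algebraic functions of $q$ and reduces the whole estimate to elementary one‑variable calculus.)

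The \emph{main obstacle} is the bookkeeping in the orthonormal frame: for each leg of each of the six algebraic forms one must decide whether it scales by $(\varphi_\beta'')^{\pm1/2}$ (fibre direction) or by $(-\sigma\varphi_\beta')^{\pm1/2}$ ($D$ direction), and in particular the two mixed blocks $\xi\wedge a$ and $\bar\xi\wedge a^*$ must be handled carefully so that they come out with equal $g_\beta$‑norm, as Hermitian symmetry demands. Everything else is routine: uniformity in $\beta$ is automatic once the first four quantities are rewritten in $u$ (they simply do not depend on $\beta$), the endpoint asymptotics are elementary, and the only genuinely $O(1/\beta)$ contribution is the intrinsic curvature term $|F^{\Omega^{1,0}_D}|_{\theta_D}/(-\sigma\varphi_\beta')$, which is absent when $D$ is flat and otherwise reflects the collapse of the $D$ directions at rate $\sqrt\beta$.
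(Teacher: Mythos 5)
Your proposal is correct and follows essentially the same strategy as the paper: work in the $g_\beta$-orthonormal frame adapted to the splitting $\Omega^{1,0}=\Bbb{C}\xi\oplus p^*\Omega^{1,0}_D$, reduce the curvature bound to a list of four $\beta$-independent ratios of derivatives of $\varphi_1(u)$ together with the $\beta^{-1}$-weighted term coming from $F^{\Omega^{1,0}_D}$, and then exploit the first integral $(-\sigma\varphi_1')^{n+1}=1-\sigma e^{-\sigma\varphi_1}$. The only cosmetic difference is that the paper substitutes $x=e^{-\sigma\varphi_1}$ to write all four ratios as explicit rational functions $\frac{\text{poly}(x)}{(n+1)(1-\sigma x)}$, while you check the two endpoint asymptotics and also suggest the equivalent substitution $q=-\sigma\varphi_1'$; both routes give the same bounds.
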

\begin{proof}
For now, $\omega=i\partial\bar\partial \varphi$ is still arbitrary, and only at the end we will choose $\varphi:=\varphi_\beta$ to be the potential constructed by the Calabi Ansatz above. Let $X=\lambda \xi^*+v$ with $\lambda \in \mathbb C, v\in T^{1,0}_D$ and $Y=\mu \xi+\alpha$ with $\mu \in \mathbb C, \alpha \in \Omega^{1,0}_D$. We assume that $\|\xi\|_{\omega} =\|Y\|_{h}=1$, so that
\begin{equation}
\label{lambda}
|\lambda|^2 \le \frac{1}{\varphi''}, \,\, \|v\|^2_{\theta_D}\le \frac1{-\sigma \varphi'}, \quad \mbox{and} \quad |\mu|^2 \le \varphi'', \,\,\|\alpha\|^2_{h_D}\le -\sigma \varphi'
\end{equation}
where $h_D$ is the metric on $\Omega^{1,0}_D$ induced by $\theta_D$. 
The hermitian matrix $h\cdot iF^{\Omega^{1,0}}_{X,\bar X}$ is equal to 
{\footnotesize
\[
\begin{pmatrix}
|\lambda|^2 \frac{(\log \varphi'')''}{\varphi''}-\frac{\sigma \|v\|^2_{\theta_D}}{\varphi''} \log(\frac{\varphi''}{-\sigma \varphi'})'& -i\bar \lambda a^*_v \frac{1}{-\sigma \varphi'} \log(\frac{\varphi''}{-\sigma\varphi'})'\\ 
i\lambda a_{\bar v} \frac{1}{-\sigma \varphi'} \log(\frac{\varphi''}{-\sigma\varphi'})' &  \frac{\log (-\sigma\varphi')''}{-\sigma \varphi'} |\lambda|^2 +\frac{\log (-\sigma\varphi')'}{ \varphi'}  (\theta_D+\Theta_D)_{v,\bar v} +\frac{i}{-\sigma \varphi'} F^{\Omega^{1,0}_D}_{v,\bar v}  
\end{pmatrix}
\]
}
As a result, the expansion of $\langle iF_{X,\bar X}^{\Omega^{1,0}}Y, Y\rangle_h$ involves the following terms
\[|\lambda\mu|^2\frac{(\log \varphi'')''}{\varphi''}, |\mu|^2 \frac{ \|v\|^2_{\theta_D}}{\varphi''} (\log(\frac{\varphi''}{-\sigma \varphi'})', \lambda\bar \mu \langle a_{\bar v},\alpha\rangle_{h_D}\log(\frac{\varphi''}{-\sigma\varphi'})',\]
\[|\lambda|^2\|\alpha\|_{h_D}^2 \frac{\log (-\sigma\varphi')''}{-\sigma \varphi'} ,  \|\alpha\|_{h_D}^2\frac{\log (-\sigma\varphi')'}{ \varphi'}  (\theta_D+\Theta_D)_{v,\bar v}, \frac{1}{ \varphi'} \langle i F^{\Omega^{1,0}_D}_{v,\bar v}\alpha, \alpha\rangle_{h_D}. \]
 Given the bounds \eqref{lambda},  we see that we need to bound the quantities
  \begin{equation}
   \frac{(\log \varphi'')''}{\varphi''}, \quad \frac{\log (-\sigma\varphi')''}{\varphi''}, \quad \frac{(\log \varphi'')'}{\varphi'}, \quad \frac{\log (-\sigma\varphi')'}{\varphi'},\label{eq:15}
 \end{equation}
  while the term $F^{\Omega^{1,0}_D}$ (present only in the positive case) is bounded by $\frac{\mathrm{cst.}}{-\sigma\varphi'}$.

  Up to an additive constant, we have $\varphi_\beta(t)=\varphi_1(\beta t)$, so we see that the factors $\beta$ in (\ref{eq:15}) cancel and it is enough to bound these quantities for $\varphi_1$, while the term involving the curvature of $D$ is bounded by $\frac{\mathrm{cst.}}{-\beta\varphi_1'(\beta t)}$. We now use the equation satisfied by $\varphi_1$, that is
  \[ -\sigma\varphi_1' = (1-\sigma e^{-\sigma\varphi_1})^{\frac1{n+1}}. \]
  Taking $x=e^{-\sigma\varphi_1}$ we obtain $x'=x(1-\sigma x)^{\frac1{n+1}}$. It is then convenient to write all the quantities in terms of $x$. We have $-\sigma\varphi_1' = (1-\sigma x)^{\frac1{n+1}}$, therefore $-\sigma\varphi_1''=\frac1{n+1} x(1-\sigma x)^{-\frac{n-1}{n+1}}$. Then one calculates all quantities in (\ref{eq:15}):
  \begin{align*}
   \frac{\log (-\sigma\varphi_1')'}{\varphi_1'}&=\frac x{1-\sigma x}, &
\frac{\log (-\sigma\varphi_1')''}{\varphi_1''}&=\frac {n+1-\sigma x}{(n+1)(1-\sigma x)}, \\
\frac{\log (-\sigma\varphi_1'')'}{\varphi_1'}&=\frac{n+1-2\sigma x}{(n+1)(1-\sigma x)}, &
\frac{\log (-\sigma\varphi_1'')''}{\varphi_1''}&=\frac{-n^2+n+2-2\sigma x}{(n+1)(1-\sigma x)}.
\end{align*}
If $\sigma=-1$ then $1-\sigma x=1+e^\varphi\geq 1$ therefore all these quantities are bounded. If $\sigma=1$ then $1-\sigma x=1-e^{-\varphi}$ and we obtain the bounds in the statement of the lemma.
\end{proof}
In the positive case, the divisor $D$ corresponds to $\varphi\rightarrow +\infty$ so the curvature is $O(\beta^{-1})$ in all sets of the form $u<-A<0$. At $u=0$ the metric $g_\beta$ from (\ref{eq:10}) degenerates, but this part will be cut out since we will glue with the rest of $X$.

\section{Asymptotics of the conical KE metrics on ball quotients}
\label{sec:asympt-conic-ke}

\subsection{Set-up}
\label{setup}
In this section, we borrow the setup and notation of \textsection~\ref{setup0} and we assume additionally that $\Omega=\mathbb B^n$ is the complex hyperbolic space of dimension $n$. 
In this section, we assume that  $\Gamma$ is neat,  so that $X=\quotient{\Gamma}{\mathbb B}$ can be compactified smoothly by adding finitely many disjoint tori $D_1, \ldots, D_N$ of dimension $n-1$. In general, this is only true up to the action of a finite group (locally in the neighborhood of each torus). The Kähler-Einstein metric $\omke$ is, up to a normalizing constant, the {\it hyperbolic} metric on $X$, described locally near $D$ by \eqref{model}.

\medskip
It will be important in the following to allow cone angles along $D_\lambda$ that are not necessarily of the form $2\pi \beta a_\lambda $ for some given $a_\lambda>0$ and a \textit{single} parameter $\beta>0$ going to zero. For that reason and from now on, we denote by  $\beta:=(\beta_1, \ldots, \beta_N)$ a $N$-tuple of positive numbers. Since the components $D_1, \ldots, D_N$ of the boundary divisor $D$ are disjoint, the divisor $-\sum_\lambda a_\lambda D_\lambda$ is relatively ample for {\it any} $a_\lambda>0$. 
In particular, up to changing $h_\lambda$ one can find $\delta_0>0$ such that $\chi-\delta\theta_\lambda$ is semi-positive globally on $\overline X$ and Kähler on $X\setminus \sqcup_{\mu \neq \lambda}D_\mu$, for any $\delta \le 2\delta_0N$. 
As a result, $\chi-\sum_{\lambda}a_\lambda \theta_\lambda$ is globally Kähler on $\overline X$ for any $a_\lambda \in (0,2\delta_0]$. In the following, one will assume that $\beta_\lambda \le \delta_0/2N$ for any $\lambda$.\\

The Kähler-Einstein metric $\omkh=\chi- \sum_{\lambda=1}^N\beta_\lambda  \theta_\lambda+dd^c\wh \vp_\beta$ solution of 
\begin{equation}
\label{KE current 2}
\Ric \omkh = -\omkh+ \sum_{\lambda=1}^N(1-\beta_\lambda)[D_\lambda]
\end{equation}
for $\|\beta\|$ small enough solves the following Monge-Ampère equation
\begin{equation}
\label{MAep2}
(\chi-  \sum_{\lambda=1}^N\beta_\lambda  \theta_\lambda+dd^c \wh\vp_\beta)^n=\frac{e^{\wh \vp_\beta} dV}{\prod_\lambda |s_\lambda|_{h_\lambda}^{2(1-\beta_\lambda)}}
\end{equation}
where $dV$ is a smooth volume form such that $-\mathrm{Ric}(dV)+\sum_{\lambda=1}^N \theta_\lambda = \chi$.\\

One can reproduce the arguments in the proof of Theorem~\ref{weak convergence} verbatim to show that $\wh \phi_\beta$ almost decreases to $\wh \phi$ when $\beta$ goes to zero. More precisely, one can find a sequence of real numbers $\tau_\beta\to0$ such that $\frac{1}{1-\delta_0^{-1}\sum_\lambda \beta_\lambda}(\wh \phi_\beta+\tau_\beta)$ decreases to $\wh \phi$ when $\beta \searrow 0$ component-wise. The main point is that if $\beta'>\beta$ component-wise and if we set $B=\sum \beta_\lambda$ (resp. $B'=\sum \beta_\lambda'$), we have
\begin{align*}
\frac{1-\delta_0^{-1}B}{1-\delta_0^{-1}B'}\cdot(\chi-\sum \beta_\lambda'\theta_\lambda) =& (\chi-\sum \beta_\lambda \theta_\lambda)\\
&+\frac{B'-B}{\delta_0-B'}\cdot \Big[\chi-\sum_{\lambda}\underbrace{\big(\delta_0\beta_\lambda+\frac{\delta_0-B}{B'-B}\cdot(\beta_\lambda'-\beta_\lambda)\big)}_{\in (0,2\delta_0)}\cdot \theta_\lambda\Big].
\end{align*}
which replaces the identity \eqref{comparaison psh}. 

Moreover, the Laplacian estimate from the proof of Theorem~\ref{weak convergence} carries over with no significant change, and therefore \begin{equation}
\label{smooth convergence}
\wh \omega_\beta \underset{\beta\to 0}\longrightarrow \omke \quad \mbox{in } \quad \mathcal C^{\infty}_{\rm loc}(U^*)
\end{equation}

\subsection{Comparison to the model metric}
We now aim to compare the global Kähler-Einstein metric $\wh \omega_\beta$ to the model $\omega_\beta$ constructed via the Calabi Ansatz in \textsection~\ref{sec:calabi-metric}. One 

Given any torus $D\in \{D_1, \ldots, D_N\}$, one can identify an open neighborhood $U$ of $D$ in $\oX$ to a neighborhood of the zero section in the total space of the normal bundle  $L:=N_{D/\overline X} \to D$.  Moreover, $L$ comes naturally equipped with a smooth hermitian metric $h$ such that $\theta_D:=\pi \cdot i\Theta(L^{-1}, h^{-1})$ is a flat Kähler metric on $D$. We let $p:U\to D$ be the projection induced by the identification of $U$ to an open subset of the total space of $L$. Under this identification and given a point $(x,v)\in U$ (i.e. $x\in D, v\in L_x$), we can consider the quantity $\|v\|^2_h$ and assume that $\|v\|_h^2<e^{-1}$ on $U$. On $U^*:=U\setminus D$, the smooth function $t=\log \|v\|_h^2:U^*\to (-\infty, -1)$ satisfies 
\begin{equation}
i\d\dbar t = p^*\theta_D.
\end{equation} 
Moreover, the Kähler-Einstein metric $\omke$ on $X$ has an {\it exact} expression in restriction to $U^*$; namely

\begin{align}
\label{hyp} 
 \omke|_{U^*} & = i \d\dbar\big[-(n+1)\log(-t)\big]. \\
\nonumber & = (n+1)\Big[\frac{\xi \wedge \bar \xi}{(-t)^2}+\frac{p^*\theta_D}{-t}\Big]
\end{align}
where $\xi=\frac 12dt+i\eta$ has been defined in \textsection~\ref{sec:calabi-metric}. We have observed in {\it ibid.} that the potential $\varphi(t)=-(n+1)\log(-t) $ of $ \omke$ is the limit of the potentials \[\psi_\beta:=\varphi_\beta(t)+(n+1)\log(n+1)\] of $\omega_\beta$ (cf. \eqref{eq:6}) when $\beta\to 0$ and that the convergence is smooth on the compact subsets of $\overline U \setminus D$. In particular, we get
\begin{equation}
\label{smooth convergence 2}
 \omega_\beta \underset{\beta\to 0}\longrightarrow \omke \quad \mbox{in } \quad \mathcal C^{\infty}_{\rm loc}(U^*).
\end{equation}

\medskip
Let $\Omega=\xi \wedge \varpi$ be the holomorphic $n$-form with logarithmic poles along $D$ constructed on $U$ in the previous section.The Monge-Ampère equation solved by $\psi_\beta$ reads
\[(i\d\dbar \psi_\beta)^n=e^{\psi_\beta}i^{n^2}\Omega\wedge \overline \Omega.\]
The Monge-Ampère equation solved by $\wh \omega_\beta$ has a similar form. Indeed, let $\psi_\chi$ be a smooth potential for $\chi$ on $U$ and let us set $\wh \psi_\beta:=\psi_\chi-\beta t+\wh \varphi_\beta$ which is well-defined on $U^*$. 
Recall that $\wh \varphi_\beta\in L^{\infty}(U^*)$ so that $\wh \psi_\beta-\varphi_\beta$ is globally bounded on $U^*$ (only qualitatively at this point). Moreover, we have 
\[(i\d\dbar \wh \psi_\beta)^n=e^{\wh \psi_\beta+F_\beta}i^{n^2}\Omega\wedge \overline \Omega\]
where $F_\beta$ is a smooth function on $U^*$, globally bounded independently of $\beta$, i.e. $\|F_\beta\|_{L^{\infty}(U^*)}\le C_1$. 

\begin{lem}
The following bound holds 
\begin{equation}
\label{sup bound}
\|\wh\psi_\beta-\psi_\beta\|_{L^{\infty}(U)} \le C_1.
\end{equation}
\end{lem}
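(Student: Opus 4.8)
The plan is to compare the two Monge–Ampère equations
\[(i\d\dbar \psi_\beta)^n=e^{\psi_\beta}i^{n^2}\Omega\wedge\overline\Omega,\qquad (i\d\dbar \wh\psi_\beta)^n=e^{\wh\psi_\beta+F_\beta}i^{n^2}\Omega\wedge\overline\Omega,\]
on $U^*$ by a maximum principle argument applied to the difference $w:=\wh\psi_\beta-\psi_\beta$. First I would record that both $\psi_\beta$ and $\wh\psi_\beta$ are (up to the fixed smooth potential $\psi_\chi$) Kähler potentials of conic metrics on $U^*$ with the \emph{same} cone angle $2\pi\beta$ along $D$, and that the difference $w$ is bounded on $U^*$ (qualitatively, from $\wh\varphi_\beta\in L^\infty$ and the explicit form of $\psi_\beta$), and in fact extends continuously across $D$; so $w$ attains an interior maximum and minimum on the compact set $\overline U$, a priori possibly on $D$ or on $\d U$. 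The boundary $\d U$ is harmless: on $\d U\setminus D$ one already has the smooth convergence \eqref{smooth convergence 2} together with the analogous convergence $\wh\omega_\beta\to\omke$ on $U^*$ from \eqref{smooth convergence}, so $\|w\|_{L^\infty(\d U)}$ is controlled (and one may from the start shrink $U$ and absorb this into $C_1$, or argue on a slightly smaller $U$).

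Next I would run the comparison at an interior extremum. Suppose $w$ attains its maximum at a point $x_0\in U$. If $x_0\notin D$, then at $x_0$ we have $i\d\dbar w\le 0$ (as $(1,1)$-forms), hence $i\d\dbar\wh\psi_\beta\le i\d\dbar\psi_\beta$ there, so taking $n$-th powers of these positive forms (both are genuine Kähler forms at $x_0$ since $x_0\in U^*$) gives
\[e^{\wh\psi_\beta(x_0)+F_\beta(x_0)}i^{n^2}\Omega\wedge\overline\Omega=(i\d\dbar\wh\psi_\beta)^n(x_0)\le (i\d\dbar\psi_\beta)^n(x_0)=e^{\psi_\beta(x_0)}i^{n^2}\Omega\wedge\overline\Omega,\]
whence $w(x_0)=\wh\psi_\beta(x_0)-\psi_\beta(x_0)\le -F_\beta(x_0)\le \|F_\beta\|_{L^\infty(U^*)}\le C_1$; so $w\le C_1$ on $U$. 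Symmetrically, at an interior minimum $x_1\in U^*$ one gets $i\d\dbar w\ge 0$, so $(i\d\dbar\wh\psi_\beta)^n(x_1)\ge(i\d\dbar\psi_\beta)^n(x_1)$, giving $w(x_1)\ge -F_\beta(x_1)\ge -C_1$, so $w\ge -C_1$ on $U$. Combining the two yields exactly $\|\wh\psi_\beta-\psi_\beta\|_{L^\infty(U)}\le C_1$.

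The main obstacle is that the extremum of $w$ could occur on the divisor $D$, where both metrics degenerate, so the naive pointwise Hessian comparison is not directly available. The cleanest fix is a regularization/barrier argument: since both $\psi_\beta$ and $\wh\psi_\beta$ have a conical singularity of the \emph{same} angle $2\pi\beta$ along $D$ (this is the point of matching $\omkh$ with cone angle $2\pi\beta$ against the Calabi model $\omega_\beta$ of angle $2\pi\beta$), the difference $w$ is comparable across $D$ — more precisely, using \eqref{t 0}/the asymptotics from \textsection~\ref{sec:calabi-metric} for $\psi_\beta$ and the standard conic regularity for $\wh\varphi_\beta$, one sees $w$ is Hölder up to $D$ and $i\d\dbar w$ has no positive mass on $D$. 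One then perturbs: for $\delta>0$ small apply the maximum principle to $w_\delta:=w+\delta\,\chi_0(t)$ where $\chi_0(t)\to -\infty$ as $t\to-\infty$ is a comparison function with $(i\d\dbar(\psi_\beta+\delta\chi_0))^n\le (1+o(1))(i\d\dbar\psi_\beta)^n$ near $D$ (e.g. $\chi_0(t)=-(-t)^{\epsilon}$ or $\chi_0=\psi_\beta$ itself times a small factor, exploiting the explicit model), which forces the extremum of $w_\delta$ into $U^*$; run the argument above with a harmless $O(\delta)$ error, then let $\delta\to0$. Alternatively, invoking the uniqueness/comparison principle for conic Monge–Ampère equations (as in \cite{GP}, valid exactly because the two currents have the same cone angle) bypasses the barrier construction entirely. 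I would present the barrier version as the self-contained argument, flagging that the regularity input for $\wh\varphi_\beta$ near $D$ is the standard conic estimate and the one for $\psi_\beta$ is \eqref{t 0}.
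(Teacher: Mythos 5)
Your approach is essentially the same as the paper's: apply the maximum principle to the difference of the two potentials, taking $n$-th powers at an interior extremum to compare the Monge--Amp\`ere densities, and use a barrier term to push the extremum off the divisor $D$. The paper's barrier is simply $\delta t$ (they set $H_{\beta,\delta}=\wh\psi_\beta-\varphi_\beta+\delta t$), which exploits the identity $i\d\dbar t = p^*\theta_D\ge 0$ established in the setup of \textsection~\ref{setup}: at an interior maximum of $H_{\beta,\delta}$ one gets $i\d\dbar\wh\psi_\beta\le i\d\dbar\psi_\beta-\delta\theta_D\le i\d\dbar\psi_\beta$, and the rest is the same comparison you carried out. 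Your proposed barriers $\chi_0(t)=-(-t)^\epsilon$ (or ``$\psi_\beta$ times a small factor'') would also work since they have $i\d\dbar\chi_0\ge 0$ in this geometry, but they are needlessly elaborate compared to the affine choice $t$, and you have the sign reversed when you write $\psi_\beta+\delta\chi_0$ (the perturbed potential you end up comparing $\wh\psi_\beta$ against is $\psi_\beta-\delta\chi_0$, so you need $i\d\dbar\chi_0\ge0$, not a multiplicative control of the Monge--Amp\`ere measure). These are cosmetic, not gaps. One genuine point in your favor: you explicitly flag the possibility that the extremum of $w_\delta$ lands on $\partial U$ and explain why this is harmless using \eqref{smooth convergence}--\eqref{smooth convergence 2}; the paper states that the maximum is attained in $\overline U\setminus D$ and then immediately uses the second-derivative test, tacitly treating it as an interior point, whereas the boundary case does require a word (compare Lemma~\ref{lemma laplacian bound}, where the paper does split the cases $x\in\partial U$ and $x\in U$ explicitly). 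Finally, your closing suggestion to bypass the barrier via the comparison principle for conic Monge--Amp\`ere equations coincides with the paper's Remark following the lemma, which cites \cite[Lemma~3.4]{CKZ}.
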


\begin{proof}
This is a simple application of the maximum principle. Indeed, let $\delta>0$ arbitrarily small and let $H=H_{\beta,\delta}:=\wh\psi_\beta-\varphi_\beta+\delta t$. Since $H_{\beta,\delta}$ goes to $-\infty$ along $D$, its maximum is attained at a point $x=x_{\beta, \delta}\in \overline U\setminus D$ at which the complex Hessian of $H$ is non-positive. In particular, we get $i\d\dbar \wh \psi_\beta\le i\d\dbar \psi_\beta-\delta \theta_D\le i\d\dbar \psi_\beta$ at $x$. Taking the top wedge product and using the Monge-Ampère equations above, we find $H(x)\le -F_\beta(x)+\delta t(x)\le C_1$. In particular, $H\le C_1$ everywhere on $U^*$ and passing to the limit when $\delta\to 0$, we get the first half of \eqref{sup bound}. The other half is obtained in a similar way. 
\end{proof}

\begin{rem}
In the lemma above, we could have use Bedford-Taylor's comparison principle instead of the maximum principle (with the tweak by $\delta t$), see e.g. \cite[Lemma~3.4]{CKZ}.
\end{rem}

Next, we claim that $\wh \omega_\beta$ and $\omega_\beta$ are uniformly quasi-isometric on $U^*$.

\begin{lem}
\label{lemma laplacian bound}
There exists $C_2>0$ independent of $\beta$ such that
\begin{equation}
\label{laplacian bound}
C_2^{-1}\omega_\beta\le \wh \omega_\beta \le C_2 \omega_\beta.
\end{equation}
\end{lem}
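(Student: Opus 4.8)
The plan is to establish the two-sided bound in \eqref{laplacian bound} by a Chern--Lu/Laplacian argument carried out in \emph{both} directions, using the explicit curvature control from Lemma~\ref{lemma curvature} and the $C^0$ comparison \eqref{sup bound}. First I would record the two geometric inputs. On the one hand, $\Ric\,\wh\omega_\beta=-\wh\omega_\beta$ on $U^*$ while the holomorphic bisectional curvature of $\omega_\beta$ is bounded above by a constant $A$ independent of $\beta$: indeed $\omega_\beta$ is the Calabi model metric with $\sigma=-1$ and $D$ flat, so Lemma~\ref{lemma curvature} gives a uniform curvature bound. Applying the Chern--Lu inequality to the identity map $(U^*,\wh\omega_\beta)\to(U^*,\omega_\beta)$ yields, on $U^*$,
\[
\Delta_{\wh\omega_\beta}\log\tr_{\wh\omega_\beta}\omega_\beta\ge -\tr_{\wh\omega_\beta}\omega_\beta - A\,(\tr_{\wh\omega_\beta}\omega_\beta)^2\cdot(\text{something})\ \ \Longrightarrow\ \ \Delta_{\wh\omega_\beta}\log\tr_{\wh\omega_\beta}\omega_\beta\ge -A'(1+\tr_{\wh\omega_\beta}\omega_\beta),
\]
the precise form being the one used in Step 2 of the proof of Theorem~\ref{weak convergence}. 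On the other hand, applying Chern--Lu to the identity map $(U^*,\omega_\beta)\to(U^*,\wh\omega_\beta)$ requires an \emph{upper} bound on the bisectional curvature of $\wh\omega_\beta$; since $\Ric\,\wh\omega_\beta=-\wh\omega_\beta$ this is not automatic, so instead I would estimate $\tr_{\omega_\beta}\wh\omega_\beta$ by combining the Monge--Amp\`ere equations with the reverse bound, i.e. use that $\det$ of the endomorphism $\omega_\beta^{-1}\wh\omega_\beta$ is bounded above and below (by \eqref{sup bound} and the boundedness of $F_\beta$) together with the one-sided bound already obtained, via the elementary inequality $\tr_{\omega_\beta}\wh\omega_\beta\le (\tr_{\wh\omega_\beta}\omega_\beta)^{n-1}\cdot\frac{\det_{\omega_\beta}\wh\omega_\beta}{1}$.

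The heart of the argument is then the maximum principle applied to an auxiliary function of the form
\[
H_{\beta,\delta}:=\log\tr_{\wh\omega_\beta}\omega_\beta-(A'+1)(\wh\psi_\beta-\psi_\beta)+\delta t,
\]
where the term $\delta t$ (with $t=\log\|v\|^2_h\to-\infty$ along $D$) forces $H_{\beta,\delta}$ to attain its maximum at an interior point $x_{\beta,\delta}\in\overline U\setminus D$ — here one must check that $\log\tr_{\wh\omega_\beta}\omega_\beta$ does not blow up too fast near $D$, which follows from the explicit conical behaviour of both metrics (both have cone angle $\to0$ with comparable rates, cf. \eqref{eq:13} and \eqref{hyp}). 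Computing $\Delta_{\wh\omega_\beta}H_{\beta,\delta}$ at $x_{\beta,\delta}$: the Chern--Lu term contributes $\ge -A'(1+\tr_{\wh\omega_\beta}\omega_\beta)$, the term $-(A'+1)\Delta_{\wh\omega_\beta}(\wh\psi_\beta-\psi_\beta)=-(A'+1)(n-\tr_{\wh\omega_\beta}\omega_\beta)$ (using that $\wh\psi_\beta,\psi_\beta$ solve the respective Monge--Amp\`ere equations, so their Laplacians are $n$ and $\tr_{\wh\omega_\beta}(i\partial\bar\partial\psi_\beta)=\tr_{\wh\omega_\beta}\omega_\beta$ up to the difference of the smooth reference forms), and $\delta\Delta_{\wh\omega_\beta}t=\delta\tr_{\wh\omega_\beta}\theta_D$ which is $O(\delta)\cdot\tr_{\wh\omega_\beta}\omega_\beta$ and harmless. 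At the maximum the Hessian is $\le0$, so we get $\tr_{\wh\omega_\beta}\omega_\beta(x_{\beta,\delta})\le C$; evaluating $H_{\beta,\delta}$ back and using \eqref{sup bound} to control $\wh\psi_\beta-\psi_\beta$ uniformly, then letting $\delta\to0$, gives $\tr_{\wh\omega_\beta}\omega_\beta\le C_2$ on all of $U^*$, i.e. $\omega_\beta\le C_2\wh\omega_\beta$.

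For the reverse inequality $\wh\omega_\beta\le C_2\omega_\beta$ I would not redo a symmetric Chern--Lu estimate (the curvature sign of $\wh\omega_\beta$ is wrong) but rather combine the bound just obtained with the volume forms: from the two Monge--Amp\`ere equations one has $\frac{\wh\omega_\beta^n}{\omega_\beta^n}=e^{\wh\psi_\beta-\psi_\beta+F_\beta}$, which by \eqref{sup bound} and $\|F_\beta\|_\infty\le C_1$ is bounded above and below by constants independent of $\beta$; writing $\wh\omega_\beta=\omega_\beta(I+S)$ for a $\omega_\beta$-self-adjoint endomorphism $S$ with eigenvalues $\lambda_i>-1$, the bound $\tr_{\wh\omega_\beta}\omega_\beta\le C_2$ controls $\sum(1+\lambda_i)^{-1}$ from above, hence each $1+\lambda_i$ from below, and then $\prod(1+\lambda_i)\le C$ forces each $1+\lambda_i$ from above as well; this yields $\wh\omega_\beta\le C_2'\omega_\beta$. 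The main obstacle I anticipate is the \emph{uniformity in $\beta$}: both the curvature bound for $\omega_\beta$ (which only holds because $D$ is flat, and only after the rescaling $\varphi_\beta(t)=\varphi_1(\beta t)$ makes the $\beta$-factors cancel in the curvature, exactly as noted after Lemma~\ref{lemma curvature}) and the choice of the barrier $\delta t$ with the verification that $H_{\beta,\delta}$ is proper near $D$ must be done with constants not degenerating as $\|\beta\|\to0$; keeping careful track of which quantities are $\beta$-independent — the constant $A$ in Chern--Lu, the constant $C_1$ in \eqref{sup bound}, and the rate of the conical blow-up — is where the real care is needed, whereas the maximum-principle bookkeeping itself is routine.
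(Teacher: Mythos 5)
Your proposal follows essentially the same route as the paper: Chern--Lu applied to $\mathrm{id}:(U^*,\wh\omega_\beta)\to(U^*,\omega_\beta)$ using the uniform curvature bound from Lemma~\ref{lemma curvature}, the maximum principle applied to $\log\tr_{\wh\omega_\beta}\omega_\beta + (B+1)(\psi_\beta-\wh\psi_\beta)+\delta t$ (your sign is the correct one; the paper's displayed expression has a sign typo there), the $C^0$ input from \eqref{sup bound}, and then the reverse inequality from the two-sided bound on $\wh\omega_\beta^n/\omega_\beta^n$ together with elementary eigenvalue manipulation.

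Two small points of care. First, you only discuss the maximum occurring away from $D$; since the domain is $\overline U\setminus D$, the maximum of $H_{\beta,\delta}$ could land on $\partial U$, and at such a point the second-order maximum principle is not available. The paper handles this by using the already established local smooth convergence \eqref{smooth convergence}--\eqref{smooth convergence 2} to get a direct uniform bound $H_\beta\le n+1$ on $\partial U$ for $\beta$ small, and then treats the two cases separately. Second, the claim that $\delta\,\Delta_{\wh\omega_\beta}t$ is ``$O(\delta)\cdot\tr_{\wh\omega_\beta}\omega_\beta$'' is not uniform (indeed $\theta_D\lesssim \beta^{-1}\omega_\beta$ near $D$); but it is also unnecessary: one only needs $\Delta_{\wh\omega_\beta}t=\tr_{\wh\omega_\beta}\theta_D\ge 0$ so the term helps the lower bound, and letting $\delta\to0$ at the end removes its contribution entirely.
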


\begin{proof}
Consider the smooth function 
\[H_\beta:=\log \tr_{\wh \omega_\beta}  \omega_\beta \quad \mbox{ on } U^*.\] 
Since $\Ric \, \wh \omega_\beta = -\wh \omega_\beta$ and the holomorphic bisectional curvature of $\omega_\beta$ is bounded above independently of $\beta$ by Lemma~\ref{lemma curvature}, an application of Chern-Lu formula (see e.g. \cite[Proposition~7.1]{Rub14}) yields a constant $B>0$ independent of $\beta$ such that
\begin{equation}
\label{CL2}
\Delta_{\wh \omega_\beta} H_\beta \ge -1 -Be^{H_\beta} \quad \mbox{on } U^*.
\end{equation}
Thanks to \eqref{smooth convergence}- \eqref{smooth convergence 2}, we have 
\begin{equation}
\label{boundary bound}
H_\beta \le (n+1) \quad \mbox{on } \d U
\end{equation}
for $\beta$ small enough. 

Since $\Delta_{\wh \omega_\beta}(\psi_\beta-\wh \psi_\beta)=e^{H_\beta}-n$ and $i\d\dbar t \ge 0$, we get for any $\delta>0$ 
\[\Delta_{\ome} \big(H_\beta-(B+1)\cdot (\psi_\beta-\wh \psi_\beta)+\delta t\big)= e^{H_\beta}-n(B+1)-1.\]
The maximum of the function inside the Laplacian is attained at $ x\in \overline U\setminus D$. If $x\in \d U$, then \eqref{sup bound}-\eqref{boundary bound} and the inequality $t\le 0$ imply that $H_\beta\le (n+1)+2(B+1)C_1-\delta t$ on $U^*$. If  $x\in U$, then the maximum principle implies that $H_\beta\le (B+1)(n+2C_1)+1-\delta t$ on $U^*$. Passing to the limit when $\delta\to 0$, one finds $H_\beta\le C_2$ on $U^*$.  The result follows (up to enlarging $C_2$) since the Monge-Ampère of $\om_{\beta}$ and $\wh \omega_\beta$ are commensurable \-- which itself relies on the estimate \eqref{sup bound}. 
\end{proof}

Since we know that $\omega_\beta$ and $\wh \omega_\beta$ are asymptotically close at any order away from $D$, one can improve Lemma~\ref{lemma laplacian bound} as follows. 

\begin{lem}
\label{lemma improved laplacian}
There exists a sequence of numbers $\ep_\beta \searrow 0$  such that
\begin{equation}
\label{laplacian bound 2}
(1-\ep_\beta)\omega_\beta\le \wh \omega_\beta \le (1+\ep_\beta) \omega_\beta \quad \mbox{on } U^*. 
\end{equation}
\end{lem}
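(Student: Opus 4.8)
The plan is to reduce \eqref{laplacian bound 2} to two uniform convergences — of the ratio of volume forms to $1$, and of $\tr_{\wh\omega_\beta}\omega_\beta$ to $n$, both in $L^\infty(U^*)$ — and then to control the eigenvalues directly. Writing $\lambda_1(x)\ge\cdots\ge\lambda_n(x)>0$ for the eigenvalues of $\omega_\beta$ with respect to $\wh\omega_\beta$, Lemma~\ref{lemma laplacian bound} confines them to the fixed interval $[C_2^{-1},C_2]$, so the elementary inequality $x-1-\log x\ge\kappa\,(x-1)^2$ (valid on $[C_2^{-1},C_2]$, with $\kappa=\kappa(C_2)>0$), summed over $i$, yields at every point
\[
\kappa\sum_{i=1}^n(\lambda_i-1)^2\ \le\ \big(\tr_{\wh\omega_\beta}\omega_\beta-n\big)-\log\frac{\omega_\beta^n}{\wh\omega_\beta^n}.
\]
Thus, once I know $\|\omega_\beta^n/\wh\omega_\beta^n-1\|_{L^\infty(U^*)}\to 0$ and $\|\tr_{\wh\omega_\beta}\omega_\beta-n\|_{L^\infty(U^*)}\to 0$, the right-hand side is $\le\ep_\beta^2$ with $\ep_\beta\searrow 0$, and \eqref{laplacian bound 2} follows.

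For the volume ratio, the first step is to upgrade \eqref{sup bound} to $\|\wh\psi_\beta-\psi_\beta-G\|_{L^\infty(U)}\to 0$, where $G$ is the $C^\infty_{\rm loc}(U^*)$-limit of $\wh\psi_\beta-\psi_\beta$ (a bounded pluriharmonic function on $U^*$, since both limiting potentials represent $\omke$). The key observation is that, because $\Ric\,\omega_\beta=-\omega_\beta$ and $\Ric\,\wh\omega_\beta=-\wh\omega_\beta$ on $U^*$, one has $i\d\dbar\log(\wh\omega_\beta^n/\omega_\beta^n)=\wh\omega_\beta-\omega_\beta=i\d\dbar(\wh\psi_\beta-\psi_\beta)$, so that $F_\beta+G$ is pluriharmonic and bounded on $U^*$; it therefore extends across $D$ and obeys the maximum principle on $\overline U$, while on $\partial U$ it tends to $0$ by \eqref{smooth convergence}--\eqref{smooth convergence 2}, whence $\|F_\beta+G\|_{L^\infty(\overline U)}\to 0$. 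Setting $w_\beta:=\wh\psi_\beta-\psi_\beta-G$, which solves $(\omega_\beta+i\d\dbar w_\beta)^n=e^{w_\beta+(F_\beta+G)}\omega_\beta^n$ on $U^*$ with right-hand defect and boundary values tending to $0$, the maximum-principle argument used to prove \eqref{sup bound} — extremize $w_\beta\pm\delta t$ over $\overline U$, using $i\d\dbar t\ge 0$ to keep extrema away from $D$ — gives $\|w_\beta\|_{L^\infty(U)}\to 0$. Hence $\omega_\beta^n/\wh\omega_\beta^n=e^{-(w_\beta+F_\beta+G)}\to 1$ uniformly on $U^*$.

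The trace convergence is the main obstacle. Away from $D$ it follows from \eqref{smooth convergence}--\eqref{smooth convergence 2}, but near $D$ the Chern--Lu inequality \eqref{CL2} only produces a $\beta$-independent \emph{upper} bound for $\tr_{\wh\omega_\beta}\omega_\beta$: the coefficient in front of $\tr_{\wh\omega_\beta}\omega_\beta$ is the fixed bisectional-curvature bound of $\omega_\beta$ and does not decay, so no barrier of the kind used in Lemma~\ref{lemma laplacian bound} can bring the bound down to $n$. My plan is to exploit the explicit collapsing geometry \eqref{eq:10} instead: around a point $x_0\in U^*$ close to $D$, pass to the local universal cover unwrapping the collapsing $S^1$-fibre and the short $1$-cycles of the flat torus $D$ — an explicit polydisc-type chart in the cusp picture — on which the pulled-back model $\omega_\beta$ has, by Lemma~\ref{lemma curvature}, curvature bounded uniformly in $\beta$ and injectivity radius bounded below on a fixed-size ball. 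On that cover the lifted equation $(\omega_\beta+i\d\dbar\wt w_\beta)^n=e^{\wt w_\beta+\wt F_\beta}\omega_\beta^n$ holds with $C^0$-small data (by the previous paragraph) and with $\omega_\beta+i\d\dbar\wt w_\beta$ uniformly equivalent to $\omega_\beta$; the complex Monge--Ampère interior estimates (Evans--Krylov, together with Schauder for $\Delta_{\omega_\beta}-\mathrm{id}$), now uniform in $\beta$ since the background geometry is bounded, bootstrap the $C^0$-smallness of $\wt w_\beta$ to the smallness of $i\d\dbar\wt w_\beta$, hence of $\wh\omega_\beta-\omega_\beta$, near $x_0$. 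Uniformity of these bounds in $x_0$ then gives $\|\tr_{\wh\omega_\beta}\omega_\beta-n\|_{L^\infty(U^*)}\to 0$, which completes the proof.
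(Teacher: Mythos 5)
Your proposal takes a genuinely different, more explicit route than the paper's, and it usefully surfaces a subtlety. The paper establishes $\|F_\beta\|_{L^\infty(U)}\to 0$, with $F_\beta := \log(\wh\omega_\beta^n/\omega_\beta^n)$, by applying the maximum principle directly to $F_\beta\pm\delta t$: at an interior extremum $dd^cF_\beta=\wh\omega_\beta-\omega_\beta$ is semidefinite, so the volume forms compare, and the lemma is then declared proved ``since $\wh\omega_\beta$ and $\omega_\beta$ are uniformly quasi-isometric.'' You are right to flag that this final implication is not formal: eigenvalues confined to $[C_2^{-1},C_2]$ whose product tends to $1$ need not individually tend to $1$ (the AM--GM gap), so some elliptic regularity is genuinely needed. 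The bootstrap you sketch — Evans--Krylov and Schauder on local universal covers of $(U^*,\omega_\beta)$, using the $\beta$-uniform curvature bound from Lemma~\ref{lemma curvature}, combined with interpolation to convert $C^0$-smallness of the potential into $C^2$-smallness — is the right mechanism. Two caveats on the execution. First, the reduction is more roundabout than necessary: the volume-ratio convergence is exactly the paper's maximum-principle step (no need for the pluriharmonic limit $G$ or the auxiliary $w_\beta$), and it is cleaner to run the bootstrap on $F_\beta$ itself, which is $C^0$-small, is a relative Kähler potential with $\wh\omega_\beta=\omega_\beta+dd^cF_\beta$, and solves $(\omega_\beta+dd^cF_\beta)^n=e^{F_\beta}\,\omega_\beta^n$; once $dd^cF_\beta\to 0$ against $\omega_\beta$ the lemma follows outright, making the trace-to-$n$ plus elementary-inequality framing a redundant detour. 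Second, near $D$ unwrapping the collapsing $S^1$-fibre does not remove the conical singularity (the cover still carries a cone), so obtaining Schauder/Evans--Krylov constants uniform in the basepoint \emph{up to} the divisor requires either the conical Schauder estimates of Section~\ref{sec:unif-scha-estim} or a separate rescaling argument very close to $D$; this uniformity in $x_0$ is a genuine technical point the sketch glosses over.
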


\begin{proof}
We introduce for any $\delta>0$ the quantities
\[F_{\beta}:=\log\left(\frac{\wh \omega_\beta^n}{\omega_\beta^n}\right) \quad \mbox{and} \quad F_{\beta,\delta}:=F_\beta+\delta t.\]
The function $F_\beta$ is bounded on $U$ and smooth away from $D$. If we can show that $F_\beta$ converges uniformly to $0$ on $U$, then we will be done since we know that $\wh \omega_\beta$ and $\omega_\beta$ are uniformly quasi-isometric thanks to Lemma~\ref{lemma laplacian bound}.
First, we observe that 
\begin{equation}
\label{boundary}
\lim_{\beta \to 0} \|F_\beta\|_{L^{\infty}(\partial U)}=0
\end{equation} 
thanks to  \eqref{smooth convergence}- \eqref{smooth convergence 2}. Let $x\in U$ be a point where $F_{\beta,\delta}$ attains its maximum. If $x\in \partial U$, we have $F_{\beta,\delta}\le \|F_\beta\|_{L^{\infty}(\partial U)}$ which goes to zero by \eqref{boundary}. Otherwise, $x\in U^*$ and we have $dd^c F_{\beta,\delta}(x)\le 0$.
Since both metrics are Kähler-Einstein with the same constant, we have $dd^c F_\beta=\wh \omega_\beta-\omega_\beta$. In particular, we get at the point $x$ the following inequality
\[\wh \omega_\beta(x)\le \omega_\beta(x)-\delta dd^c t \le \omega_\beta(x)\]
It follows that $F_\beta(x)\le 0$, hence $F_{\ep,\delta}\le 0$. Passing to the limit when $\delta\to 0$, we obtain that in any case, $\sup_U F_\beta \le o(1)$ when $\beta \to 0$. 

One can proceed similarly with $G_{\beta,\delta}=\log\left(\frac{\omega_\beta^n}{\wh \omega_\beta^n}\right) +\delta t$ to see that $\inf_U F_{\beta} \ge o(1)$ when $\beta\to 0$. The lemma is proved.
\end{proof}

To finish this section, we put together the Laplacian estimate \eqref{laplacian bound 2} with the asymptotics \eqref{t - infty}-\eqref{t 0}, which yields

\begin{thm}
\label{KE asymptotics}
The conical Kähler-Einstein metric $\wh \omega_\beta$ has the following behavior on $U$ as $\beta$ approaches zero:
\begin{enumerate}[label=$\bullet$]

\item On $\{\beta t\to 0\}$, it is quasi-isometric to 
\[\omke= (n+1)\Big[\frac{i\xi \wedge \bar \xi}{(-t)^2}+\frac{\theta_D}{-t}\Big]\]
with quasi-isometry constant converging to $1$ as $\beta t \to 0$. 

\item On $\{\beta t\to -\infty\}$, it is quasi-isometric to 
\[ a_n \beta^2\cdot e^{\beta t}  i\xi \wedge \bar  \xi+\beta\theta_D\]
with quasi-isometry constant converging to $1$ as $\beta t \to -\infty$ and $\beta \to 0$ and where $a_n=\frac{e^{I_n}}{n+1}$, $I_n$ being defined in \eqref{intI}. \\

\item Elsewhere, i.e. on $\{-C\le \beta t\le C^{-1}\}$; it is quasi-isometric to 
\[\beta^2\cdot  e^{\beta t}  i\xi \wedge \bar  \xi + \beta\theta_D\]
with quasi-isometry constant uniformly bounded as $\beta \to 0$. 
\end{enumerate}
\end{thm}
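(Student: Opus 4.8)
The plan is to reduce the statement to the explicit Calabi model $\omega_\beta$ via the two-sided Laplacian bound of Lemma~\ref{lemma improved laplacian}, and then to read off the three regimes directly from the expansions \eqref{t - infty} and \eqref{t 0} of $\varphi_1$. Since \eqref{laplacian bound 2} gives $(1-\ep_\beta)\,\omega_\beta\le\wh\omega_\beta\le(1+\ep_\beta)\,\omega_\beta$ on $U^*$ with $\ep_\beta\searrow0$, it suffices to prove each of the three quasi-isometries with $\wh\omega_\beta$ replaced by the model $\omega_\beta$ and then compose the quasi-isometry constants: the factor $(1\pm\ep_\beta)$ does not affect the limit in the two regimes $\{\beta t\to0\}$ and $\{\beta t\to-\infty\}$ where the constant is required to tend to $1$, and it is absorbed into a uniform constant in the intermediate regime.

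Recall next, from \eqref{eq:9} specialised to the ball-quotient case $\sigma=-1$ together with $dt\wedge\eta=i\xi\wedge\bar\xi$, that (up to the irrelevant additive constant separating $\psi_\beta$ from $\varphi_1(\beta t)$)
\[
\omega_\beta=\beta^2\,\varphi_1''(\beta t)\,i\xi\wedge\bar\xi+\beta\,\varphi_1'(\beta t)\,\theta_D .
\]
This $(1,1)$-form is diagonal in the $\mathcal C^\infty$ orthogonal splitting $\C\xi\oplus p^*\Omega^{1,0}_D$ of \textsection\ref{sec:curv-calc}, and so are all three comparison forms in the statement; hence comparing $\omega_\beta$ to any of them amounts to comparing separately the coefficient of $i\xi\wedge\bar\xi$ (the normal-circle direction) and the coefficient of $\theta_D$ (the divisor directions).

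It then remains to estimate $\varphi_1'(\beta t)$ and $\varphi_1''(\beta t)$ regime by regime. On $\{\beta t\to0^-\}$, differentiating \eqref{t 0} yields $\varphi_1'(u)=\tfrac{n+1}{-u}(1+O((-u)^{n+1}))$ and $\varphi_1''(u)=\tfrac{n+1}{u^2}(1+O((-u)^{n+1}))$; with $u=\beta t$ the powers of $\beta$ cancel and $\omega_\beta$ equals $(n+1)\big[\tfrac{i\xi\wedge\bar\xi}{(-t)^2}+\tfrac{\theta_D}{-t}\big]$ — which is $\omke|_{U^*}$ by \eqref{hyp} — up to the multiplicative error $1+o(1)$ as $\beta t\to0$. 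On $\{\beta t\to-\infty\}$, differentiating \eqref{t - infty} yields $\varphi_1'(u)=1+\tfrac{e^{I_n}}{n+1}e^u+O(e^{2u})$ and $\varphi_1''(u)=\tfrac{e^{I_n}}{n+1}e^u+O(e^{2u})$, so with $u=\beta t$ one finds $\omega_\beta=a_n\beta^2e^{\beta t}\,i\xi\wedge\bar\xi+\beta\,\theta_D$ up to a factor $1+O(e^{\beta t})$, with $a_n=\tfrac{e^{I_n}}{n+1}$; the error tends to $0$ as $\beta t\to-\infty$. On the remaining region, where $\beta t$ stays in a fixed compact subinterval of $(-\infty,0)$, the quantities $\varphi_1'(\beta t)$, $\varphi_1''(\beta t)$ and $e^{\beta t}$ are all bounded above and below by positive constants depending only on that subinterval, so $\omega_\beta$ is quasi-isometric to $\beta^2e^{\beta t}\,i\xi\wedge\bar\xi+\beta\,\theta_D$ with a constant uniform in $\beta$. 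Composing each of these with Lemma~\ref{lemma improved laplacian} gives the three assertions.

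There is no real analytic difficulty beyond the expansions of $\varphi_1$ that are already established; the only point needing a little care is the bookkeeping of the quasi-isometry constants, so that in the first two regimes the composite constant $(1\pm\ep_\beta)(1+o(1))$ genuinely converges to $1$ — which it does, since both $\ep_\beta$ and the model-comparison errors are $o(1)$ — and that the limit in \eqref{hyp} is matched exactly and not merely up to a constant. The mildly delicate step, if any, is ensuring that the $o(1)$ in the regime $\{\beta t\to0\}$ is uniform, but it is: it is governed by the single scalar $(-\beta t)^{n+1}\to0$.
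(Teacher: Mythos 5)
Your proposal is correct and follows exactly the approach the paper uses (the paper simply states that the theorem follows from combining the Laplacian estimate \eqref{laplacian bound 2} of Lemma~\ref{lemma improved laplacian} with the asymptotics \eqref{t - infty}--\eqref{t 0}, which is precisely the structure of your argument). You have spelled out the bookkeeping — differentiating the expansions of $\varphi_1$, substituting $u=\beta t$, and tracking how the $(1\pm\ep_\beta)$ factor interacts with the model errors — in more detail than the paper does, but the content is the same.
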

 The picture below illustrates the result. 

\begin{center}
\begin{tikzpicture}
\draw[dotted] (1,0) .. controls (1.5,0.04) and (2,0.1) .. (2.5,0.2);
\draw[dotted] (2.5,0.2) .. controls (3,0.32) and (3.5,0.66) .. (4,0.9);
\draw (4,0.9)  .. controls (4.5,1.3) and (5,1.6) .. (5.5,1.9); 
\draw (5.5,1.9) .. controls (6,2.1) and (6.5,2.2) .. (7, 2.1);
\draw  (7,2.1) .. controls (7.5,2) and (8,1.7) .. (8.5,1.3) ;
\draw (8.5, 1.3) .. controls (9.2, 0.6) and (9.3, 0.4) .. (9.36, 0) node[left,Fuchsia] { $\omega_{\rm hyp}$ \,\,\,};

\draw (2.5,0.1) .. controls (3,0.2) and (3.5,0.4) .. (4,0.9);
\draw (2.5,0.1) -- (2.5,-0.1);
\draw[ <->] (2.3,0.1) -- (2.3,-0.1) node[left] {\tiny$\sqrt \beta$};

\draw[dotted] (2.5,0) -- (4,0.4);
\draw[dotted] (2.5,0) -- (4,-0.4);

\draw (2.5,-0.1) .. controls (3,-0.2) and (3.5,-0.4) .. (4,-0.9);

\draw[dotted] (1,0) .. controls (1.5,-0.04) and (2,-0.1) .. (2.5,-0.2);
\draw[dotted] (2.5,-0.2) .. controls (3,-0.32) and (3.5,-0.66) .. (4,-0.9);
\draw (4,-0.9)  .. controls (4.5,-1.3) and (5,-1.6) .. (5.5,-1.9); 
\draw (5.5,-1.9) .. controls (6,-2.1) and (6.5,-2.2) .. (7,- 2.1);
\draw  (7,-2.1) .. controls (7.5,-2) and (8,-1.7) .. (8.5,-1.3) ;
\draw (8.5, -1.3) .. controls (9.2, -0.6) and (9.3, -0.4) .. (9.36, 0);

\draw (7.6,0.3) arc (90:-90:0.38)  ;
\draw (7.85,0.2) arc (90:270:0.26);

\draw[cyan] (2.95,0) ellipse (0.05 and 0.23);
\draw[cyan] (3.25,0) ellipse (0.08 and 0.33);
\draw[cyan] (3.75,0) ellipse (0.15 and 0.65);

\draw[red, rotate=0] (2.95,0) ellipse (0.05 and 0.12);
\draw[red, rotate=-2] (3.25,0.12) ellipse (0.07 and 0.22);
\draw[red, rotate=-9] (3.7,0.58) ellipse (0.12 and 0.34);

\end{tikzpicture}
\end{center}

\subsection{Ramified covers}
\label{section covers}
In Set-up~\ref{setup}, assume additionally that $\Gamma$ is arithmetic, so that $\Gamma$ can be realized as the integral points $G(\mathbb Z)$ of an algebraic group $G$ defined over $\mathbb Z$. Given an integer $m\ge 1$, the congruence subgroup $\Gamma(m)=\mathrm{Ker} \Big[ G(\mathbb Z)\to G\left(\quotientd{\mathbb Z}{m\mathbb Z}\right)\Big]$ induces an étale cover \[\pi_m:\quotient{\Gamma(m)}{\mathbb B}\to\quotient{\Gamma}{\mathbb B}.\]
Let $X_m:=\quotient{\Gamma(m)}{\mathbb B}$ and let $\overline{X}_m$ be a log smooth compactification of $X_m$. The étale cover $\pi_m:X_m\to X$ can be uniquely extended to a cover $\overline \pi_m:\overline{X}_m\to \oX$. Up to taking a further cover, one can assume that $\overline \pi_m$ is Galois, with group $\Lambda_m$. Moreover, Mumford shows in \cite[p270-271]{Mum77} that $\overline \pi_m$ is highly ramified along $D$ in the following sense. Let $\nu_{m,\lambda}$ be the ramification order of $\overline \pi_m$ along $D_\lambda$. Then, given any integer $\ell \ge 1$, there exists $m=m(\ell)$ such that $\ell \vert \nu_{m,\lambda}$ for any $\lambda=1,\ldots,N$. 

Pick $\ell$ arbitrary large and consider the ramified cover $\overline{\pi}_m: \overline{X}_m\to \oX$ for $m=m(\ell)$ as above. Set $\beta_m:=(\frac{1}{\nu_{m,1}}, \ldots, \frac{1}{\nu_{m,N}})$ and consider the conical Kähler-Einstein metric $\om_{\beta_m}$ with cone angles $2\pi (\beta_m)_\lambda$ along $D_\lambda$. By the choice of $\beta_m$, $\om_{\beta_m}$ is an orbifold Kähler metric for the pair $(X,\sum_\lambda(1-\frac{1}{\nu_{m,\lambda}})D_\lambda)$, hence $\omega_m:=\overline \pi_m^*\om_{\beta_m}$ is a genuine Kähler-Einstein metric on the compact Kähler manifold $\overline{X}_m$, \ie
\[\Ric \om_m=-\om_m \quad \mbox{on } \overline{X}_m.\] 

As $\ell\to +\infty$, so does $m$ and $\beta_m$ converges to $0$, so that $\om_{\beta_m}$ converges to the hyperbolic (Bergman) metric on $X$ by the previous results. Schematically, one can summarize the situation as below 
\[\quotientd{(\overline X_m, \omega_m)}{\Lambda_m} \quad \underset{m\to +\infty}{\longrightarrow} (\quotient{\Gamma}{\mathbb B}, \omega_{\rm Berg}).\]

\section{Gluing with the Tian-Yau metric}
\label{sec:gluing-with-tian}

We now pass to the setting of a compact Fano manifold $X$ of dimension $n\ge 2$ endowed with a {\it smooth} anticanonical divisor $D\subset X$. Note that $D$ is connected by the Lefschetz hyperplane theorem. We denote by $L$ the normal bundle of $D$. The objects on $L$ constructed in section \ref{sec:calabi-metric} will now carry an index $L$ ($h_L$, $\Omega_L$, $\varphi_{\beta,L}$, $\omega_{\beta,L}$, etc.) to distinguish them from the objects constructed on $X$.

\subsection{The Tian-Yau metric}
\label{sec:tian-yau-metric}

The Tian-Yau metric was obtained in \cite{TY1} and precise asymptotics are derived in \cite{Hein12}. A nice summary is written in \cite[§~3]{HSVZ}, and the asymptotics written below are taken from this reference.

We choose the holomorphic $(n-1)$-form $\varpi$ on $D$ of section \ref{sec:calabi-metric} so that $\frac{i^{(n-1)^2}}n \varpi \wedge \overline\varpi = \omega_D^{n-1}$.
We have a global holomorphic $n$-form $\Omega$ on $X$ with a simple pole along $D$, normalized by $\varpi=\mathrm{Res}_D\Omega$, so that the form induced by $\Omega$ on $L$ is $\Omega_L=\xi\wedge p^*\varpi$.

The normal bundle $L$ gives the infinitesimal neighbourhood of $D$ in $X$. One can identify a neighbourhood of $D$ in $X$ with a disc bundle in $L$: one method uses the Riemannian exponential of a Hermitian metric on $X$, but we prefer a more intrinsic identification using the theory of extremal discs, which produces a (non-holomorphic) fibration in holomorphic discs.  The theory was especially used by Lempert \cite{Lem92} to study fillings of 3-dimensional Cauchy-Riemann manifolds, both on the pseudoconvex and pseudoconcave sides,  and also by Bland-Duchamp \cite{BlaDuc91}. We will use the following proposition, whose proof is similar to that for 3-dimensional pseudoconcave domains \cite[Theorem 10.1]{Lem92}; it can also be extracted from the general statement in \cite[Theorem 4.1]{Biq02}, which is valid for any signature of the (non degenerate) Levi form.

\begin{prop}\label{prop:extremal-discs}
  There exists a diffeomorphism $\Phi:\Delta_L\rightarrow U_L\subset X$ from the disc bundle $\Delta_L\subset L$ to a neighbourhood $U_L\subset X$ of $D$, such that $\phi:=\Phi^*J_X-J_L\in \Omega^{0,1}(T^{1,0})$ satisfies $\phi|_D=0$, $\phi$ is a section of $(p^*\Omega^{0,1}_D)\otimes\ker \eta^{1,0}$ (that is, $\phi$ is purely horizontal), and $\phi$ is holomorphic along the discs of $\Delta_L$.

  Moreover $\Phi^*\Omega=v\,  (1-\phi)^*\Omega_L$, where $v$ is a function on $\Delta_L$ such that $v|_D=1$ and $v$ is holomorphic along the discs.
\end{prop}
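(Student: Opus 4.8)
The plan is to produce $\Phi$ from the theory of stationary (extremal) holomorphic discs filling a pseudoconcave tubular neighbourhood of $D$, as developed by Lempert \cite{Lem92} and Bland--Duchamp \cite{BlaDuc91}; the existence statement I will invoke can be quoted from \cite[Theorem~4.1]{Biq02}, valid for any nondegenerate Levi form. In the present setting $\sigma=1$ and $L=N_{D/X}\cong -K_X|_D$ is ample; fix $h_L$ with $i\Theta(L,h_L)=\theta_D>0$. The function $t=\log\|\cdot\|^2_{h_L}$ on $L\setminus D$ satisfies $i\d\dbar t=-p^*\theta_D$, so its level sets $\{t=c\}$ --- whose maximal complex subbundle is the horizontal distribution, where $i\d\dbar t$ is negative definite --- are strictly pseudoconcave on the side containing $D$; hence the boundary $\partial U_L$ of a small tubular neighbourhood $U_L$ of $D$ in $X$ is a strictly pseudoconcave boundary. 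Fix a reference diffeomorphism $\Psi_0:\Delta_L\to U_L$, equal to the identity on $D$, with $\Psi_0^*J_X-J_L$ vanishing along the zero section (possible since the two complex structures agree on $D$ under the normal-bundle identification); after shrinking, $\Psi_0^*J_X$ is a small integrable deformation of $J_L$.

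\textbf{The stationary discs.} In the model $L$ the fibres $\Delta_{L,x}$ are $J_L$-holomorphic discs with centre $x\in D$ and boundary on $\partial\Delta_L$, stationary in Lempert's sense for the pseudoconcave side. Using $i\Theta(L,h_L)=\theta_D>0$, the normal bundle of such a model disc together with the stationarity boundary condition yields a Fredholm boundary $\dbar$-problem on $\overline\Delta$: after fixing a parametrization ($f(0)=x$, $f'(0)$ a prescribed frame of $L_x$), the linearized operator is surjective with $(n-1)$-complex-dimensional kernel, spanned by the deformations moving the centre inside $D$. The implicit function theorem in $C^{k,\alpha}(\overline\Delta)$-spaces then furnishes, for the nearby structure $\Psi_0^*J_X$, a unique family of stationary holomorphic discs $\Delta'_x$ ($x\in D$), smooth in $x$ and foliating a neighbourhood of $D$. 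Let $g$ be the diffeomorphism of $\Delta_L$ carrying each model fibre $\Delta_{L,x}$ onto $\Delta'_x$ with matched parametrizations, and $\Phi:=\Psi_0\circ g:\Delta_L\to U_L$; then $\Phi|_D=\mathrm{id}$ and, by construction, the fibres of $p$ are $\Phi^*J_X$-holomorphic discs. I expect this step to be the main obstacle: everything rests on identifying the correct stationarity condition and the normal-bundle splitting type making the linearization surjective with exactly the $D$-parametrized kernel --- which is precisely the analysis of \cite{Lem92,BlaDuc91,Biq02}.

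\textbf{Reading off the normal form.} Put $\phi:=\Phi^*J_X-J_L\in\Omega^{0,1}(T^{1,0})$ (a legitimate representation since the deformation is small). Since $\Psi_0^*J_X-J_L$ vanishes on $D$ and $g$ is first-order trivial along $D$ (fibre parametrizations being matched), $\phi|_D=0$. Since the fibres of $p$ are $\Phi^*J_X$-holomorphic, a type decomposition shows $\phi$ annihilates vertical $(0,1)$-vectors and takes horizontal $(1,0)$-values, \ie $\phi$ is a section of $(p^*\Omega^{0,1}_D)\otimes\ker\eta^{1,0}$. Holomorphicity of $\phi$ along the discs follows from integrability: $\Phi^*J_X$ is integrable, hence $\dbar\phi+\tfrac12[\phi,\phi]=0$; as $\phi$ is horizontal, the Fr\"olicher--Nijenhuis bracket has no $\overline\xi$-component (and $d\eta=-\sigma\theta_D$ contributes none either), so the $\overline\xi$-component of that equation reduces to $\dbar_{\mathrm{fibre}}\phi=0$ on each disc. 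Finally, $\Phi^*\Omega$ and $(1-\phi)^*\Omega_L$ are both $\Phi^*J_X$-holomorphic $n$-forms on $\Delta_L$ with a simple pole along the zero section and the same residue $\varpi$ (for $\Phi^*\Omega$ because $\Phi|_D=\mathrm{id}$ and $\mathrm{Res}_D\Omega=\varpi$; for $(1-\phi)^*\Omega_L$ because $\phi|_D=0$), so their quotient $v:=\Phi^*\Omega/\big((1-\phi)^*\Omega_L\big)$ is a $\Phi^*J_X$-holomorphic function with $v|_D=1$; restricting a $\Phi^*J_X$-holomorphic function to the $\Phi^*J_X$-holomorphic fibres, $v$ is holomorphic along the discs.
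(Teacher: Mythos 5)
The paper gives no proof of this proposition --- it points only to \cite[Theorem 10.1]{Lem92} and \cite[Theorem 4.1]{Biq02} --- so your general route (filling a strictly pseudoconcave tube around $D$ by stationary/extremal discs, Fredholm linearization at the model fibres, implicit function theorem) is exactly the one the authors intend. The setup, the pseudoconcavity computation, and the structure of the IFT argument are fine; the stationary-disc analysis is correctly delegated to those references.

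There is, however, a genuine gap in the ``reading off the normal form'' step. You deduce from the $\Phi^*J_X$-holomorphicity of the fibres that $\phi$ both annihilates vertical $(0,1)$-vectors \emph{and} takes horizontal $(1,0)$-values. The first assertion is correct: a fibrewise biholomorphism onto $J_X$-holomorphic discs forces $\Phi^*J_X$ and $J_L$ to agree in the vertical direction, so $\phi$ kills vertical $(0,1)$-vectors. The second assertion does \emph{not} follow. In a local trivialization of $L$ over $D$, with base coordinate $x$ and fibre coordinate $z$, take $\phi = f(x)\,d\bar x\otimes\partial_z$ with $f$ a function of $x$ alone: this satisfies $\bar\partial\phi+\tfrac12[\phi,\phi]=0$, the fibres $\{x=\text{const}\}$ remain holomorphic for the deformed structure, yet $\phi$ takes purely \emph{vertical} $(1,0)$-values. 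So holomorphicity of the fibres constrains only the source of $\phi$, and says nothing about its target. Horizontality of the target --- equivalently $\Phi^*J_X\,dt=J_L\,dt$, precisely the fact invoked in Lemma~\ref{lem:est-diff-g} --- is an additional gauge normalization; it is part of the content of the extremal-disc normal form in \cite{Lem92,Biq02}, built into the way the foliation is parametrized, and has to be extracted from those theorems rather than obtained by a type decomposition. Your final paragraph then uses horizontality of $\phi$ to conclude that $\phi$ is holomorphic along the discs, so that argument also hangs on this unproved point.

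A lesser issue: you assert that $(1-\phi)^*\Omega_L$ is $\Phi^*J_X$-holomorphic, hence that $v=\Phi^*\Omega/(1-\phi)^*\Omega_L$ is $\Phi^*J_X$-holomorphic. But the $(n,0)_{\Phi^*J_X}$-projection of a $J_L$-closed form is not closed for $\Phi^*J_X$ in general, and the proposition only asserts $v$ holomorphic \emph{along} the discs; even that weaker statement requires an argument that again presupposes the (unestablished) horizontality of $\phi$.
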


 The advantage of this canonical diffeomorphism is to simplify a number of estimates or calculations, for example in Lemma \ref{lem:est-diff-g}, for a potential $\varphi$ depending only of the distance in the normal bundle, we will see that the Kähler form $i\partial\dbar \varphi$ is the same when calculated with respect to $J_L$ or $J_X$.

We still denote $t=\log \|v\|_{h_L}^2$, which via the diffeomorphism $\Phi$ we can also see as a function on $U_L$. We modify the function $t$ on $\{ -2\leq t\leq -1\}$ to get a smooth function $\tilde t$ on $X\setminus D$ such that
\begin{equation}
  \label{eq:24}
  \tilde t =
  \begin{cases}
    t & \text{on } t \leq -2, \\ -1 & \text{on }t\geq -1 \text{ and }X\setminus U_L.
  \end{cases}
\end{equation}

We denote $\omega_{TY,L}=(\frac n{n+1})^{1+\frac 1n} i\partial \dbar (-t)^{1+\frac 1n}$ the Tian-Yau metric defined on $\{t<0\} \subset L$, and $g_{TY,L}$ the corresponding Riemannian metric. From (\ref{eq:3}) it is given by the formula
\begin{equation}
  \label{eq:19}
  \omega_{TY,L} = (\tfrac n{n+1})^{\frac1n} \big( \tfrac1n (-t)^{-1+\frac 1n} dt\wedge\eta + (-t)^{\frac 1n} \theta_D \big). 
\end{equation}

Take some Hermitian metric $h$ on $K_X^{-1}$ such that $h|_D=h_L$, and with positive curvature on $X$.  In order to construct such a metric, we can first extend $h_L$ to a positively curved metric $\tilde h$ on a neighborhood of $D$ using a distance function (cf e.g. \cite[Proposition~3.3(i)]{DP}). Then, extend $\tilde h$ arbitrarily to $X$, and consider $h:=\tilde he^{-A|s|^2}$ where $s$ is a section of $\mathcal O_X(D)$ cutting out $D$, $|\cdot|$ is a hermitian metric on the latter bundle with positive curvature and $A>0$ is a large enough constant.

Note that $h|_D$ is only well-defined up to a constant, which will be fixed later in order to have \eqref{eq:16}. Then, the asymptotics of $\omega_{TY,L}$ coincide with those of the metric 
\begin{equation}
 \omega_0 = i \partial \dbar \big(-\tfrac n{n+1}\log |\Omega^{-1}|^2_h\big)^{1+\frac 1n}\label{eq:17}
\end{equation}
on $X\setminus D$, with the corresponding Riemannian metric $g_0$. More precisely, for any $\varepsilon>0$:
\begin{align}
  \label{eq:25}
  |\nabla_{g_{TY,L}}^j(\Phi^*\Omega-\Omega_L)|_{g_{TY,L}}&= O(e^{(\frac 12-\varepsilon)t}),\\
  \label{eq:18}
  |\nabla_{g_{TY,L}}^j(\Phi^*g_0-g_{TY,L})|_{g_{TY,L}} &= O(e^{(\frac 12-\varepsilon)t}). 
\end{align}
This comes from the fact that the objects on $L$ and on $X\setminus D$ coincide near $D$ up to order $O(z)=O(e^{\frac t2})$, but then the form of the metric (\ref{eq:19}) introduces powers of $t$ in the estimates for the differences and their derivatives, so we simply write $O(e^{(\frac12-\varepsilon)t})$ which will be enough for us.

The Tian-Yau metric on $X\setminus D$ is a Kähler metric $\omega_{TY}= i \partial \dbar \varphi_{TY}$ satisfying
\begin{equation}
  \label{eq:26}
  \omega_{TY}^n = \frac{i^{n^2}}{n+1} \Omega \wedge \overline \Omega
\end{equation}
and asymptotic to our Tian-Yau metric $\omega_{TY,L}$ near $D$. Of course (\ref{eq:26}) implies that it is Ricci flat. For a suitable (unique) normalization of $h$, we have the asymptotics 
\begin{equation}
  \label{eq:16}
  \varphi_{TY} = \big(-\tfrac n{n+1}\log |\Omega^{-1}|^2_h\big)^{1+\frac 1n} + \psi, \quad
  |\nabla_{g_{TY,L}}^j\psi|_{g_{TY,L}} = O(e^{-\varepsilon \sqrt{- t}})
\end{equation}
for all $j\geq0$ and for some $\varepsilon>0$. Compared to \cite{HSVZ}, we have a different normalization of the constants in order to match our models of section \ref{sec:calabi-metric}. The rate $e^{-\varepsilon \sqrt{-t}}$ comes from the fact that harmonic functions which go to zero in the metric $g_0$ have exponential decay in $\sqrt{-t}$.

\subsection{The gluing}
\label{sec:gluing}

We now define Kähler metrics $\omega_\beta$ on $X$, with a cone singularity of angle $2\pi \beta$ around $D$, which are close to be Kähler-Einstein with constant $1$, by gluing the metrics $\omega_{\beta,L}$ of section \ref{sec:calabi-metric} with the Tian-Yau metric $\omega_{TY}$. This is done by gluing the corresponding Kähler potentials around $t_\beta=-\beta^{-1+\mu}$ for some $\mu\in(0,1)$, which will be fixed in the final argument in section \ref{sec:resol-kahl-einst}.

We define on $X\setminus D$
\begin{equation}
  \label{eq:20}
  \varphi_\beta =
  \begin{cases}
    \chi(\tfrac t{t_\beta}) \Phi_*\varphi_{\beta,L} + (1-\chi(\tfrac t{t_\beta})) \beta^{1+\frac1n} \varphi_{TY} & \text{ on } U_L, \\
    \beta^{1+\frac1n} \varphi_{TY} & \text{ on } X\setminus U_L,
\end{cases}
\end{equation}
where $\chi:\Bbb{R}_+\rightarrow\Bbb{R}_+$ is a nondecreasing function such that $\chi(u)=1$ for $u\geq2$ and $\chi(u)=0$ for $u\leq \frac12$. We denote $\omega_\beta = i \partial \dbar \varphi_\beta$ and $g_\beta$ the corresponding Kähler form and Riemannian metric.

This metric is very close to our model $\omega_{\beta,L}$ for $t<t_\beta/2$:
\begin{lem}\label{lem:est-diff-g}
 For any $\varepsilon>0$, one has for $\beta$ small enough, uniformly with respect to $\beta$:
\begin{equation}
  \label{eq:23}
  |\nabla_{g_{\beta,L}}^j(g_\beta-g_{\beta,L})|_{g_{\beta,L}} =
  \begin{cases}
    O(e^{(\frac 12-\varepsilon)t})  & \text{ for } t\leq 2t_\beta, \\
 O((-\beta t)^{(1-\frac j2)(1+\frac1n)}) & \text{ for } 2t_\beta \leq t \leq \frac12 t_\beta.
  \end{cases}
\end{equation}
\end{lem}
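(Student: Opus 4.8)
The plan is to estimate $g_\beta-g_{\beta,L}$ separately on the two sub-ranges, which behave quite differently. On $\{t\le2t_\beta\}$ the cut-off $\chi(t/t_\beta)$ is identically $1$, so $\varphi_\beta=\Phi_*\varphi_{\beta,L}$ and the whole discrepancy comes from the non-holomorphic change of complex structure induced by $\Phi$; on the transition range $\{2t_\beta\le t\le\tfrac12t_\beta\}$ both the cut-off and the difference of the two model potentials contribute. Throughout, the comparison is taken via $\Phi$, i.e.\ on the disc bundle $\Delta_L$.

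\textbf{The range $t\le2t_\beta$.} Here $\chi(t/t_\beta)\equiv1$, so $\Phi^*\omega_\beta=i\partial\dbar\varphi_{\beta,L}$ computed with respect to $\Phi^*J_X=J_L+\phi$, where $\phi=\Phi^*J_X-J_L$ is the tensor of Proposition~\ref{prop:extremal-discs}. I would expand $\Phi^*\omega_\beta-\omega_{\beta,L}$ to first order in $\phi$ and its derivatives, using that $\phi|_D=0$, that $\phi$ is purely horizontal and holomorphic along the discs, and that $v|_D=1$ with $v$ holomorphic along the discs; this gives $\phi=O(e^{t/2})$ and $v-1=O(e^{t/2})$ together with all their derivatives. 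Feeding these into the explicit shape \eqref{eq:9}--\eqref{eq:10} of $g_{\beta,L}$, exactly as in the derivation of \eqref{eq:18}, yields $|\nabla^j_{g_{\beta,L}}(g_\beta-g_{\beta,L})|_{g_{\beta,L}}=O(e^{(\frac12-\varepsilon)t})$, the $\varepsilon$ absorbing the powers of $(-t)$ produced by the shape of the metric. Uniformity in $\beta$ is automatic, since on this range $e^{t/2}\le e^{t_\beta}=e^{-\beta^{-1+\mu}}$, which beats any power of $\beta^{-1}$ coming from the Christoffel symbols of $g_{\beta,L}$.

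\textbf{The gluing range $2t_\beta\le t\le\tfrac12t_\beta$.} Write $\rho=1-\chi(t/t_\beta)$ and $\delta\varphi=\beta^{1+\frac1n}\varphi_{TY}-\varphi_{\beta,L}$, so that, up to the complex-structure error of the previous step (here super-polynomially small in $\beta$, hence negligible),
\[\Phi^*\omega_\beta-\omega_{\beta,L}=i\partial\dbar(\rho\,\delta\varphi)=\rho\,i\partial\dbar\delta\varphi+\partial\rho\wedge\dbar\delta\varphi+\partial\delta\varphi\wedge\dbar\rho+\delta\varphi\,i\partial\dbar\rho.\]
Two inputs enter. First, a cancellation of leading terms: \eqref{t 0 2} gives $\varphi_{\beta,L}=\varphi_1(\beta t)=c_n(-\beta t)^{1+\frac1n}+O((-\beta t)^{2+\frac2n})$ with $c_n=(\tfrac n{n+1})^{1+\frac1n}$, while \eqref{eq:16} and the chosen normalization of $h$ (for which $-\log|\Omega^{-1}|^2_h=-t+O(e^{t/2})$) give $\beta^{1+\frac1n}\varphi_{TY}=c_n(-\beta t)^{1+\frac1n}$ up to an error that is super-polynomially small in $\beta$ on this range, since $-t\asymp\beta^{-1+\mu}\to\infty$; hence $\delta\varphi=O((-\beta t)^{2+\frac2n})$, with the analogous bounds for its $g_{\beta,L}$-covariant derivatives. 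Second, a short computation from \eqref{eq:10}--\eqref{eq:11bis} shows that on this range $|dt|_{g_{\beta,L}}\asymp\beta^{-1}(-\beta t)^{(1-\frac1n)/2}$ while $|t_\beta|^{-1}\asymp\beta(-\beta t)^{-1}$, so that each $g_{\beta,L}$-covariant derivative --- whether it falls on $\rho$, on $\delta\varphi$, or on a metric coefficient --- costs a factor $(-\beta t)^{-(1+\frac1n)/2}$. Since each of the four Leibniz terms above is, up to constants, $\delta\varphi$ carrying a total of two such derivatives, each equals $O((-\beta t)^{2+\frac2n})\cdot(-\beta t)^{-(1+\frac1n)}=O((-\beta t)^{1+\frac1n})$; applying $j$ further covariant derivatives multiplies by $(-\beta t)^{-j(1+\frac1n)/2}$, which is precisely the claimed $O((-\beta t)^{(1-\frac j2)(1+\frac1n)})$.

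\textbf{Main obstacle.} The delicate part is the gluing range, and three points need care. First, that the Tian--Yau and Calabi model potentials genuinely share the leading term $c_n(-\beta t)^{1+\frac1n}$: this rests on the precise normalization of $h$ and on the value $c_n=(\tfrac n{n+1})^{1+\frac1n}$. Second, that covariant differentiation in the doubly-collapsing metric $g_{\beta,L}$ (speed $\beta$ in the fibre direction, $\sqrt\beta$ along $D$) really produces only the single factor $(-\beta t)^{-(1+\frac1n)/2}$ per derivative; here one uses the curvature bound of Lemma~\ref{lemma curvature} and the bounded geometry of the rescaled model \eqref{eq:10}. Third, absorbing the derivatives of the cut-off, which carry only negative powers of $|t_\beta|$ --- and it is at this point that the location $t_\beta=-\beta^{-1+\mu}$ of the gluing with $\mu\in(0,1)$ is used: $\mu>0$ forces $-\beta t\asymp\beta^\mu\to0$, so that $\delta\varphi$ is genuinely lower order, while $\mu<1$ forces $-t\asymp\beta^{-1+\mu}\to\infty$, so that the exponential error terms in \eqref{eq:16} are harmless.
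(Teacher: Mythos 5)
Your proof is correct and follows essentially the same route as the paper: on the inner range everything comes from the complex-structure discrepancy $\phi$, estimated via Proposition~\ref{prop:extremal-discs}, and on the gluing range you exploit the cancellation of the leading term $c_n(-\beta t)^{1+\frac1n}$ between the two potentials together with the fact that each covariant derivative costs $(-\beta t)^{-\frac12(1+\frac1n)}$.

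One small expository point on the inner range: you say you would ``expand $\Phi^*\omega_\beta-\omega_{\beta,L}$ to first order in $\phi$,'' but if you carry this out you find the Kähler \emph{forms} agree exactly (not just to first order), because the potential $\varphi_{\beta,L}$ depends on $t$ alone and $J_X\,dt=J_L\,dt=2\eta$, so $dJ_X\,d\varphi_{\beta,L}=dJ_L\,d\varphi_{\beta,L}$. The entire discrepancy therefore sits in the \emph{metrics}: $g_\beta-g_{\beta,L}=\omega_{\beta,L}(\cdot,(J_X-J_L)\cdot)$, which is precisely $O(\phi)$. Your estimate of $\phi$, and your conclusion, are correct; it is only the phrase ``form difference is first order in $\phi$'' that should be replaced by this exact identity (it is also exactly what makes the choice of $\Phi$ by extremal discs, rather than an exponential map, pay off here). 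Everything else---the weight count on the cut-off, the derivative cost, the role of $\mu\in(0,1)$ in making both the exponential error $e^{-\varepsilon\sqrt{-t}}$ and the polynomial error $(-\beta t)^{2(1+\frac1n)}$ harmless---matches the paper.
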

\begin{proof}
  The main point here is the uniformity with respect to $\beta$. For $t\leq2t_\beta$, we have the same potential $\varphi_{\beta,L}$ but with respect to two different complex structures, that of $L$ and of $X$, that we shall denote $J_L$ and $J_X$. It follows from Proposition~\ref{prop:extremal-discs} that $J_X-J_L$ vanishes on the vertical directions of $L$, and reduces to an endomorphism of $\ker \eta$. Since on $t\leq 2t_\beta$ both Kähler forms have potential $\varphi_{\beta,L}(t)$, and $J_Ldt=J_Xdt=2\eta$, it follows that actually $\omega_{\beta,L}=\frac12 dJ_Ld\varphi_{\beta,L}$ coincides with $\omega_\beta=\frac12 dJ_Xd\varphi_{\beta,L}$ on $t\leq 2t_\beta$. Therefore $g_\beta-g_{\beta,L}=\omega_{\beta,L}(\cdot,(J_X-J_L)\cdot)$, so estimating $g_\beta-g_{\beta,L}$ on this region is the same as estimating $J_X-J_L$.

Since $J_X-J_L$ vanishes on $D$, it follows from formula (\ref{eq:9}) that
  \begin{equation}
|J_X-J_L|_{g_{\beta,L}} = O(e^{\frac t2})\label{eq:27}
\end{equation}
uniformly in $\beta$, since the factor $-\beta \varphi_1'(\beta t)$  in front of $\theta_D$ does not change the norm of the endomorphisms. The covariant derivatives include terms $\frac1{\beta\sqrt{\varphi_1''(\beta t)}} \frac \partial{\partial t}$ and $\frac1{\sqrt{-\beta \varphi_1'(\beta t)}} \frac \partial{\partial x}$ (for $x$ coordinate on $D$). From the behaviour of $\varphi_1$ given in (\ref{t - infty 2}) it follows that the worst coefficient introduced by a covariant derivative is $\beta^{-1} e^{-\beta \frac t2}$. As a result, for any $\varepsilon>0$, we have for $\beta$ small enough and $t\leq 2t_\beta = 2\beta^{-1+\mu}$, uniformly in $\beta$,
\begin{equation}
  \label{eq:29}
  |\nabla^j_{g_{\beta,L}}(J_X-J_L)|_{g_{\beta,L}} = O(e^{(\frac 12-\varepsilon)t}).
\end{equation}

Now pass to the region $2t_\beta \leq t \leq \frac12 t_\beta$. Here we have
\[ \varphi_\beta = \beta^{1+\frac 1n} \varphi_{TY}(t) + \chi(\tfrac t{t_\beta}) \big(\varphi_{\beta,L}(t) - \beta^{1+\frac 1n} \varphi_{TY}(t) \big), \]
with
\begin{align*}
  \varphi_{TY}(t) & = \big(\tfrac{-nt}{n+1}\big)^{1+\frac1n} + \psi, \quad \psi=O(e^{-\varepsilon\sqrt{-t}}), \\
  \varphi_{\beta,L}(t) & = \big(\tfrac{-\beta nt}{n+1}\big)^{1+\frac1n} + O((-\beta t)^{2(1+\frac1n)}).
\end{align*}
(The second line is actually a complete expansion in powers of $(-\beta t)^{1+\frac1n}$).
Since $t_\beta=-\beta^{-1+\mu}$ goes to $-\infty$, the term coming from $\psi$ is negligible and we obtain
\[ \beta^{1+\frac 1n} \varphi_{TY}(t) - \varphi_{\beta,L}(t)  = O\big((-\beta t)^{2(1+\frac1n)}\big).\]
The Kähler form $\omega_{\beta,L}$ is asymptotic to the Tian-Yau form
\[ \beta^{1+\frac1n} \omega_{TY,L} = \beta^{1+\frac1n} (\tfrac n{n+1})^{\frac1n} \big( \tfrac1n (-t)^{-1+\frac1n} dt\wedge\eta + (-t)^{\frac1n} \theta_D \big). \]
Therefore we have
\[ \big|\nabla^j(\beta^{1+\frac 1n} \varphi_{TY}(t) - \varphi_{\beta,L}(t))\big|_{\beta^{1+\frac 1n}g_{TY,L}}=O\big((-\beta t)^{(2-\frac j2)(1+\frac1n)}\big).\]
On the other hand, $|\partial_t^j(\chi(\frac t{t_\beta}))| = O(t_\beta^{-j}) = O(t^{-j})$ so we have the same estimate on the derivatives of $\chi(\frac t{t_\beta})$:
\[ \big|\nabla^j(\chi(\tfrac t{t_\beta}))\big|_{\beta^{1+\frac 1n}g_{TY,L}}=O\big((-\beta t)^{-\frac j2(1+\frac1n)}\big).\]
Since $\varphi_\beta-\varphi_{\beta,L}= (1-\chi(\frac {\cdot}{t_\beta})) \left[ \beta^{1+\frac 1n}\varphi_{TY}-\varphi_{\beta,L}\right]$ we eventually obtain
\begin{equation}
 \big| \nabla^j(\varphi_{\beta}(t) - \varphi_{\beta,L}(t)) \big|_{\beta^{1+\frac 1n}g_{TY,L}}  = O\big((-\beta t)^{(2-\frac j2)(1+\frac1n)}\big).\label{eq:28}
\end{equation}
Since the difference $J_X-J_L$ is exponentially small, differentiating the estimate (\ref{eq:28}) gives the lemma.
\end{proof}

We will solve the Kähler-Einstein equation $\Ric(\omega_\beta+i \partial \dbar \varphi)=\omega_\beta+i \partial \dbar \varphi$ under the form
\begin{equation}
 P_\beta (\varphi) := \log \frac{(\omega_\beta+i \partial \dbar \varphi)^n}{i^{n^2}\Omega \wedge \overline \Omega} + (\varphi_\beta + \varphi) - C_\beta = 0 \label{eq:21}
\end{equation}
where the constant $C_\beta$ is the constant obtained for the model Calabi metric $g_{\beta,L}$, that is $C_\beta=C_1+(n+1)\log \beta$. One can calculate $C_1=-\log(n+1)$, in accordance with (\ref{eq:26}) when one checks that the Tian-Yau metric must be the limit of $\frac{\omega_\beta}{\beta^{1+\frac1n}}$ when $\beta\rightarrow 0$.
We can now estimate the initial error term:
\begin{lem}
 For any $\varepsilon>0$, one has for $\beta$ small enough, uniformly with respect to $\beta$:
\begin{equation}
  \label{eq:22}
  |\nabla^j_{g_\beta}P_\beta (0)|_{g_\beta} =
  \begin{cases}
    O(e^{(\frac 12-\varepsilon)t}) & t \leq 2t_\beta, \\
    O((-\beta\tilde t)^{(1-\frac j2)(1+\frac 1n)}) & \tilde t\geq 2 t_\beta.
  \end{cases}
\end{equation}
\end{lem}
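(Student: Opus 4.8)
The plan is to estimate $P_\beta(0)$ region by region in $t$, using the explicit shape of the glued potential $\varphi_\beta$ from \eqref{eq:20} and two inputs already available: Proposition~\ref{prop:extremal-discs} on the extremal-disc fibration, and the comparison estimate of Lemma~\ref{lem:est-diff-g}. Throughout I freely pull functions and tensors near $D$ back by the diffeomorphism $\Phi$, which is harmless, and I use that $g_\beta$ and $g_{\beta,L}$ are uniformly equivalent up to all covariant derivatives by Lemma~\ref{lem:est-diff-g}.

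On the region $t\le 2t_\beta$ one has $\chi(t/t_\beta)\equiv 1$, so $\varphi_\beta=\Phi_*\varphi_{\beta,L}$, and $\Phi^*\omega_\beta=\omega_{\beta,L}$ as $2$-forms---this is precisely the identity established inside the proof of Lemma~\ref{lem:est-diff-g}, using that $J_X-J_L$ vanishes on the vertical directions and that $\varphi_{\beta,L}$ is a function of $t$ alone. By construction $\Ric\,\omega_{\beta,L}=\omega_{\beta,L}$, so integrating a $\partial\dbar$ shows $\log\frac{\omega_{\beta,L}^n}{i^{n^2}\Omega_L\wedge\overline{\Omega_L}}+\varphi_{\beta,L}$ equals the constant $C_\beta=C_1+(n+1)\log\beta$ of \eqref{eq:21}. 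Since $\Phi^*\Omega=v\,(1-\phi)^*\Omega_L$ with $v|_D=1$ and $\phi|_D=0$, the linear-in-$\phi$ contributions to $i^{n^2}\Phi^*(\Omega\wedge\overline\Omega)$ cancel for bidegree reasons, so $i^{n^2}\Phi^*(\Omega\wedge\overline\Omega)=|v|^2(1+O(|\phi|^2))\,i^{n^2}\Omega_L\wedge\overline{\Omega_L}$ and hence $\Phi^*P_\beta(0)=-\log\bigl(|v|^2(1+O(|\phi|^2))\bigr)=O(e^{t/2})$ on this region. For the covariant derivatives one uses that $v$ and $\phi$ are $\beta$-independent, holomorphic along the extremal discs, and vanish to first order on $D$; differentiating them in $g_{\beta,L}$ costs at most the factor $\beta^{-1}e^{-\beta t/2}$ identified in the proof of Lemma~\ref{lem:est-diff-g}, which is $\le e^{-\varepsilon t}$ once $\beta$ is small because $-t\ge -2t_\beta=2\beta^{-1+\mu}$. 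This gives the first line of \eqref{eq:22}.

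On the gluing region $2t_\beta\le t\le\tfrac12 t_\beta$, the same pullback and the Calabi identity give $\Phi^*P_\beta(0)=\log\frac{(\Phi^*\omega_\beta)^n}{\omega_{\beta,L}^n}+(\varphi_\beta-\varphi_{\beta,L})-\log\bigl(|v|^2(1+O(|\phi|^2))\bigr)$. The last term and its derivatives are $O(e^{(\frac12-\varepsilon)t})$ by the previous paragraph, hence super-exponentially small here as $-t\ge-\tfrac12 t_\beta$; the middle term is $O((-\beta t)^{(2-\frac j2)(1+\frac1n)})$ by \eqref{eq:28} (using $g_{\beta,L}\sim\beta^{1+\frac1n}g_{TY,L}$ there), of lower order; and the first term has $j$-th derivative $O((-\beta t)^{(1-\frac j2)(1+\frac1n)})$ in $g_{\beta,L}$, being controlled by the corresponding derivative of $g_\beta-g_{\beta,L}$ through Lemma~\ref{lem:est-diff-g}. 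The first term dominates, yielding the second line of \eqref{eq:22} with $\tilde t=t$. Finally, on $t\ge\tfrac12 t_\beta$---in particular on $X\setminus U_L$---one has $\chi(t/t_\beta)\equiv 0$, so $\omega_\beta=\beta^{1+\frac1n}\omega_{TY}$ and $\varphi_\beta=\beta^{1+\frac1n}\varphi_{TY}$; by \eqref{eq:26} together with $C_\beta=C_1+(n+1)\log\beta$ and $C_1=-\log(n+1)$, the logarithmic term of $P_\beta(0)$ cancels $C_\beta$ exactly, so $P_\beta(0)=\beta^{1+\frac1n}\varphi_{TY}$. A constant rescaling gives $|\nabla^j_{g_\beta}P_\beta(0)|_{g_\beta}=\beta^{(1-\frac j2)(1+\frac1n)}|\nabla^j_{g_{TY}}\varphi_{TY}|_{g_{TY}}$, which is $O(1)$ away from $D$ and, by the asymptotics \eqref{eq:16} and the shape \eqref{eq:19} of $g_{TY,L}$ (the leading part of $\varphi_{TY}$ scales like the square of the distance to $D$, and $\psi$ decays faster than any power), equals $O((-t)^{(1-\frac j2)(1+\frac1n)})$ near $D$. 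Since $\tilde t=t$ for $t\le-2$ while $t$ and $\tilde t$ stay bounded on $-2\le t\le-1$, this is the second line of \eqref{eq:22} for $\tilde t\ge2t_\beta$; assembling the three estimates proves the lemma.

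The step I expect to be the main obstacle is the first region $t\le 2t_\beta$: the point is not the pointwise bound but the uniformity in $\beta$ of the $j$-th derivative estimates for the discrepancy between $J_X$ and $J_L$ (equivalently between $\Omega$ and $\Omega_L$), since the reference metric $g_{\beta,L}$ itself degenerates as $\beta\to0$---and at genuinely different rates in the cusp-like part near $D$ and the Tian--Yau-like part at moderate $t$---so covariant differentiation inflates the exponentially small errors by negative powers of $\beta$. This is reconciled by the confinement to $t\le2t_\beta=-2\beta^{-1+\mu}$: there $|t|$ is bounded below by a fixed positive power of $1/\beta$, so the gain $e^{-\varepsilon t}=e^{\varepsilon|t|}$ beats any such power of $\beta$ for $\beta$ small, and the bound holds with a $\beta$-independent constant.
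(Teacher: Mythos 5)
Your proof is correct and takes essentially the same approach as the paper's terse three-bullet version: the same three-region decomposition, driven by the Calabi identity for $\omega_{\beta,L}$, the extremal-disc comparison between $\Phi^*\Omega$ and $\Omega_L$ from Proposition~\ref{prop:extremal-discs}, Lemma~\ref{lem:est-diff-g}, and the Tian--Yau equation~\eqref{eq:26}. Your fuller write-up is a faithful expansion of the paper's argument; in particular the exact cancellation $\Phi^*P_\beta(0)=-\log\bigl(|v|^2(1+O(|\phi|^2))\bigr)$ on $t\le 2t_\beta$ makes precise what the paper leaves implicit in the phrase ``$\omega_\beta$ and $\Omega$ differ from $\omega_{\beta,L}$ and $\Omega_L$ by an exponentially decreasing term.''
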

\begin{proof}
  This follows from the estimates in Lemma~\ref{lem:est-diff-g}:
  \begin{itemize}
  \item The form $\omega_{\beta,L}$ solves $\om_{\beta,L}^n=C_\beta e^{\varphi_{\beta,L}}i^{n^2}\Omega_L\wedge \overline \Omega_L$, therefore on $t\leq2t_\beta$, since $\omega_\beta$ and $\Omega$ differ from $\omega_{\beta,L}$ and $\Omega_L$ respectively  by an exponentially decreasing term, we obtain the estimate of the lemma.
  \item On $\tilde t\geq\frac12t_\beta$ the Tian-Yau form $\omega_{TY}$ is Ricci flat and solves \eqref{eq:26}, so the error term in (\ref{eq:22}) is $\varphi_\beta=O((-\beta\tilde t)^{1+\frac 1n}) $ (and the corresponding estimates for the derivatives).
  \item On the gluing region $2t_\beta\leq t\leq\frac12 t_\beta$, the estimates from lemma \ref{lem:est-diff-g} are still sufficient to prove (\ref{eq:22}).
  \end{itemize} 
\end{proof}

\subsection{Convergence of $(X,g_\beta)$ and its rescalings}
\label{sec:geometry}

In this section, we determine all possible Gromov-Hausdorff limits of our model space $(X,\ep_\beta^{-1}g_\beta, p)$ where $p$ is a fixed point and  $(\ep_\beta)_\beta$ is non-decreasing family of positive numbers. At the very end of the paper, we will see that the results of this section continue to hold for the Kähler-Einstein metric $\hat g_\beta$, cf Lemma~\ref{rescaling}.

\subsubsection*{Case A. Convergence of $(X,g_\beta,p)$ and limit renormalized measure.}

In the following, we will use the variable $u=\beta t$, so that the gluing region is $-2\beta^\mu \leq u \leq -\frac12 \beta^\mu$; we set $u_\beta:=-\beta^{\mu}$. For later purposes, we extend the variable $u$ to the whole $X$ by setting $\tilde u=\beta\tilde t$, so that $\tilde u=\beta$ in $X\setminus U_L$. We have the form (\ref{eq:10}) for the metric, which we rewrite here:
\[g_{\beta, L}= 2\varphi_1''(u) (\tfrac14 du^2 + \beta^2\eta^2) - \beta \varphi_1'(u) g_D.\]
Next, we have by Lemma~\ref{lem:est-diff-g} the following estimates 
\begin{equation}
\label{appro}
g_\beta= 
\begin{cases}
g_{\beta, L}+ O(e^{-\beta^{\mu-1}})& \mbox{ if } \quad u\le 2 u_\beta, \\
g_{\beta, L}+ O(\beta^{\mu(1+\frac 1n)}) & \mbox{ if } \quad 2u_\beta \le u\le \frac 12 u_\beta, \\
 \beta^{1+\frac 1n} g_{\rm TY} & \mbox{ if } \quad  u\ge \frac 12 u_\beta.
\end{cases}
\end{equation}
At this point, we can already see that the size of $(u\ge 2u_\beta)$ with respect to $g_\beta$ goes to zero, hence that zone does not contribute to the limit. 

To go further, it is convenient to introduce the moment map $x=-\varphi_1'(u)\in (0,1)$ and then set $\cos(s)=x^{\frac{n+1}2}$ for $s\in (0, \frac \pi 2)$. Relying on the identities $x^{n+1}=1-e^x$ and $\varphi_1''= \frac{1}{n+1}\frac{1-x^{n+1}}{x^{n-1}}$, we get 
\begin{equation}
\label{g s}
g_{\beta, L}= \frac 2{n+1} ds^2+ \frac 2{n+1} \frac{\sin^2 s}{\cos(s)^{2\frac{n-1}{n+1}}} \beta^2 \eta^2 + \beta \cos(s)^{\frac 2{n+1}} g_D.
\end{equation}
Given \eqref{eq:23}, we easily see that $(X,g_\beta,p)$ converges to 
\[(X_\infty, g_\infty,p_\infty)=
\begin{cases}
([0,\tfrac\pi 2], \tfrac 2{n+1} ds^2,0 ) & \mbox{if }  p\in D, \\
([0,\tfrac\pi 2], \tfrac 2{n+1} ds^2,1 )&  \mbox{otherwise.} 
\end{cases}
\]
Moreover, the normalized measures $\nu_\beta=\frac{\mathrm{dvol}_{g_\beta}}{\mathrm{vol}_{g_\beta}(X)}$ converge to the measure $\nu_\infty=c\sin s \cos(s)^{\frac{n-1}{n+1}}ds$ for some $c>0$. In other words, 
\[\nu_\infty = d(-\cos^{\frac{2n}{n+1}}s).\]
The asymptotic behavior of $(X,g_\beta)$ is summarized by the picture below. \\

\begin{center}
\begin{tikzpicture}

\draw[dotted, <->] (-0.4,2.5) -- (-0.4,-2.5) node[midway, left] {\tiny $\sqrt \beta$}; 
\draw[dotted, <->] (6.3,-0.5) -- (6.3,0.5) node[midway, right] {\tiny $\sqrt {\beta^{1+\frac 1n}}$}; 

\draw[dotted] (-0.2,1) -- (1.5,1.7);
\draw[dotted] (-0.2,1) -- (1.5,0.3);

\draw[dotted] (3.5,-3) -- (3.5,-1.2);
\draw[decorate, decoration= {brace, mirror}] (3.5,-0.6) -- (6,-0.6);
\draw[dotted] (3.5,-1.2) .. controls (3.525,-1.12) and (3.575,-1.11) .. (3.6,-1.1);
\draw[dotted] (3.6,-1.1) -- (4.675, -1.1);
\draw[dotted] (4.675,-1.1) .. controls (4.7,-1.09) and (4.725,-1.08) .. (4.75, -1);
\draw[dotted] (4.75,-0.75) -- (4.75, -1);

\draw[|-|] (-0.2,-3) -- (3.5,-3) node[midway, below] {\small $s$};
\node[align=left] at (-0.2,-3.5) {\small $0$};
\node[align=right] at (3.5,-3.5) {\small $\frac \pi2$};
\node[align=left, Fuchsia]  at (4.8, 0.8) {\small $ \beta^{1+\frac 1n} g_{TY}$};
\node[align=left, Fuchsia]  at (2.2, 2.2) {\small $ g_{\beta,L}$};

\node[gray, align=left]  at (0.3,0.98) {\tiny $ 2\pi \beta$};

\draw (0,2.5) .. controls (1,2.25) and (2.5,2) .. (3,1.2);
\draw (3,1.2) .. controls (3.3,0.8) and (3.7,0.4) .. (4,0.2);
\draw  (4,0.2) .. controls (4.2,0.15) and (4.4,0.15) ..  (4.6,0.2);
\draw  (4.6,0.2) .. controls (4.9,0.25) and (5.1,0.45) ..  (5.4,0.5);
\draw  (5.4,0.5) --  (5.5,0.5) ;
\draw (-0.2,2.5) -- (-0.2,-2.5) ;

\draw (0,-2.5) .. controls (1,-2.25) and (2.5,-2) .. (3,-1.2);
\draw (3,-1.2) .. controls (3.3,-0.8) and (3.7,-0.4) .. (4,-0.2);
\draw  (4,-0.2) .. controls (4.2,-0.15) and (4.4,-0.15) ..  (4.6,-0.2);
\draw  (4.6,-0.2) .. controls (4.9,-0.25) and (5.1,-0.45) ..  (5.4,-0.5);
\draw  (5.4,-0.5) --  (5.5,-0.5);

\draw (5.5,0.5) arc (90:-90:0.5) ;
\draw (5.45,0.15) arc (90:-90:0.15);
\draw (5.55,0.1) arc (90:270:0.1);
\draw[gray] (0.01,0.91) arc (-40:45:0.13) ;

\draw[cyan] (0.75,0) ellipse (0.1 and 2.32);
\draw[cyan] (1.2,0) ellipse (0.17 and 2.2);
\draw[cyan] (3.2,0) ellipse (0.1 and 0.93);

\draw[red, rotate=-4] (0.68,1.05) ellipse (0.075 and 0.38);
\draw[red, rotate=-11] (0.95,1.26) ellipse (0.068 and 0.58);
\draw[red, rotate=-11] (3.155,0.55) ellipse (0.061 and 0.3);
\end{tikzpicture}
\end{center}

\subsubsection*{Case B. Convergence of $(X,\ep_\beta^{-1} g_\beta,p)$ with $\ep_\beta \to 0$.}
First we consider the case $p\in D$. In other words, we work near $s=0$. Clearly, the zone $(u\ge 2 u_\beta)$ does not contribute to the limit since it escapes any ball centered at $p$ of fixed radius. From \eqref{appro}-\eqref{g s} we get  
\[g_{\beta}\simeq \frac 2{n+1} (ds^2+ s^2 \beta^2 \eta^2) + \beta  g_D\]
and the limit of $(X,\ep_\beta^{-1}g_\beta,p)$ when $p\in D$ is
\[ \begin{cases}
 (\R_+, dt^2,0) & \mbox{if } \quad \beta \ll \ep_\beta \ll 1, \\
 (\R_+\times D, dt^2+g_D, (0,p)) & \mbox{if } \quad \ep_\beta=  \beta, \\
  (\R_+\times \mathbb C^{n-1}, dt^2+g_{\mathbb C^{n-1}}, (0,0)) & \mbox{if } \quad \ep_\beta\ll  \beta. \\
\end{cases}
\]
where the $\mathbb R_+$ factor comes from the rescaling $t=\ep_\beta^{-1/2}s$. \\

Second we consider the case when $p\in X\setminus D$. In other words, we work near $s=\frac \pi 2$. It is convenient to set $\sigma= \frac \pi 2-s$. 

Let us first consider the case where $\ep_\beta \le \beta^{ 1+\frac 1n}$. In that case,  the "Tian-Yau" zone $(u\ge \frac 12 u_\beta)$ has size of order $\ep_\beta^{-\frac 12}\beta^{\frac \mu 2(1+\frac 1n)} \to +\infty$ with respect to $\ep_\beta^{-1} g_\beta$ hence the limit is that of $(X\setminus D, \ep_{\beta}^{-1}\beta^{1+\frac 1n}g_{\rm TY},p)$. 

From now on, one can assume that  $\ep_\beta \gg  \beta^{ 1+\frac 1n}$. Therefore, the compact part is contracted and everything is concentrated near the zone where $u\sim 0$, where the asymptotics \eqref{eq:11bis} hold for $g_{\beta, L}$ as well as for $\beta^{1+\frac 1n}g_{\rm TY}$. Given \eqref{appro}, we get in terms of the variable $\sigma \simeq (-u)^{\frac{n+1}{2n}}$
\begin{equation}
\label{asymp 2}
g_{\beta}\simeq \frac 2{n+1} (d\sigma^2+ \frac{ \beta^2}{\sigma^{2\frac{n-1}{n+1}}} \eta^2) + \beta \sigma^{\frac{2}{n+1}}  g_D.
\end{equation}
In a ball centered at $p$ of fixed radius for $\ep_\beta^{-1} g_\beta$, we have $\ep^{-\frac12}\sigma \lessapprox 1$ (since $\ep_\beta^{-1/2}\sigma(p)\ll 1$), so that \[\ep_\beta^{-1}\beta \sigma^{\frac 2 {n+1}} \lessapprox (\ep_\beta^{-1}\beta^{1+\frac 1n})^{\frac{n}{n+1}}   \ll 1.\]
Moreover, we have $\sigma \gtrapprox \beta^{\frac{n+1}{2n}}$ (since $u \le -\beta$), hence
\[\ep_\beta^{-1}\frac{\beta^2}{ \sigma^{2\frac {n-1}{n+1}}}\lessapprox \ep_\beta^{-1} \beta^{1+\frac 1n} \ll 1.\] 
From \eqref{asymp 2}, we can deduce that in this case the limit is just a ray. \\

In summary, if $p\in X\setminus D$ the limit of $(X,\ep_\beta^{-1}g_\beta,p)$ is
\[ \begin{cases}
   (\R_+, dt^2,0) & \mbox{if } \quad \beta^{ 1+\frac 1n} \ll  \ep_\beta \ll 1  , \\
  (X\setminus D, g_{\rm TY},p) & \mbox{if } \quad \ep_\beta =\beta^{ 1+\frac 1n} , \\
\end{cases}
\]
and for $\ep_\beta \ll \beta^{ 1+\frac 1n}$ we only have the trivial bubble $\mathbb C^{n}$.

\section{Uniform Schauder estimate for cones}
\label{sec:unif-scha-estim}

\subsection{Preliminaries}
In this section, we consider the flat Kähler metric on $\mathbb C^*\times \mathbb C^{n-1}$ with cone angle $2\pi \beta$ along $D:=(z_1=0)$, that is \[dd^c (|z_1|^{2\beta}+\|z'\|^2)=\beta^2|z_1|^{2(\beta-1)}idz_1\wedge d\bar z_1+\sum_{j=2}^n idz_j\wedge d\bar z_j\] where $z'=(z_2, \ldots, z_n)$. Using the real coordinates $r:=|z_1|^\beta, \theta:=\mathrm{arg}(z_1)$, the Riemannian metric associated that Kähler metric is \[\gk:=(dr^2+\beta^2r^2d\theta^2)+g_{\mathbb C^{n-1}}.\] 
It will be convenient to introduce the notation \[\wh g_\beta:=dr^2+\beta^2r^2d\theta^2\] for the one-dimensional complex cone with cone angle $2\pi \beta$ at $0\in \mathbb C$. \\

{\it On balls.} 

\noindent
We are interested in the behavior of $\gk$ near the divisor when $\beta$ approaches $0$. When $n=1$, the zone $\{C\ge |z_1|\ge 1/C\}$ is collapsed onto a point which is at distance exactly one of the origin. This means that the asymptotic geometry is concentrated extremely close to the divisor. In the following, we will only consider with points $p$ at distance at most $\frac 12$ from the origin with respect to $\gk$; in particular, $|z_1(p)|\le 2^{-1/\beta}$ converges exponentially fast to zero.

If $p\in \mathbb C^n$ and $\rho>0$, we denote by $B_p(\rho)$ the geodesic ball of radius $\rho$ centered at $p$, with respect to $\gk$. If $p\in D$, then $B_p(\rho)=\{q=(r_q,\theta_q,z'); r_q^2+\|z'-z'(p)\|^2<\rho^2\}$. 

In the following, we set $B_\beta:=B_0(\frac 12)$ for the ball centered at the origin with radius $1/2$ with respect to $\gk$. It will be convenient to also set $B_\beta':=B_0(\frac 14)$. As explained above, we will exclusively focus on the behavior of $\gk$ on $B_\beta$. Punctured balls $B^*$ are defined as $B\setminus D$.\\

We decompose the gradient of $\gk$ as  $\nabla^{\gk}=(D',D'')$ where $D'=(D_1, D_2)$ with $D_1=\d_r, D_2=\frac{1}{\beta r}\d_\theta$ and $D''= (D_3, \ldots, D_{2n}) $ where $D_{2j-1}=\d_{x_j}, D_{2j}=\d_{ y_j}$ if $z_j=x_j+iy_j$. The laplacian is
\[\Delta_{\gk}=\underbrace{\d^2_{r}+\frac1r\d_r+\frac{1}{\beta^2r^2}\d^2_\theta}_{=\Delta_{\wh g_\beta}}+\Delta_{\mathbb C^{n-1}}.\]
In complex coordinates, the first order derivatives are given by  \[\d_r=\frac{1}{\beta|z_1|^\beta}(z_1\d_{z_1}+\bar z_1\d_{\bar z_1}), \quad \text{and} \quad \frac1{\beta r}\d_\theta=\frac{i}{\beta |z_1|^\beta}(z_1\d_{z_1}-\bar z_1\d_{\bar z_1})\] 
while the second order derivatives are 
\begin{equation}
\label{dr2}
\d_r^2=\frac{1}{\beta^2|z_1|^{2\beta}}\left(2|z_1|^2\d^2_{z_1\bar z_1}+(1-\beta)(z_1\d_{z_1}+\bar z_1\d_{\bar z_1})+(z_1^2\d_{z_1}+\bar z_1^2\d_{\bar z_1})\right)
\end{equation}
and 
\begin{equation}
\label{dt2}
\frac1{\beta^2r^2}\d^2_\theta=\frac{-1}{\beta^2|z_1|^{2\beta}}\left(z_1^2\d^2_{z_1}+ z_1\d_{z_1}-2|z_1|^2 \d^2_{z_1\bar z_1}+ \bar z_1\d_{\bar z_1}+\bar z_1^2\d_{\bar z_1}\right).
\end{equation}

For any real number $\alpha \in (0,1)$, we define the $\mathcal C^{\alpha}$ norm with respect to $\gk$ in the classical way. That is, if $p\in B_\beta$  i.e. if $f\in \mathcal C^0(B_\beta)$, then 
\[\|f\|_\alpha= \sup_{x,y \in B_\beta}\frac{|f(x)-f(y)|}{d_{\gk}(x,y)^\alpha}.\]
We defined the $\mathcal C^{2,\alpha}$ norm of a function $f$ on $B_\beta$ as
\[\|f\|_\alpha=\|f\|_{0}+\|\nabla^{\gk}f\|_0+\sum_{i=1}^{2n}\sum_{j=3}^{2n} \|D_iD_jf\|_\alpha+\|\Delta_{\wh g_\beta} f\|_\alpha.\]
This means that we do not require a control on the derivatives $D_1^2f$, $D_2^2f$, $D_1D_2f$, $D_2D_1f$, following Donaldson \cite{Don}. 


The main result of this section is the following
\begin{thm}[Weak Schauder estimate]
\label{Schauder conique}
Let $u\in L^{\infty}(B_\beta)$ solve $\Delta_{\gk}u=f$ for some $f\in \mathcal C^{\alpha}(B_\beta)$. Then $u\in \mathcal C^{2,\alpha}(B_\beta)$ and there exists a constant $C=C(n,\alpha, \delta)$ such that for all $\beta\in (0,1-\delta]$ one has 
\[\|u\|_{\mathcal C^{2,\alpha}(B_\beta')} \le C \left( \|f\|_{\mathcal C^{\alpha}(B_\beta)}+\|u\|_{\mathcal C^0(B_\beta)}\right).\]
\end{thm}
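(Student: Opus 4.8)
The plan is to argue by contradiction through a blow-up/compactness scheme; the only genuinely new feature compared with the fixed-angle estimate of \cite{Don} is the uniformity as $\beta\to0$, and the whole difficulty sits near the divisor $D$ (away from $D$ the estimate is a routine interior estimate, after possibly passing to a local cover of the $\theta$-circle to undo the collapse of the short circle, since there $\gk$ is uniformly quasi-isometric to the flat metric). One knows a priori, for each fixed $\beta$, that $u\in\mathcal C^{2,\alpha}(B_\beta')$ with a bound possibly depending on $\beta$ by \cite{Don}, which makes legitimate the normalization $\|u\|_{\mathcal C^{2,\alpha}(B_\beta')}=1$ with $\|f\|_{\mathcal C^{\alpha}(B_\beta)}+\|u\|_{\mathcal C^0(B_\beta)}$ as small as we wish; the point is to make the bound uniform. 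Assuming this fails, one gets a sequence $\beta_k$ with solutions $u_k$ of $\Delta_{\bar g_{\beta_k}}u_k=f_k$ for which the $\mathcal C^0$ and $\mathcal C^1$ norms and $\|f_k\|_{\mathcal C^\alpha}$ tend to $0$ while one of the Hölder seminorms entering $\|\cdot\|_{\mathcal C^{2,\alpha}}$ — one of the $\|D_iD_ju_k\|_\alpha$ with $j\ge3$, or $\|\Delta_{\wh g_{\beta_k}}u_k\|_\alpha$ — stays bounded below after a standard point-selection and rescaling at a scale comparable to the distance to $D$.

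The next step is to list the pointed Gromov--Hausdorff limits of the rescaled spaces. Using that the one-dimensional cone $\wh g_\beta$ is scale invariant and that the distance to $D$ after rescaling stays bounded, the limit is one of: (a) the fixed cone $\C_{\beta_\infty}\times\C^{n-1}$, when $\beta_k\to\beta_\infty\in(0,1-\delta]$; (b) Euclidean $\R^{2n}$, when the blow-up centre escapes to $+\infty$ in the cone direction with the circle of bounded length; (c) the collapsed model $\R_+\times\C^{n-1}$ carrying the degenerate operator $\mathcal L:=\d_r^2+\tfrac1r\d_r+\Delta_{\C^{n-1}}$, arising as the limit of $\Delta_{\bar g_{\beta_k}}$ when the circle directions shrink, i.e. in the regime $\beta_k\to0$ (which does not occur in \cite{Don}). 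To pass to the limit I would establish uniform interior $\mathcal C^{2,\alpha}$ bounds on the rescaled $u_k$ away from the singular locus $\{r=0\}$; when the $S^1$-fibres collapse this requires passing to the $\R$-cover of the angular variable, on which $\gk$ becomes a flat, $\beta$-independent metric (only the $2\pi\beta_k$-periodicity of the functions recording $\beta_k$), so standard flat interior estimates apply with constants independent of $k$. Extracting a subsequence one obtains a globally defined limit $u_\infty$ on the model space, solving the corresponding homogeneous equation, with bounded Hessian — hence of at most quadratic growth — and still carrying a nonzero Hölder seminorm of its good second derivatives.

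The last step is a Liouville theorem on each model. On $\R^{2n}$ this is classical; on $\C_{\beta_\infty}\times\C^{n-1}$ it is Donaldson's Liouville statement (separation of variables in the cone factor together with the quadratic growth bound force $u_\infty$ to be, up to an affine function, a quadratic polynomial in the smooth coordinates, so $\Delta_{\wh g_{\beta_\infty}}u_\infty$ and the mixed derivatives $D_iD_ju_\infty$ with $j\ge3$ are constant); on the collapsed model, a solution of $\mathcal L u_\infty=0$ with bounded Hessian extends, by radial symmetry in the first two variables, to a harmonic function of quadratic growth on $\R^2\times\C^{n-1}=\R^{2n}$, hence again a quadratic polynomial, so $\Delta_{\wh g}u_\infty=\d_r^2u_\infty+\tfrac1r\d_ru_\infty$ and the mixed good derivatives are constant. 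In every case all the Hölder seminorms appearing in $\|\cdot\|_{\mathcal C^{2,\alpha}}$ vanish on $u_\infty$, contradicting the normalization, and the theorem follows.

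I expect the main obstacle to be case (c), the genuinely collapsing regime: running the compactness argument when the $S^1$-fibres shrink (which forces one to work on the $\R$-cover of the $\theta$-circle, where the geometry is flat and $\beta$-independent, in order to get uniform local estimates) and correctly isolating the limit operator $\mathcal L$ and its Liouville theorem. It is precisely here that Donaldson's choice of norm — controlling only $\Delta_{\wh g_\beta}u$ and the mixed derivatives $D_iD_ju$ with $j\ge3$, never $D_1^2u$, $D_2^2u$, $D_1D_2u$, $D_2D_1u$ — is indispensable: these excluded components are exactly the ones that degenerate both as $r\to0$ in the cone and as $\beta\to0$, and the Liouville polynomials on the limit models kill precisely the components that are kept.
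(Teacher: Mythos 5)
You take a genuinely different route from the paper. The paper's proof follows Guo--Song \cite{GS}, modelled on Xu-Jia Wang's direct method: one introduces approximating functions $u_\kappa$ solving Dirichlet problems with constant right-hand side $f(p)$ on a nested chain of balls shrinking to $p$, and sums up their differences using gradient estimates for harmonic functions (a Cheng--Yau type bound via smooth approximations of the cone metric with nonnegative Ricci, supplemented by the explicit one-dimensional Green's function estimate of Lemma~\ref{gradient}, which is precisely what keeps the constants $\beta$-uniform as $\beta\to0$). Your proposal is instead a compactness/blow-up/Liouville scheme. Both methods exist for the fixed-angle conical Schauder estimate; both need new input to handle $\beta\to0$, and you correctly locate that input in the collapsed model $\bigl(\R_+\times\C^{n-1},\ \mathcal L=\partial_r^2+\tfrac1r\partial_r+\Delta_{\C^{n-1}}\bigr)$ and its Liouville statement. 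The paper's method has the advantage of tracking $\beta$ explicitly through a handful of elementary inequalities with no compactness to extract; yours, if completed, would be shorter and more conceptual.

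There are, however, gaps you would have to close. First, your list of blow-up limits is not exhaustive: when the blow-up centre escapes from $D$ (rescaled distance $r_k/d_k\to\infty$), the rescaled circle circumference $2\pi\beta_k r_k/d_k$ can tend to any $\ell\in[0,+\infty]$, producing besides $\R^{2n}$ also a flat cylinder $\R\times S^1_\ell\times\C^{n-1}$ (for $\ell\in(0,\infty)$) and the fully collapsed $\R\times\C^{n-1}\simeq\R^{2n-1}$ (for $\ell=0$). These admit Liouville theorems too, but the cylinder case needs a separate argument since quadratic-growth harmonic functions on a cylinder a priori include exponential/oscillatory modes, and ruling them out requires using the growth bound more carefully. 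Second, and more seriously, in the collapsed case you must argue that the nonzero Fourier modes along the collapsing $S^1$ cannot carry the extremal Hölder seminorm into the limit; otherwise the $\theta$-independent limit $u_\infty$ could be trivial and yield no contradiction. The paper handles exactly this kind of phenomenon by a maximum-principle estimate on Fourier modes in the collapsed Schauder estimate of section~\ref{sec:semi-local-schauder-1}; your sketch does not address it, and it is not automatic from the interior estimates on the $\R$-cover. Third, the point-selection and growth estimates ensuring a nontrivial limit are only sketched; one must be careful that Donaldson's weak norm only controls $\Delta_{\wh g_\beta}u$ and the mixed second derivatives, not all of $\nabla^2u$, so the extraction of the limit and the proof of its quadratic growth cannot invoke a full Hessian bound and need an interpolation argument compatible with this incomplete control.
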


The novelty of the result above relies on the uniformity of the "Schauder constant" $C$ above with respect to $\beta$ (say when $\beta\to 0$), since the result for fixed $\beta$ has been known for a while. It is initially due to Donaldson \cite{Don} and was later reproved and generalized via many different methods, cf \cite{GS, GY, BE}. 

We will follow the approach of Bin Guo and Jian Song \cite{GS}, itself based on an original and quite direct proof of the usual Schauder estimate by Xu-Jia Wang \cite{Wang06}. 

In what follows, we will systematically assume that $\beta<\frac 12$. 

\subsection{Gradient estimates}

In this section, we provide two types of gradient estimates for the metric $\gk$ that will be useful later. 

\begin{lem}
\label{gradient}
Assume that $n=1$. Let $u\in L^{\infty}(B_0(\rho))$, smooth outside $0$ solving $\Delta_{\wh g_\beta}u=f$. Then there exists a constant $C>0$ such that for any $r\le \frac{\rho}{2^\beta}$, one has
\[|\nabla^{\wh g_\beta}u|(r,\theta) \le C \Big[ \frac{1}{\beta}\left(\frac r{\rho}\right)^{\frac1\beta}\cdot \frac 1r  \sup_{B_0(\rho)} |u|+r \sup_{B_0(\rho)} |f|.\Big]\]
\end{lem}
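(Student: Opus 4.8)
The plan is to reduce to a standard one-dimensional elliptic estimate by a conformal/polar-coordinate change of variables that trades the cone angle parameter $\beta$ for a genuine angular variable. On $B_0(\rho)\subset\mathbb{C}$ with the cone metric $\wh g_\beta = dr^2+\beta^2 r^2 d\theta^2$, the key point is that $\Delta_{\wh g_\beta}=\d_r^2+\tfrac1r\d_r+\tfrac1{\beta^2 r^2}\d_\theta^2$ is, up to the harmless factor $\beta^{-2}$ in the last term, the ordinary Laplacian in coordinates $(r,\theta)$ \emph{except} that $\theta$ ranges over $[0,2\pi]$ rather than $[0,2\pi\beta]$. So I would first separate variables: expand $u(r,\theta)=\sum_{k\in\mathbb{Z}} u_k(r)e^{ik\theta}$ and $f(r,\theta)=\sum_k f_k(r)e^{ik\theta}$, so that each mode satisfies the ODE
\[
u_k''+\tfrac1r u_k'-\tfrac{k^2}{\beta^2 r^2}u_k = f_k.
\]
The homogeneous solutions are $r^{\pm |k|/\beta}$; the decaying one near $0$ is $r^{|k|/\beta}$, and the exponent $|k|/\beta$ is large (at least $1/\beta$ for $k\ne0$, and for $k=0$ the homogeneous solutions are $1$ and $\log r$). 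Boundedness of $u$ forces the coefficient of the growing-toward-zero solution to vanish, which is exactly where the factor $(r/\rho)^{1/\beta}$ in the statement will come from.

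Next I would write the explicit Green's representation for $u_k$ on $(0,\rho)$ with the boundary condition at $r=\rho$ and the boundedness condition at $r=0$: using variation of parameters with the fundamental pair $\{r^{|k|/\beta}, r^{-|k|/\beta}\}$ (resp.\ $\{1,\log r\}$ for $k=0$), one gets
\[
u_k(r) = u_k(\rho)\Big(\tfrac r\rho\Big)^{|k|/\beta} + \int_0^\rho G_k(r,s)\, f_k(s)\, s\, ds,
\]
with an explicit Poisson-type kernel $G_k$. Differentiating in $r$ and estimating, the first term contributes $\lesssim \tfrac{|k|}{\beta r}(r/\rho)^{|k|/\beta}|u_k(\rho)|$, and summing a geometric-type series in $k$ (here $(r/\rho)^{|k|/\beta}$ decays superfast since $r\le\rho/2^\beta$ gives $(r/\rho)^{1/\beta}\le 1/2$) collapses to $\lesssim \tfrac1{\beta r}(r/\rho)^{1/\beta}\sup|u|$; the Green's-kernel term is estimated crudely by $\sup|f|$ times $\int_0^r (\text{kernel})\, s\, ds \lesssim r$, uniformly in $k$ after summation — again the rapid decay in $|k|$ of the off-diagonal kernel makes the $\ell^2$/pointwise bookkeeping converge. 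Assembling $\nabla^{\wh g_\beta}u=(\d_r u,\tfrac1{\beta r}\d_\theta u)$ and using Parseval to pass from the modes $u_k$ back to $u$ (controlling $\sup_k|u_k(\rho)|$ and $\sup_k |f_k|$ by $\sup|u|$, $\sup|f|$ on $\partial B_0(\rho)$ and on $B_0(\rho)$ respectively, up to a universal constant), yields the claimed pointwise bound for all $r\le \rho/2^\beta$.

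The main obstacle I anticipate is getting the $k$-sums to close \emph{uniformly in $\beta$}: naively each mode is fine, but one must check that the implied constants in the Green's-kernel estimate do not blow up as $|k|/\beta\to\infty$ and that the Fourier reconstruction does not cost a $\beta$-dependent factor. The saving grace is precisely the exponent $|k|/\beta\ge 1/\beta$: the growing solution is suppressed by $(r/\rho)^{|k|/\beta}$, the Green's kernel $G_k(r,s)$ decays like $(\min(r,s)/\max(r,s))^{|k|/\beta}$ off the diagonal, and $\int_0^\rho |G_k(r,s)|s\,ds$ is bounded by $C r^2/(|k|/\beta)$ (so actually \emph{improves} with $|k|$ and with small $\beta$), which is more than enough to sum. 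A secondary technical point is justifying termwise differentiation and the representation formula for the merely-bounded (a priori only distributional) solution $u$ — this is handled by an interior-regularity/approximation argument away from $0$, combined with the boundedness hypothesis to rule out the singular mode, exactly as in the scalar case. With these in hand the estimate follows; it will feed into the proof of Theorem~\ref{Schauder conique} via a scaling/covering argument in the next subsection.
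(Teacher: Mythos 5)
The idea is close in spirit to the paper's: the paper performs the single holomorphic substitution $v(z)=u(\rho^{1/\beta}z)$, which turns the cone problem into $\Delta_{\rm eucl}v(z)=\rho^2\beta^2|z|^{2(\beta-1)}f(\rho^{1/\beta}z)$ on the unit disc, then splits $v=h+I$ into the Poisson part and the Green's integral $I(z)=\rho^2\int_{|w|<1}\beta^2|w|^{2(\beta-1)}f(\rho^{1/\beta}w)\log\big|\tfrac{z-w}{1-\bar wz}\big|\,|dw|^2$ and estimates each directly; your Fourier-mode decomposition is an alternative route to the same elementary estimates, and your treatment of the harmonic part (with $\sum_{k}|k|(r/\rho)^{|k|/\beta}\lesssim(r/\rho)^{1/\beta}$) correctly produces the $\frac1{\beta r}(r/\rho)^{1/\beta}\sup|u|$ term.

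However, there is a genuine gap in the $k$-summation for the gradient of the Green's-kernel part. With $m:=|k|/\beta$, the Green's kernel for the $k$-th mode on $(0,\rho)$ is $G_k(r,s)=-\tfrac1{2m}\phi_1(\min)\phi_2(\max)$ with $\phi_1=r^m$, $\phi_2(r)=r^{-m}(1-(r/\rho)^{2m})$, and a direct computation gives
\[
\int_0^\rho |\partial_r G_k(r,s)|\,s\,ds \;\lesssim\; \frac{r}{m}\;=\;\frac{r\beta}{|k|}.
\]
(Your bound $\int_0^\rho|G_k|s\,ds\lesssim r^2/m$ is in fact $r^2/m^2$, but for $\partial_r u_k$ only the factor $r/m$ survives.) Bounding $\sup_s|f_k(s)|\le\sup|f|$ and summing over $k\neq0$ then gives $r\beta\sup|f|\sum_{k\neq0}|k|^{-1}$, which diverges. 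So the assertion that the bound ``is more than enough to sum'' is not correct: the mode-by-mode bookkeeping for $|\nabla u^{\rm Green}|$ does not converge absolutely, and Parseval/Cauchy--Schwarz in $k$ does not rescue it either (one still ends up with $\sum 1/m$). The underlying reason is that the two-dimensional fact used in the paper, $\int_{|w|<1}\tfrac{|dw|^2}{|w|^{2(1-\beta)}|z-w|}<\infty$ with the explicit bound $\lesssim|z|^{2\beta-1}/\beta$, genuinely relies on cancellation across Fourier modes and is destroyed once one takes absolute values mode by mode. To repair the argument you would need to reassemble the modes into the full two-dimensional Green's function before estimating (at which point you have essentially reproduced the paper's proof), or find another mechanism of cancellation in the $k$-sum.
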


\begin{rem}
A trivial but crucial observation is that when $r\le \rho/2$, we have $ \frac{1}{\beta}(\frac r{\rho})^{\frac1\beta}\to 0$ when $\beta\to 0$. In particular, and this is all we will use in the following, the latter quantity is bounded when $\beta$ approaches zero. 
\end{rem}

\begin{proof}
The function $v(z):=u(\rho^{1/\beta}z)$ is defined for $|z|\le 1$ and satisfies $\Delta_{\rm eucl}v(z)=\rho^2\frac{\beta^2}{|z|^{2(1-\beta)}}f(\rho^{1/\beta}z)$. Classically, we have
\begin{equation}
\label{green}
v(z)=h(z)+\underbrace{\rho^2\int_{|w|<1}\frac{\beta^2}{|w|^{2(1-\beta)}} f(\rho^{1/\beta} w)\log\left|\frac{z-w}{1-\bar w z}\right| |dw|^2}_{=:I(z)},
\end{equation}
where $h$ is the harmonic function on the unit disk $\mathbb D \subset \mathbb C$ whose boundary values are $v|_{\d \mathbb D}$. In particular, there exists a universal constant $C_1$ such that 
\begin{equation*}
\sup_{|z|\le \frac 1 2}|\nabla^{\rm eucl} h(z)|\le C_1\sup_{|z|=1} |v(z)|= C_1 \sup_{B_0(\rho)} |u|.  
\end{equation*}
In particular, we get for $|\rho^{-1/\beta}z|\le 1/2$ (or, equivalently, $r\le 2^{-\beta}\rho$): 
\begin{equation}
\label{harmonic gradient}
|\nabla^{\wh g_\beta}h(\rho^{-1/\beta}z)|\le \frac{C_1}{\beta}\left(\frac r{\rho}\right)^{\frac1\beta}\cdot \frac 1r  \sup_{B_0(\rho)} |u|,
\end{equation}
which takes care of the first part in the RHS of \eqref{green}. To take care of the integral summand $I(z)$, we assume that $|z|\le 1/2$ so that $\nabla^{\rm eucl}I(z)$ is controlled by $\rho^2\beta^2\sup |f| \cdot \int_{|w|<1}\frac{ |dw|^2}{|w|^{2(1-\beta)}\cdot |z-w|} $. Performing the change of variable $x:=w/z$ in the integral, we get $|z|^{2\beta-1}\int_{|x|<1/|z|}\frac{ |dx|^2}{|x|^{2(1-\beta)}\cdot |x-1|}$. There are three zones: around $x=0$ the integral is equivalent to $1/\beta$, around $x=1$ we have uniform integrability while around $|x|=1/|z|$ it is equivalent to $|z|^{1-2\beta}$ hence it is uniformly integrable too. All in all, we find that $|\nabla^{\rm eucl}I(z)|\lesssim \beta\rho^2|z|^{2\beta-1}\cdot \sup |f|$. In terms of $\gk$-gradient, this means that 
\[|\nabla^{\wh g_\beta}I(\rho^{-1/\beta}z)|\lesssim \frac 1\beta r^{\frac 1\beta-1}\rho^{-1/\beta}\cdot \beta\rho^2|\rho^{-1/\beta}z|^{2\beta-1} \cdot \sup |f|= r\cdot \sup |f|.\]
Combined with \eqref{harmonic gradient}, this yields the desired gradient estimate for $u$. 
\end{proof}

We will also need the following gradient estimate in any dimension for harmonic functions: 

\begin{lem}
\label{gradient 2}
Let $u\in L^{\infty}(B_p(\rho))$ be a harmonic function, i.e. $\Delta_{\gk}u=0$. Assume that either $p=0$ or $\rho<r(p)$. Then there exists a universal constant $C=C(n)>0$ such that 
\[\sup_{B_p(\rho/2)}|\nabla^{\gk}u| \le  \frac C \rho  \sup_{B_p(\rho)} |u|.\]
In particular, for any integer $\ell>0$ that 
\[\sup_{B_p(\rho/2)}|(D'')^\ell u| \le  \frac {C(n,\ell)} {\rho^{\ell}}  \sup_{B_p(\rho)} |u|; \quad \sup_{B_p(\rho/2)}|(D'')^\ell D' u| \le  \frac {C(n,\ell)} {\rho^{\ell+1}}  \sup_{B_0(\rho)} |u|\]
as well as, if $p$ is not on the divisor
\begin{equation}
\label{gradient 5}
 \left|\nabla^{\gk}\big[\d_r(D'')^\ell u\big](z)\right|+ \left|\nabla^{\gk}\big[\frac{1}{\beta r}\d_\theta(D'')^\ell u\big](z)\right|  \le C (1+\frac{r(p)}r) \cdot \frac {\|u\|_{L^{\infty}(B_p(\rho))}} {\rho^{\ell+2}} .
 \end{equation}
\end{lem}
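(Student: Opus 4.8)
\emph{Overall strategy.} The plan is to prove first the basic gradient estimate $\sup_{B_p(\rho/2)}|\nabla^{\gk}u|\le \tfrac C\rho\sup_{B_p(\rho)}|u|$ with $C=C(n)$ \emph{independent of $\beta$}, and then to deduce everything else from it by formal iteration. Since $D''=(\d_{x_j},\d_{y_j})$ only involves the $z'$--coordinates, it commutes with $\Delta_{\gk}=\Delta_{\wh g_\beta}+\Delta_{\C^{n-1}}$, so each $(D'')^\ell u$ is again $\gk$--harmonic on $B_p(\rho)$; running the basic estimate $\ell$ times along a nested family $B_p(\rho)\supset\cdots\supset B_p(\rho/2)$ and using $|D''v|\le|\nabla^{\gk}v|$ gives $\sup_{B_p(\rho/2)}|(D'')^\ell u|\le \tfrac{C(n,\ell)}{\rho^\ell}\sup_{B_p(\rho)}|u|$, and one more application to the harmonic function $(D'')^\ell u$ (using $D'((D'')^\ell u)=(D'')^\ell D'u$) gives the second displayed inequality. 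For \eqref{gradient 5} I would differentiate once more: writing, for $w:=(D'')^\ell u$, the scalars $\d_r w$ and $\tfrac1{\beta r}\d_\theta w$ as contractions of $\nabla w$ with the $\wh g_\beta$--unit fields $\d_r,\tfrac1{\beta r}\d_\theta$, one has $\nabla^{\gk}(\d_r w)=(\nabla^2w)(\d_r,\cdot)+(\nabla w)(\nabla\d_r)$ and similarly for the angular field; the Christoffel symbols of $\wh g_\beta$ contribute a factor $O(1/r)$ (the source of the $1+\tfrac{r(p)}r$), and one then inserts the bounds already obtained on $\nabla^{\gk}w$ and $\nabla^{\gk}\nabla^{\gk}w$ — the second–order bound coming from iterating in the $D''$ directions and from using $\Delta_{\wh g_\beta}w=-\Delta_{z'}w$ to trade $\d_r^2w$ for lower order terms.

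\emph{The basic estimate when $p=0$.} First I would rescale: the substitution $(z_1,z')\mapsto(\rho^{1/\beta}z_1,\rho z')$ multiplies $\gk$ by $\rho^2$, preserves harmonicity and $\|u\|_{L^\infty}$, and divides $|\nabla^{\gk}u|$ by $\rho$, so one may take $\rho=1$; set $M:=\sup_{B_0(1)}|u|$. Since $B_0(1)=\{|z_1|^{2\beta}+\|z'\|^2<1\}$ is invariant under rotations in $z_1$, I would Fourier–expand $u=\sum_{k\in\Z}u_k(r,z')e^{ik\theta}$, each $w_k:=u_ke^{ik\theta}$ being $\gk$--harmonic. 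For the zero mode, $U_0(\zeta,z'):=u_0(|\zeta|,z')$ is a \emph{genuine Euclidean} harmonic function on $\{|\zeta|^2+\|z'\|^2<1\}\subset\R^{2n}$ — because $\d_r^2+\tfrac1r\d_r+\Delta_{z'}$ is exactly the Euclidean Laplacian on functions of $(|\zeta|,z')$ — with $|U_0|\le M$, so the classical interior gradient estimate gives $|\nabla^{\gk}w_0|\le|\nabla U_0|\le C(n)M$ on $B_0(1/2)$. For $k\neq0$, $w_k$ vanishes along $D$ (as $u_k(0,z')=\tfrac1{2\pi}\int_0^{2\pi}u(0,z')e^{-ik\theta}\,d\theta=0$), and the radial equation for $u_k$ carries the large negative potential $-\tfrac{k^2}{\beta^2r^2}u_k$, which near $D$ forces the vanishing rate $r^{|k|/\beta}$: comparing $u_k(\cdot,z')$ with the explicit radial solutions $r^{\pm|k|/\beta}$ (maximum principle / barriers in $r$, with $\Delta_{z'}u_k$ treated as a lower–order term controlled by interior estimates in the $z'$ variables) should give $|\nabla^{\gk}w_k|\le C(n)(k/\beta)^{C(n)}M\,r^{|k|/\beta-1}$ on $B_0(1/2)$. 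Since $r^{1/\beta}\le 2^{-1/\beta}$ on $B_0(1/2)$, the series $\sum_{k\ge1}(k/\beta)^{C}(r^{1/\beta})^{k}$ is bounded — indeed tends to $0$ — uniformly for $\beta\in(0,\tfrac12]$; adding the contributions of all modes yields the basic estimate.

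\emph{The basic estimate when $p\neq0$, $\rho<r(p)$.} Then $B_p(\rho)$ is disjoint from $D$ and $\gk$ restricts to a smooth flat metric there; the idea is to ``unroll the cone''. I would pass to the universal cover $\pi\colon\widetilde{B_p(\rho)}\to B_p(\rho)$, a local isometry; being flat and simply connected, it admits a developing map into $\R^{2n}$ which is again a local isometry. Fixing a lift $\tilde p$ of $p$, any geodesic from $\tilde p$ of length $<\rho$ projects to a path of length $<\rho$ from $p$, hence stays in $B_p(\rho)$ and at distance $\ge r(p)-\rho>0$ from the cone vertex, so it cannot wind around the vertex; a short Gauss--Bonnet (or direct) argument then identifies $\exp_{\tilde p}$ on the metric $\rho$--ball of $\widetilde{B_p(\rho)}$ with an honest Euclidean ball $B^{2n}_\rho$. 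There the lift $\tilde u$ is Euclidean harmonic with $\sup|\tilde u|\le\sup_{B_p(\rho)}|u|$, so $\sup|\nabla\tilde u|\le\tfrac{C(n)}\rho\sup|\tilde u|$ on $B^{2n}_{\rho/2}$; since $\pi$ and the developing map preserve both distances and $|\nabla u|$, this descends to the required bound on $B_p(\rho/2)$.

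\emph{Main obstacle.} The only genuinely non–routine point is the $\beta$--uniformity of the basic estimate near $D$, i.e.\ the treatment of the non–radial Fourier modes in the case $p=0$: although each mode's gradient carries the large factor $k/\beta$, one must show their collective contribution to $\nabla^{\gk}u$ on $B_0(\tfrac12)$ stays bounded as $\beta\to0$. This rests on the exact decay rate $r^{|k|/\beta}$ (forced by the potential $k^2\beta^{-2}r^{-2}$) together with the summability of $\sum_{k\ge1}(k/\beta)(r^{1/\beta})^{k}$ for $r\le\tfrac12$ — precisely why the estimate is on $B_p(\rho/2)$ rather than on all of $B_p(\rho)$ — and, for $n>1$, on carrying the $z'$--dependence through the radial comparison, which is the main technical overhead.
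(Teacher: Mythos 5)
Your route is genuinely different from the paper's, and it has gaps that matter.

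\textbf{Different approach.} The paper does not prove the basic gradient estimate from scratch: following Guo--Song, it approximates $\gk$ by smooth metrics with non-negative Ricci curvature (possible because the cone angle $2\pi\beta<2\pi$ gives non-negative distributional curvature) and invokes the Cheng--Yau gradient estimate, which is automatically uniform in $\beta$. You instead propose a from-scratch Fourier-mode analysis for $p=0$ and a cone-unrolling argument for $p\ne 0$. Your zero-mode reduction to a genuine Euclidean harmonic function is clean and correct. For the non-zero modes, the indicial decay rate $r^{|k|/\beta}$ and the summability of $\sum_k (k/\beta)^C(r/\rho)^{k/\beta}$ on $B_0(\rho/2)$ are the right mechanism, but when $n>1$ the mode $u_k(r,z')$ satisfies a genuinely coupled PDE, not an ODE with a harmless forcing term: one cannot simply ``treat $\Delta_{z'}u_k$ as lower order controlled by interior estimates'' without first having an estimate, which is what you are trying to prove. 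You flag this yourself, but it is a real hole, not just overhead. In the $p\ne 0$ case, the step ``any geodesic of length $<\rho$ from $\tilde p$ stays at distance $\ge r(p)-\rho$ from the vertex, so it cannot wind around'' is false for small $\beta$: when $\sin(\pi\beta)\,r(p)<\rho<r(p)$ such geodesics \emph{do} wrap around and $B_p(\rho)$ is an annulus times a ball, as the paper explicitly notes. The conclusion you want (the $\rho$-ball in the universal cover of the punctured cone is isometric to a Euclidean ball) is nevertheless true, but the correct reason is the absence of conjugate points (flatness) together with the absence of geodesic bigons (Gauss--Bonnet), not the absence of winding.

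\textbf{The real gap: estimate \eqref{gradient 5}.} Your plan is to write $\nabla^{\gk}(\d_r w)$ and $\nabla^{\gk}\bigl(\tfrac1{\beta r}\d_\theta w\bigr)$ as Hessian terms plus Christoffel corrections, trading $\d_r^2 w$ for lower order via $\Delta_{\wh g_\beta}w=-\Delta_{z'}w$. But the component $\tfrac1{\beta^2 r^2}\d_\theta^2 w$ of $\nabla^{\gk}\bigl(\tfrac1{\beta r}\d_\theta w\bigr)$ is not reached by the Laplacian identity (the identity only expresses $\d_r^2w$ \emph{in terms of} $\tfrac1{\beta^2r^2}\d_\theta^2w$ plus other terms, it does not bound it), nor is it reached by iterating $D''$, which only involves the $z'$ directions. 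So your second-order input $\|\nabla^2 w\|$ is simply not available by the route you describe; in fact producing uniform-in-$\beta$ control of the full Hessian of a harmonic function on the cone is precisely the delicate point of this whole section. The paper circumvents this with a Killing-field trick you do not use: because $\d_\theta$ commutes with $\Delta_{\gk}$, the function $\tfrac1\beta\d_\theta v$ is \emph{itself} harmonic, so the basic gradient estimate (already proved) applies to it twice, yielding a bound on $\bigl|\nabla^{\gk}\tfrac1\beta\d_\theta v\bigr|$ which in particular controls $\tfrac1{\beta^2 r}\d_\theta^2 v$; the factor $r(p)/r$ then arises from the elementary pointwise inequality $\bigl|\nabla^{\gk}\tfrac1{\beta r}\d_\theta v\bigr|\le \tfrac1{r^2}\bigl|\tfrac1\beta\d_\theta v\bigr|+\tfrac2r\bigl|\nabla^{\gk}\tfrac1\beta\d_\theta v\bigr|$. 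You should replace your Christoffel-symbol computation with this Killing-field iteration; without it, \eqref{gradient 5} is not established.
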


\begin{proof}
Following \cite[Lemma~2.4 \& Proposition~2.2]{GS}, one can approximate $\gk$ by smooth metrics $\bar g_{\beta,\ep}$ with non-negative Ricci curvature and use Cheng-Yau's gradient estimate \cite{CY3} to get the first two sets of inequalities. For the last estimate, set $v:=(D'')^{\ell}u$ and  observe that $\frac 1\beta\d_{\theta} v$ is harmonic on $B_p(\rho)$ and therefore
\[\left|\frac 1\beta\d_{\theta} v\right| \le C \frac {r\|u\|_{L^{\infty}(B_p(\rho))}} {\rho^{\ell+1}} \quad \text{on } B_p(\rho/2)  \] thanks to the previous gradient estimate. Here, $C=C(n,\ell)$ and may change from line to line. Iterating that argument, we get 
\begin{equation}
\label{nabla 2}
\left|\nabla^{\gk}\frac 1\beta\d_{\theta} v\right| \le    C \frac {r(p) \|u\|_{L^{\infty}(B_p(\rho))}} {\rho^{\ell+2}} \quad \text{on } B_p(\rho/2).
 \end{equation}
Since $|\nabla^{\gk} \frac1{\beta r} \d_\theta v|\le  \frac{1}{r^2}\left|\frac 1\beta\d_{\theta} v\right|+\frac2r \left|\nabla^{\gk} \frac1{\beta} \d_\theta v\right|$, we get
\[|\nabla^{\gk} \frac1{\beta r} \d_\theta v| \le C (\frac \rho r+\frac{r(p)}r) \cdot \frac {\|u\|_{L^{\infty}(B_p(\rho))}} {\rho^{\ell+2}}\]
which provides half of the desired inequality. For the second half, observe that $\nabla^{\gk}\d_rv$ involves the following terms: $\d^2_r v$,$\frac 1r \d_r(\frac 1\beta\d_\theta v)$ and $\nabla''\d_rv$. The last term is controlled by the gradient estimate already established for the harmonic function $D''v$ and the second one is controlled by \eqref{nabla 2}. Finally, the first one can be written $\d_r^2v=\underbrace{\Delta_{\gk}v}_{=0}-\frac 1r\d_rv-\frac{1}{\beta^2r^2}\d_\theta^2v-\Delta_{\mathbb C^{n-1}}v$ and the estimate follows from the previous ones.
\end{proof}


In Lemma~\ref{gradient 2} above, one can adapt the proof of the estimate \eqref{gradient 5} in the case where $p=0$ is centered on the divisor, and the RHS becomes $C  \frac {\|u\|_{L^{\infty}(B_p(\rho))}} {r\rho^{\ell+1}}$, which turns out to be too coarse for our later purposes. Instead, we will use the input of Lemma~\ref{gradient} to obtain the following estimate, valid for balls centered on the divisor. 

\begin{lem}
\label{gradient 3}
Let $u\in L^{\infty}(B_0(\rho))$ be a harmonic function, i.e. $\Delta_{\gk}u=0$. There exists $C=C(n,\ell)$ such that for all $z=(r,\theta, z')$ in $B_0(\rho/4)$, one has 
\[ \left|\nabla^{\wh g_\beta}\big[(D'')^\ell u\big](z)\right|+\left|\nabla^{\wh g_\beta}\Big[\frac 1\beta \d_\theta (D'')^\ell u\Big](z)\right| \le C \frac {\|u\|_{L^{\infty}(B_0(\rho))}} {\rho^{\ell+1}} \cdot \left(\frac{1}{\beta}\left(\frac r{\rho}\right)^{\frac1\beta-1}+\frac r\rho\right) \]
as  tas 
\begin{equation}
\label{gradient 6}
 \left|\nabla^{\wh g_\beta}\big[\d_r(D'')^\ell u\big](z)\right|  \le C \frac {\|u\|_{L^{\infty}(B_0(\rho))}} {\rho^{\ell+2}} \cdot \left(\frac{1}{\beta}\left(\frac r{\rho}\right)^{\frac1\beta-2}+1\right).
 \end{equation}
 \end{lem}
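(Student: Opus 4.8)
The plan is to slice in the $\mathbb C^{n-1}$ factor, reduce to the one–dimensional estimate of Lemma~\ref{gradient}, and then bootstrap; the decisive factor $\tfrac1\beta(\tfrac r\rho)^{1/\beta}$ (and its boundedness for $\beta<\tfrac12$, cf. the Remark after Lemma~\ref{gradient}) is exactly what that lemma provides. First I would record two structural facts. Since $\gk=\wh g_\beta\oplus g_{\mathbb C^{n-1}}$ and its coefficients depend on neither $\theta$ nor $z'$, both the operators $D_k$ ($k\ge 3$) and $\tfrac1\beta\partial_\theta$ commute with $\Delta_{\gk}$; hence $(D'')^ku$ and $\tfrac1\beta\partial_\theta(D'')^ku$ are again $\gk$--harmonic for every $k\ge0$, and Lemma~\ref{gradient 2} gives $\sup_{B_0(\rho/2)}|(D'')^ku|\le C(n,k)\rho^{-k}\|u\|_{L^\infty(B_0(\rho))}$. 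Moreover, for any $\gk$--harmonic $v$, the restriction of $v$ to a slice $\{z'=\text{const}\}$ solves on the one–dimensional cone $\Delta_{\wh g_\beta}\big(v|_{\text{slice}}\big)=-(\Delta_{\mathbb C^{n-1}}v)|_{\text{slice}}$, and the right-hand side is a combination of higher $D''$--derivatives of $u$, hence controlled by Lemma~\ref{gradient 2}.

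Combining these, for any $\gk$--harmonic $v$ on $B_0(\rho)$ and any $z=(r,\theta,z')\in B_0(\rho/4)$ I apply Lemma~\ref{gradient} to $v|_{\text{slice through }z'}$ on a slice disk of radius comparable to $\rho$ (it sits inside $B_0(\rho/2)$ and, since $\beta<\tfrac12$, $r$ lies in the admissible range $r\le(\text{radius})/2^{\beta}$), getting
\[
|\nabla^{\wh g_\beta}v|(z)\le C\Big[\tfrac1\beta\big(\tfrac r\rho\big)^{1/\beta}\tfrac1r\sup_{B_0(\rho/2)}|v|+r\sup_{B_0(\rho/2)}|\Delta_{\mathbb C^{n-1}}v|\Big].
\]
Taking $v=(D'')^\ell u$ and inserting the interior bounds yields the estimate for $|\nabla^{\wh g_\beta}[(D'')^\ell u]|$. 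Applying the same inequality to the $\gk$--harmonic function $v=\tfrac1\beta\partial_\theta(D'')^\ell u$ gives the second half of the first displayed bound, once one observes that $\sup_{B_0(\rho/2)}|\tfrac1\beta\partial_\theta(D'')^k u|\le C\rho^{-k}\|u\|$ and $\sup_{B_0(\rho/2)}|\Delta_{\mathbb C^{n-1}}\tfrac1\beta\partial_\theta(D'')^k u|\le C\rho^{-k-2}\|u\|$: indeed $|\tfrac1\beta\partial_\theta w|=r\,|\tfrac1{\beta r}\partial_\theta w|\le r\,|\nabla^{\wh g_\beta}w|$, so these follow from the previous step together with the boundedness of $\tfrac1\beta(\tfrac r\rho)^{1/\beta}$ for $\beta<\tfrac12$.

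Finally, for \eqref{gradient 6} set $v=(D'')^\ell u$ and $w=\tfrac1\beta\partial_\theta v$; the two components of $\nabla^{\wh g_\beta}(\partial_r v)$ are $\partial_r^2v$ and $\tfrac1{\beta r}\partial_\theta\partial_r v$. Since mixed partials commute away from $D$, $\tfrac1{\beta r}\partial_\theta\partial_r v=\tfrac1r\partial_r w\le\tfrac1r|\nabla^{\wh g_\beta}w|$, which by the bound already obtained for $w$ is $\le C\rho^{-\ell-2}\|u\|\big(\tfrac1\beta(\tfrac r\rho)^{1/\beta-2}+1\big)$. For $\partial_r^2v$ I use the slice equation $\partial_r^2v=-(\Delta_{\mathbb C^{n-1}}v)|_{\text{slice}}-\tfrac1r\partial_r v-\tfrac1r\cdot\tfrac1{\beta r}\partial_\theta w$ and bound the three terms by Lemma~\ref{gradient 2}, by $\tfrac1r|\nabla^{\wh g_\beta}v|$, and by $\tfrac1r|\nabla^{\wh g_\beta}w|$ respectively; in each case the factor $\tfrac1r$ turns $(\tfrac r\rho)^{1/\beta-1}$ into $(\tfrac r\rho)^{1/\beta-2}$, giving \eqref{gradient 6}. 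I expect the main obstacle to be not any single estimate but the $\beta$--uniform bookkeeping of scales: one must keep the slice radii comparable to $\rho$ and check that the normalization of the factor $\tfrac1\beta(\tfrac r\rho)^{1/\beta}$ is preserved under the finitely many passages to smaller balls (possibly after harmlessly replacing $\rho$ by a fixed multiple of it), so that the constant in the conclusion depends only on $n$ and $\ell$.
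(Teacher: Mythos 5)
Your proof follows essentially the same route as the paper's: commute $D''$ and $\tfrac1\beta\partial_\theta$ through $\Delta_{\gk}$, pass to a slice where the restricted function solves $\Delta_{\wh g_\beta}(\cdot)=-\Delta_{\mathbb C^{n-1}}(\cdot)$, feed the sup bounds from Lemma~\ref{gradient 2} into the one-dimensional Lemma~\ref{gradient}, and then handle $\partial_r^2 v$ via the slice equation together with the already-obtained bounds on $\partial_r v$ and $\tfrac1{\beta r}\partial_\theta^2 v$. The bookkeeping subtlety you flag at the end — that the slice disk has radius a fixed fraction $R=c\rho$ of $\rho$, so the one-dimensional lemma produces $(r/R)^{1/\beta}$ rather than $(r/\rho)^{1/\beta}$ — is real, and worth making explicit: one checks that for $r\le \rho/4$ and $R\ge c\rho$ with $c>1/4$, the ratio $\tfrac{(\rho/R)^{1/\beta}}{\beta}(r/\rho)^{1/\beta-2}$ is uniformly bounded in $\beta\in(0,\tfrac12]$, so the extra factor $(\rho/R)^{1/\beta}$ can be absorbed into the constant together with the term $r/\rho$ (resp.\ $1$) on the right-hand side; the paper leaves this to the reader as well. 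Apart from that, your argument is correct and matches the paper's.
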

 
 \begin{proof}
 The two important points are that $\Delta_{\gk}$ commutes with both $D''$ and $\d_\theta$ and that for any $\gk$-harmonic function $w$, one has  $\Delta_{\wh g_\beta} w=-\Delta_{\mathbb C^{n-1}}w$. Set $v:=(D'')^\ell u$. By Lemma~\ref{gradient 2}, \[\sup_{B_0(\rho/2)} \big[|v|+|\frac 1\beta \d_\theta v|\big] \le C \frac {\|u\|_{L^{\infty}(B_0(\rho))}} {\rho^{\ell+1}}\] 
so that the first inequality now easily follows from Lemma~\ref{gradient}.

The second inequality requires a bit more work.  We start by decomposing
\[\nabla^{ \gk}\d_rv = (\d_r^2v,\frac{1}{\beta r}\d^2_{r\theta}v, D''\d_rv) \]
and observing that the last two components are controlled on $B_0(\rho/4)$ by 
\[\sup_{B_0(\rho/2)}\left[ |\nabla^{\wh g_\beta}D'' v|+\frac 1r |\nabla^{\wh g_\beta}\frac 1\beta \d_\theta v|\Big]\right],\]
which in turn is controlled by $ M:=  \frac {\|u\|_{L^{\infty}(B_0(\rho))}} {\rho^{\ell+2}} \cdot (\frac{1}{\beta}\left(\frac r{\rho}\right)^{\frac1\beta-2}+1)$ thanks to the first inequality. We are left to estimating $ \d_r^2v$. We write
\begin{align*}
 \d_r^2v&=\Delta_{\wh g_\beta}(v)-\frac 1r \d_r v-\frac 1{\beta^2r^2}\d^2_{\theta}v\\
 &=-\Delta_{\mathbb C^{n-1}}(v)-\frac 1r \d_r v-\frac 1{\beta^2r^2}\d^2_{\theta}v
  \end{align*} 
  and observe that each summand is controlled by \[\sup_{B_0(\rho/2)}\left[ |(D'')^{2}v|+\frac 1r\Big[|\nabla^{\wh g_\beta}v|+|\nabla^{\wh g_\beta}\frac 1\beta \d_\theta v|\Big]\right] \le C M\]
 thanks to the first inequality again. The lemma is proved.
 \end{proof}

\subsection{Strategy of the proof of Theorem~\ref{Schauder conique}}
The main idea is to consider, for a given point $p\in B_\beta$ a sequence of functions $(u_\k)$ defined on neighborhoods $U_\k$ of $p$ getting smaller and smaller when $\k$ increases and such that 
\[\begin{cases} 
\Delta_{\gk}u_\k=f(p) &\text{on } U_\k\\
u_\k|_{\partial U_\k}= u|_{\partial U_\k}
\end{cases} 
\]
More precisely, let us set $\lambda:=\frac 12$, $r(p):=d(p,D)$ and choose $U_\k=B_p(\lambda^\k)$ if $r(p)> \lambda^{\k}$ and $U_\k:=B_{\tilde p}(2\lambda^\k)$ otherwise, where $\tilde p$ is the projection of $p$ onto $D$ under the natural map $\mathbb C^n\to \mathbb C^{n-1}, (z_1, \ldots, z_n)\to (z_2, \ldots, z_n)$. Note that if $\sin(\beta\pi) r(p)<\rho<r(p)$, then the geodesic ball $B_p(\rho)$ is essentially an annulus (times an euclidean ball).   

Since $u_\k-\frac1{n-1}\|z'\|^2$ is harmonic, $u_\k$ enjoys all the regularity properties shared by harmonic functions.\\

The strategy is, given two indices $i,j$ and two points $p,q\in B_\beta$, to estimate $D_iD_ju(p)-D_iD_ju(q)$ by the analogous quantity for $u_\k$, for some $\k=\k(p,q)$ chosen carefully. More precisely, the choice of $\k$ with be such that $\lambda^{\k}\simeq 8d$ where $d=d(p,q)$. In particular, $U_\k$ will contain $B_p(2d)$ and thus the geodesic joining $p$ to $q$.

\subsection{$\mathcal C^2$ estimates}
It is convenient to write $\omega(r)$ for the modulus of continuity of $f$. By assumption, one has $\omega(r)=O(r^{\alpha})$. By considering $u-u_\k\pm \omega(\lambda^\k)\cdot \big[(r-r(p))^2+\|z'-z'(p)\|^2\big]$, one easily deduces from the maximum principle that 
\begin{equation}
\label{u and u_k}
\|u-u_\k\|_{L^{\infty}(U_\k)} \le C(n) \lambda^{2\k} \omega(\lambda^\k). 
\end{equation}
By the triangle inequality, these inequality extend to quantify the {\it harmonic} functions 
\[h_\k:=u_{\k+1}-u_\k\] 
on $U_{\k+1}$ along with their derivatives thanks to Lemma~\ref{gradient 2}
\begin{equation}
\label{u and u_k derivatives}
\|D''h_\k\|_{L^{\infty}(U_{\k+2})} \le C(n) \lambda^{\k} \omega(\lambda^\k); \quad \|(D'')^2h_\k\|_{L^{\infty}(U_{\k+2})} \le C(n) \omega(\lambda^\k).
\end{equation}
For $\k\gg1$, one can define on $U_\k$ a single-valued branch $w=z_1^\beta$ realizing an isomorphic biholomorphism between $(U_\k,\gk)$ and a euclidean ball $(B_{\rm eucl}(\lambda^\k), g_{\rm eucl})$.  Using this, one gets that whenever $p\notin D$,  
\begin{equation}
\label{u = lim u_k}
\lim_{\k \to +\infty} D'' u_\k(p)=D''u(p); \quad \lim_{\k \to +\infty} (D'')^2 u_\k(p)=(D'')^2u(p), 
\end{equation}
cf \cite[Lemma~2.8]{GS}.
Write $(D'')^2u_\k=(D'')^2u_1+\sum_{j=1}^{\k-1}(D'')^2(u_{j+1}-u_j)$ on $U_\k$ and  then combine \eqref{u = lim u_k}, \eqref{u and u_k derivatives} and Lemma~\ref{gradient 2} to obtain
\begin{equation}
\label{control 2 derivatives}
\|(D'')^2u\|_{L^{\infty}(B'_\beta)} \le C(n,\alpha)\big[ \|u\|_{L^{\infty}(B_\beta)}+\|f\|_{\mathcal C^{\alpha}(B_\beta)} \big].
\end{equation}
Since $\Delta_{\wh g_\beta}u=f-\sum_{j= 3}^{2n} D_j^2u$, \eqref{control 2 derivatives} provides a bound for $\|\Delta_{\wh g_\beta}u\|_{L^{\infty}(B_\beta')}$ in terms of $ \|u\|_{L^{\infty}(B_\beta)}$ and $ \|f\|_{L^{\infty}(B_\beta)}$.

\subsection{$\mathcal C^\alpha$ estimates for the tangential derivatives}
\label{subsec tangential}
Let $p,q\in {B'_\beta}^*$ and let $d=d_{\gk}(p,q)$. By \cite[Proposition~2.3]{GS}, we have
\begin{equation}
\label{tangential}
\left|(D'')^2u(p)-(D'')^2u(q)\right| \le C(n,\alpha) \Big[ d\|u\|_{L^{\infty}(B_\beta)}+d^\alpha\|f\|_{\mathcal C^{\alpha}(B_\beta)}\Big].
\end{equation}
For the reader's convenience, we recall the main steps. We introduce the functions $v_\k$ playing the role of $u_\k$ but for the point $q$ instead of $p$. Choose $\k$ such that $d\simeq \lambda^{\k+3}$ and assume $r(p)=\min(r(p),r(q))\le 2d$ for simplicity. We have essentially three terms to treat
\[\underbrace{(D'')^2u(p)-(D'')^2u_\k(p)}_{=:\text{(I)}};  \underbrace{(D'')^2u_\k(p)-(D'')^2u_\k(q)}_{=:\text{(II)}}; \underbrace{(D'')^2u_\k(q)-(D'')^2v_\k(q)}_{=:\text{(III)}}.\]
The first term is easy to handle: 
\begin{equation*}
\left|\text{(I)}\right|=\lim_{N\to +\infty}\left| \sum_{j=\k}^N (D'')^2h_j(p)\right|
\le C(n) \sum_{j=\k}^{+\infty} \om(\lambda^j) 
 \le C(n,\alpha) d^\alpha\|f\|_{\mathcal C^{\alpha}(B_\beta)}.
\end{equation*}
For the second term, we use the gradient estimate \eqref{gradient 2} for the harmonic function $(D'')^2h_j$ ($2\le j\le \k-1$): 
\[\sup_{B_{\tilde p}(\lambda^{j})}|\nabla^{\gk}(D'')^2h_j| \le   C(n) \lambda^{-j} \omega(\lambda^j)\]
and after integration along the geodesic joining $p$ anq $q$ (which lies in $B_{\tilde p}(\lambda^\k)^*$)
\[\left|(D'')^2u_{j+1}(p)-(D'')^2u_{j+1}(q)\right|\le \left|(D'')^2u_{j}(p)-(D'')^2u_{j}(q)\right|+C(n) d \lambda^{-j} \omega(\lambda^j)\]
and by iterating
\[\left|\text{(II)}\right|\le \left|(D'')^2u_{2}(p)-(D'')^2u_{2}(q)\right|+C(n) d^\alpha\|f\|_{\mathcal C^{\alpha}(B_\beta)}.\]
The first term in the RHS is almost harmonic on a ball of definite size, so by using the gradient estimate \eqref{gradient 2}, on can dominate it by $C(n)d \|u\|_{L^{\infty}(B_\beta)}$.
As for the third term, the function $u_\k-v_\k$ is well-defined on $B_{\tilde q}(\lambda^\kappa)$, it is almost harmonic and its sup-norm on that ball is of order $\lambda^{2\k}\omega(\lambda^\k)$ by \eqref{u and u_k}. The gradient estimate for harmonic functions (Lemma~\ref{gradient 2}) then provides the desired estimate.

\subsection{$\mathcal C^\alpha$ estimates for the normal-tangential derivatives}
\label{normal tangential} In this paragraph, we explain the following estimate, cf  \cite[Propositions~2.4\&2.5]{GS}. The argument is somehow simplified here because we can choose an angle $2\pi \beta <\pi$; this will simplify the application of  Lemma~\ref{gradient 3}.  Let $p,q\in {B'_\beta}^*$ and let $d=d_{\gk}(p,q)$; then

\begin{equation}
\label{normal}
\sum_{i=1}^{2}\sum_{j=3}^{2n} \left|D_iD_ju(p)-D_iD_ju(q)\right|  \le C(n,\alpha) \Big[ d\|u\|_{L^{\infty}(B_\beta)}+d^\alpha\|f\|_{\mathcal C^{\alpha}(B_\beta)}\Big].
\end{equation}
Again, we will only highlight the main steps, and focus on the $i=1$ case; i.e. we estimate the Hölder constant of $\d_rD_ju$ for any $j\ge 3$. The case $i=2$, i.e. estimating $\frac{1}{\beta r}\d_\theta D_ju$ is very similar. Borrowing the notation from \textsection~\ref{subsec tangential}, Lemma~\ref{gradient 2} shows that the harmonic function $h_\k$ satisfies 
\begin{equation}
\label{gradient 4}
\sup_{U_{\k+2}}|\nabla^{\gk}D''h_\k| \le   C(n) \omega(\lambda^\k).
\end{equation}
Similarly to \eqref{u = lim u_k}, we have 
\begin{equation}
\label{u = lim u_k 2}
\lim_{\k \to +\infty} \d_r u_\k(p)=\d_r u(p); \quad \lim_{\k \to +\infty} \d_rD'' u_\k(p)=\d_rD''u(p), 
\end{equation}
cf \cite[Lemma~2.10]{GS}. To estimate $\d_rD''u(p)-\d_rD''u(q)$, we fix the integer $\k$ so that $d\simeq \lambda^{\k+3}$ and we need to analyze the analogous terms $\text{(I)'}:=\d_rD''u(p)-\d_rD''u_\k(p)$, \[\text{(II)'}:=\d_rD''u_\k(p)-\d_rD''u_\k(q),\] 
and $\text{(III)'}:= \d_rD''u_\k(q)-\d_rD''v_\k(q)$. The first term is dealt with just as in \textsection~\ref{subsec tangential} and the third one relies on the same arguments as before along with \eqref{gradient 4}-\eqref{u = lim u_k 2}, cf \cite[Lemma~2.11]{GS}. In the following, we thus focus on $\text{(II)'}$. As for its analog $\text{(II)}$, the key point is to estimate
 \[\text{(II)''}:=\d_rD''h_j(p)-\d_rD''h_j(q),\] 
for any $2\le j \le \k-1$ since $u_2$ is almost harmonic on $B_{\tilde p}(\lambda)$ and Lemma~\ref{gradient 3} shows that the $\gk$-gradient of $\d_rD''u_2$ is bounded on that ball. Set $w_j:=\d_rD''h_j$, defined on $U_{j+1}$. We distinguish two cases. 

\smallskip

$\bullet$ {\it Case 1}. $U_\k$ is centered on the divisor.

\noindent 
In particular, any $U_j$ ($2\le j \le \k-1$) is centered on the divisor as well. The estimate \eqref{gradient 6} in Lemma~\ref{gradient 3} shows that for $\beta$ small enough 
($\beta<\frac 12$ would suffice), $\nabla^{\gk}w_j$ is bounded on $B_{\tilde p}(\frac 32 \lambda^{j}) \supset B_p(2d)$ by $C(n)\lambda^{-j}\omega(\lambda^j)$. 

\smallskip 

$\bullet$ {\it Case 2}. $U_\k$ is centered at $p$.

\noindent
Necessarily, we have $r(p)\ge \lambda^\k \simeq 8d$. Along a geodesic $\gamma(t)$ joining $p$ to $q$, we have the inequality $r(\gamma(t))\ge \frac {r(p)}2$ since the distance from $p$ to a point $p'$ with $r(p')\le r(p)/2$ is at least $r(p)/2 \ge 2d$. The geodesic $\gamma$ lies in $B_p(\lambda^\k)$ hence equation~\eqref{gradient 5} in Lemma~\ref{gradient 2} shows that for any $j$, $\nabla^{\gk}w_j$ is bounded by $C(n)\lambda^{-j}\omega(\lambda^j)$ along $\gamma$. 

\medskip
The case by case analysis above has therefore shown that $|w_j(p)-w_j(q)| \le C(n) \lambda^{-j}\omega(\lambda^j) \cdot d$ and we can conclude as in \textsection~\ref{subsec tangential}. 
%

\subsection{Strong Schauder estimate}

In this section, we intend to improve Theorem~\ref{Schauder conique} by controlling the $\mathcal C^{\alpha}$ norm of the non-mixed derivatives of order two of a solution $u$ of the equation $\Delta_{\gk}u=f$, that is to get an estimate of $\|\nabla^2_{\gk}u\|_{\mathcal C^{\alpha}}$. 
As we will later see, it all comes down to the following one-dimensional problem.

\begin{prop}
\label{schauder dim 1}
Assume that $n=1$. Let $u\in L^{\infty}(B_\beta)$ solve $\Delta_{\gk}u=f$ for some $f\in \mathcal C^{\alpha}(B_\beta)$. Then $u\in \mathcal C^{2,\alpha}(B_\beta)$ and there exists a constant $C=C(n,\alpha)$ such that for all $\beta\in (0,\frac 14]$ one has 
\begin{equation}
\label{eq schauder} 
\|u\|_{\mathcal C^{2,\alpha}(B_\beta')} \le C \left( \|f\|_{\mathcal C^{\alpha}(B_\beta)}+\|u\|_{\mathcal C^0(B_\beta)}\right).
\end{equation}
\end{prop}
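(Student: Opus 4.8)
\emph{Strategy.} The plan is to reduce the statement to an interior Schauder estimate for the \emph{flat} Laplacian on a two–dimensional cone, uniform in the cone angle, and then to handle that by separating angular Fourier modes. Writing $r=|z_1|^\beta$ and $\theta=\arg z_1$, the map $w=z_1^\beta=re^{i\beta\theta}$ identifies $(B_\beta^*,\widehat g_\beta)$ isometrically with the punctured disc $\{0<|w|<\tfrac12\}$ inside the flat cone $C_{2\pi\beta}$ of total angle $2\pi\beta$: indeed $|dw|^2=dr^2+\beta^2r^2d\theta^2=\widehat g_\beta$, so $\Delta_{\widehat g_\beta}$ becomes the flat Laplacian of $C_{2\pi\beta}$ and the norms $\mathcal C^\alpha$, $\mathcal C^{2,\alpha}$ are literally the same objects on both sides. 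Away from $r=0$ the metric is smooth and nondegenerate, so $u$ is automatically smooth there by interior elliptic regularity; the whole content is the behaviour at the vertex, uniformly as $\beta\to0$. Since $B_\beta$ is centred at the vertex, I would split $u=u_0+u_\perp$, where $u_0(r)=\tfrac1{2\pi}\int_0^{2\pi}u(r,\theta)\,d\theta$ and $u_\perp$ is the zero–angular–mean remainder, and correspondingly $f=f_0+f_\perp$; because the radial arcs $\{\theta=\mathrm{const}\}$ are geodesics of $\widehat g_\beta$, angular averaging does not increase Hölder constants, so $\|f_0\|_{\mathcal C^\alpha},\|f_\perp\|_{\mathcal C^\alpha}\le C\|f\|_{\mathcal C^\alpha}$ with an absolute $C$.

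The radial piece carries no $\beta$ at all: $u_0$ solves $(ru_0')'=rf_0$, hence $u_0'(r)=\tfrac1r\int_0^r sf_0(s)\,ds$ and $u_0''=f_0-\tfrac1r u_0'$. Writing $\tfrac1r u_0'(r)=\int_0^1 t f_0(tr)\,dt$ one reads off directly that $\|u_0\|_{\mathcal C^0}+\|u_0'\|_{\mathcal C^0}+\|u_0''\|_{\mathcal C^\alpha}\le C(\alpha)\big(\|f\|_{\mathcal C^\alpha}+\|u\|_{\mathcal C^0}\big)$; no non-mixed second derivative issue arises here since $u_0$ only has a $\partial_r^2$ component.

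The substance is $u_\perp$, which solves $\Delta_{\widehat g_\beta}u_\perp=f_\perp$ with $\int_0^{2\pi}u_\perp(r,\cdot)=0$ and $\|u_\perp\|_{\mathcal C^0}\le 2\|u\|_{\mathcal C^0}$. The decisive structural point is that on mean–zero functions the angular operator $-\tfrac1{\beta^2r^2}\partial_\theta^2$ is strongly coercive, with spectral gap $\ge\tfrac1{\beta^2r^2}$, a quantity that \emph{grows} as $\beta\to0$. I would make this quantitative in the Fourier variable: write $u_\perp=\sum_{k\ne0}u_k(r)e^{ik\theta}$, so each $u_k$ solves $u_k''+\tfrac1r u_k'-\tfrac{k^2}{\beta^2 r^2}u_k=f_k$, an Euler-type ODE whose homogeneous solutions are $r^{\pm|k|/\beta}$ with exponent $|k|/\beta\ge 1/\beta\ge4$. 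Representing the bounded solution by variation of parameters against this explicit radial Green's kernel gives, on $\{r<\tfrac14\}$, weighted pointwise and $\mathcal C^\alpha$ bounds on $u_k,u_k',u_k''$ carrying a gain that is geometric in $|k|/\beta$ (from the factor $r^{|k|/\beta}$, combined with the $|k|^{-\alpha}$ decay of the angular Fourier coefficients of the Hölder function $f_\perp$), so that summing over $k$ reconstitutes $u_\perp$ together with all of its second derivatives $\partial_r^2u_\perp$, $\tfrac1{\beta^2r^2}\partial_\theta^2u_\perp$, $\partial_r\!\big(\tfrac1{\beta r}\partial_\theta u_\perp\big)$, $\tfrac1{\beta r}\partial_\theta\partial_r u_\perp$ in $\mathcal C^\alpha(B'_\beta)$, with bound $C(\|f\|_{\mathcal C^\alpha}+\|u\|_{\mathcal C^0})$ and $C$ independent of $\beta$. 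A more geometric variant, useful to double-check the delicate estimates, is a barrier/maximum-principle argument on the balls $B_p(c\,r(p))$: coercivity of the angular Laplacian on mean–zero functions forces $|u_\perp|+r(p)\,|\nabla^{\widehat g_\beta}u_\perp|+r(p)^2\,|\nabla^2_{\widehat g_\beta}u_\perp|\lesssim\|u_\perp\|_{\mathcal C^0}+r(p)^2\,[\mathrm{osc}\,f_\perp]$ uniformly in $\beta$, after which the usual rescaled Euclidean-model Schauder estimate at scale $r(p)$ closes the argument; here the hypothesis $2\pi\beta\le\tfrac\pi2$ is what keeps the geometry of those balls under control.

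The main obstacle is exactly the uniformity in $\beta$ of the three non-mixed second derivatives $\partial_r^2u$, $\tfrac1{\beta^2 r^2}\partial_\theta^2u$, $\partial_r\partial_\theta u$ — precisely the ones deliberately omitted from the weak estimate of Theorem~\ref{Schauder conique} — because as $\beta\to0$ the geodesic circles $\{r=\rho\}$ shrink to circumference $2\pi\beta\rho$ and the geodesic balls wrap around the vertex, so the naive disc model for harmonic functions degenerates and the classical Schauder constant blows up. The resolution is to turn that very collapse into a gain, via the strong coercivity and the $r^{|k|/\beta}$ decay of the mean-zero modes; converting this into uniform $\mathcal C^\alpha$ control of the second derivatives requires summing a Fourier series for which Hölder norms do not split termwise, and carefully tracking the interplay between the $|k|^{-\alpha}$ decay of $f_k$ and the $r^{|k|/\beta}$ decay of the homogeneous solutions. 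This bookkeeping, rather than any single estimate, is where the real work lies.
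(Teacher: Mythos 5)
Your strategy agrees with the paper's only in the initial reduction: both split off the radial average $u_0$ (solved explicitly from $(\partial_r^2+r^{-1}\partial_r)u_0=f_0$) and reduce to a mean-zero remainder $u_\perp$. From there you take a genuinely different route --- a full angular Fourier decomposition, Euler ODEs $u_k''+r^{-1}u_k'-(k/\beta)^2 r^{-2}u_k=f_k$, and a termwise resummation --- whereas the paper never touches individual $k\neq0$ modes. It instead proves the weighted sup bound $\|u/r^{2+\alpha}\|_{\cC^0(B'_\beta)}\le C\|f\|_{\cC^\alpha(B_\beta)}$, with $C$ independent of $\beta$, directly from the Euclidean Green function in the variable $\rho^{-1/\beta}z_1$ (using that mean-zero data satisfy $|f|\lesssim r^\alpha\|f\|_{\cC^\alpha}$), and then converts this decay into the full Schauder estimate via the conformal change $\gk=r^2g_c$ with $g_c=dt^2+\beta^2d\theta^2$ the cylinder metric ($t=\log r$), whose geometry is bounded uniformly in $\beta$; the factor $r^{2+\alpha}$ exactly absorbs the conformal weight $r_0^{-2-\alpha}$ that appears when pulling the cylinder Schauder estimate back to $\gk$.

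Your Fourier route is plausible and could be made to work, but as written it has a genuine gap, and part of the mechanism is misdiagnosed. The ``geometric gain $r^{|k|/\beta}$'' only controls the homogeneous (boundary) contribution and the off-diagonal part of the Duhamel integral; near the diagonal $s\approx r$ the Euler Green kernel gains only a polynomial factor $\approx(\beta/|k|)^2$, and it is this, combined with $|f_k(r)|\lesssim(\beta r/|k|)^\alpha[f]_\alpha$ (the $(\beta r)^\alpha$ factor, not the bare $|k|^{-\alpha}$, is what matters at the vertex), that makes the series summable and produces the decisive weight $r^{2+\alpha}$ in front of $u_\perp$. You correctly flag that Hölder seminorms do not split termwise, but this is precisely the step that has to be carried out --- a splitting at $|k|\sim|\delta\theta|^{-1}$ together with tracking of how $(\beta/|k|)^2$ competes with the factor $(k/(\beta r))^\alpha$ coming from differencing $e^{ik\theta}$ at metric scale $\beta r|\delta\theta|$ --- and it is not. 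Finally, your barrier variant is also incomplete as stated: the estimate $|u_\perp|+r(p)|\nabla u_\perp|+r(p)^2|\nabla^2 u_\perp|\lesssim\|u_\perp\|_{\cC^0}+r(p)^2[\mathrm{osc}\,f_\perp]$ contributes, after rescaling at scale $r(p)$, a term $r(p)^{-2-\alpha}\|u_\perp\|_{\cC^0}$ to the local Hölder bound, which blows up as $r(p)\to0$ unless one already knows $|u_\perp|\lesssim r^{2+\alpha}\|f\|_{\cC^\alpha}$. That improved decay is the one ingredient your sketch never explicitly produces, and it is exactly what the paper's Step 2 supplies before the cylinder argument closes the proof.
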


\begin{cor}[Strong Schauder estimate]
\label{cor strong schauder}
The full Schauder estimate \eqref{eq schauder} holds in any dimension. 
\end{cor}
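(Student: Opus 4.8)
The plan is to bootstrap from the weak estimate (Theorem~\ref{Schauder conique}) and the one-dimensional strong estimate (Proposition~\ref{schauder dim 1}); the case $n=1$ is exactly Proposition~\ref{schauder dim 1}, so I assume $n\ge 2$. By Theorem~\ref{Schauder conique}, all the quantities entering the $\mathcal{C}^{2,\alpha}(B_\beta')$-norm are already controlled by $C(\|f\|_{\mathcal{C}^\alpha(B_\beta)}+\|u\|_{\mathcal{C}^0(B_\beta)})$ except the $\mathcal{C}^\alpha$-seminorms of the purely normal second derivatives $D_1^2u$, $D_1D_2u$, $D_2D_1u$, $D_2^2u$, so it suffices to bound these. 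Set $g:=\Delta_{\wh g_\beta}u=f-\Delta_{\mathbb{C}^{n-1}}u$; since $\Delta_{\mathbb{C}^{n-1}}u=\sum_{j\ge 3}D_j^2u$, Theorem~\ref{Schauder conique} gives $\|g\|_{\mathcal{C}^\alpha}\le C(\|f\|_{\mathcal{C}^\alpha(B_\beta)}+\|u\|_{\mathcal{C}^0(B_\beta)})$ on a slightly smaller ball, which after a routine covering/rescaling I may as well keep calling $B_\beta$.

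First I would freeze the tangential variable. For fixed $z'\in\mathbb{C}^{n-1}$ the slice $u(\cdot,z')$ solves the one-dimensional equation $\Delta_{\wh g_\beta}u(\cdot,z')=g(\cdot,z')$ on a ball of the $2\pi\beta$-cone, and restricting a function to a slice of the product metric $\gk=\wh g_\beta\oplus g_{\mathbb{C}^{n-1}}$ does not increase any of the Hölder norms involved. Hence Proposition~\ref{schauder dim 1} applies with a constant independent of $z'$ and of $\beta$, and yields $\sup_{z'}\|u(\cdot,z')\|_{\mathcal{C}^{2,\alpha}_{\wh g_\beta}}\le C(\|f\|_{\mathcal{C}^\alpha(B_\beta)}+\|u\|_{\mathcal{C}^0(B_\beta)})$ on a smaller ball. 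This already furnishes the $\mathcal{C}^0$-bounds for $D_1^2u$, $D_1D_2u$, $D_2D_1u$, $D_2^2u$, as well as the bound on their Hölder seminorms for increments taken inside a single slice $\{z'=\mathrm{const}\}$.

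What remains — and this is the crux — is the Hölder continuity of these second derivatives in the tangential variable $z'$. Here I would first use the elementary fact that, for a product metric, the $\mathcal{C}^\alpha$-seminorm of a function is dominated by the supremum over slices of its horizontal $\mathcal{C}^\alpha$-seminorm plus the supremum over the normal variable of its vertical one; the horizontal part was just handled, so one is reduced to showing that the maps $z'\mapsto D_1^2u(r,\theta,z')$, etc., are $\mathcal{C}^\alpha$ uniformly in $(r,\theta)\in B_\beta'$. For $(r,\theta)$ at definite distance from $D$ this is standard interior Schauder on a ball of definite size, since there $\gk$ is flat and $D_1,D_2$ are rotated coordinate derivatives. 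Near $D$ a crude difference argument fails — the $\mathcal{C}^\alpha$-norm along slices of $g(\cdot,z'_p)-g(\cdot,z'_q)$ is not $O(|z'_p-z'_q|^\alpha)$ — and I would instead re-run, essentially verbatim, the telescoping-scales argument of \textsection\ref{normal tangential}: for a point $p$ near $D$ and its associated balls $U_\kappa$ one writes $D_1^2u=D_1^2u_1+\sum_\kappa D_1^2h_\kappa$ with $h_\kappa=u_{\kappa+1}-u_\kappa$ harmonic, estimates $\nabla^{\gk}(D_1^2h_\kappa)$ by the $\ell=0$ cases of Lemma~\ref{gradient 2} and Lemma~\ref{gradient 3}, and concludes exactly as in \textsection\ref{normal tangential}; the derivatives $D_2^2u$ and $D_1D_2u$ are then recovered from $D_1^2u$, from $\tfrac1rD_1u$ (which is bounded and, near $D$, Hölder in $z'$ by Lemma~\ref{gradient}), and from the equation $\Delta_{\wh g_\beta}u=g$. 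The decisive — and only genuinely delicate — ingredient is that Lemma~\ref{gradient}, Lemma~\ref{gradient 2} and Lemma~\ref{gradient 3}, hence the entire argument, are uniform as $\beta\to0$; granting this, the proof of Theorem~\ref{Schauder conique} transfers and produces \eqref{eq schauder} with a constant depending only on $n$ and $\alpha$.
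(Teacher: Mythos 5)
Your overall strategy coincides with the paper's: split the missing H\"older control of the pure normal second derivatives into a \emph{normal} increment (same $z'$, handled by slicing and Proposition~\ref{schauder dim 1}) and a \emph{tangential} increment (same $z_1$, handled by re-running the telescoping-scales argument of \textsection\ref{normal tangential}). The paper writes $v(x)-v(y)=(v(x)-v(t))+(v(t)-v(y))$ with the intermediate point $t=(z_2,z_1')$, which is exactly your decomposition in a different notation, and then invokes \eqref{gradient 5}--\eqref{gradient 6} for the tangential piece.

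There is, however, one concrete error in your final sentence. The equation $\Delta_{\wh g_\beta}u=g$ reads $D_1^2u+\tfrac1rD_1u+D_2^2u=g$; it involves only the diagonal entries of the normal Hessian. Hence once $D_1^2u$ (and the lower-order $\tfrac1rD_1u$) is controlled in $z'$-H\"older, you do recover $D_2^2u$, but the mixed component $D_1D_2u=\nabla^2_{12}u$ does not appear in the trace and cannot be ``recovered'' from the equation. It is a genuinely independent piece of the Hessian (in $w=z_1^\beta$ coordinates it sits inside the trace-free $(2,0)+(0,2)$ part, not the Laplacian) and must be treated by the \emph{same} telescoping argument you apply to $D_1^2u$, i.e.\ estimate $D''(D_1D_2u)$ using the $\ell=0$ cases of Lemmas~\ref{gradient 2}--\ref{gradient 3}. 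The paper's proof is itself terse here (it names only $\partial_r^2u$ and $D_2^2u$ ``the other one being entirely similar''), but at least it does not make the false ``recover from the equation'' claim; in your write-up you should replace that sentence by noting that $D_1D_2u$ is handled identically to $D_1^2u$, and only $D_2^2u$ is then deduced from the equation.
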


Here, the $\mathcal C^{2,\alpha}$ norm $D\subset B_\beta$ is defined by 
\[  \|u\|_{C^{2,\alpha}} =  \sup_{B_\beta} \sum_{0\leq j\leq 2} |\nabla^j u|_{\gk} + [ \nabla^2 u]_\alpha\]
where
\begin{equation}
\label{alpha}
[v]_{\alpha} = \sup_{x,y \in B_\beta; |r(x)-r(y)| < \frac{r(x)}2} \frac{|v(x)-v(y)|}{d_{\gk}(x,y)^\alpha}.
\end{equation}

This Hölder semi-norm is quite convenient to manipulate as we will see later, and it is well-known that it is equivalent to the usual Hölder semi-norm $\|v\|_{\alpha}:=  \sup_{x,y } \frac{|v(x)-v(y)|}{d_{\gk}(x,y)^\alpha}$.

\begin{rem}\label{rem:strong-scha-estim}
  By classical arguments (see e.g. \cite[\textsection~4.2]{Don}, \cite[\textsection~3.5]{GS2}), the result of Corollary \ref{cor strong schauder} for the flat cone metric extends to perturbations $\Delta_{\gk}+ a_\beta \cdot \nabla^2 + b_\beta \cdot \nabla$, provided the tensors $a_\beta$ and $b_\beta$ are small enough in $C^\alpha$ norm. It is easy to check that the uniformity with respect to $\beta$ is preserved provided $a_\beta$ and $b_\beta$ are uniformly small, more precisely for some $\varepsilon=\varepsilon(n,\alpha)>0$ small enough, one has for all $\beta\in(0,\frac14]$
  \[ \|a_\beta\|_{C^\alpha(\gk)} + \|b_\beta\|_{C^\alpha(\gk)} < \varepsilon. \]

\end{rem}

\begin{proof}[Proof of Proposition~\ref{schauder dim 1}]
The proof of Proposition~\ref{schauder dim 1} consists in three steps. First, we show that is is enough to prove the estimate for functions $u$ which vanish on $\partial B_\beta$ and whose integral on every circle $\{r=\mathrm{cst}\}$ is zero. Next, we show that for such functions $u$, the norm $\|u/r^{2+\alpha}\|_{\mathcal C^0}$ is controlled by $\|u\|_{\cC^0}+\|f\|_{\cC^\alpha}$. Finally, we combine the previous results and Schauder's estimate for the cylindrical metric to conclude. \\

$\bullet$ {\it Step 1. The reduction step}. First, we decompose $u=h+v$ where $h$ is harmonic on $B_\beta$ with $h|_{\partial B_\beta}=u|_{\partial B_\beta}$ and $v$ solving $\Delta_{\gk}v=f$, $v|_{\partial B_\beta}\equiv 0$. All the usual derivatives of $h$ are controlled by it sup norm, itself controlled by its boundary value, hence by $\|u\|_{\cC^0}$. Moreover, the formulas \eqref{dr2}-\eqref{dt2} show that $\|\nabla^3_{\gk}h\|_{\cC^0}$ is controlled by $\beta^{-3}r^{1/\beta-3} \|\nabla^3_{\rm eucl} h\|_{\cC^0}$. Therefore, Schauder's estimate for $v$ implies that for $u$.

Next, we write $\mathbb C^*=\mathbb R_+^*\times  S^1$ and we expand $v$ in Fourier series $v=v_0(r)+\tilde v$ where $\tilde v= \sum_{n\ge 1} v_n(r) e^{in\theta} $. The function $\tilde v$ has integral zero on each circle, hence $\tilde v|_{\partial B_\beta} \equiv 0$. As $v_0(r)=\frac{1}{2\pi r}\int_{|z|=r}v$, both $\|v_0\|_{\cC^0}$ and $\|\tilde v\|_{\cC^0}$ are controlled by $\|v\|_{\cC^0}$. 
Since $\Delta_{\gk}$ respects the decomposition, the Fourier series expansion of $f=f_0(r)+\tilde f$ is given by $\Delta_{\gk}v_0+\Delta_{\gk}\tilde v$. It is easy to check that $\|f_0\|_{\cC^\alpha}\le \|f\|_{\cC^\alpha}$. From this one infers two things: first, $\|v_0\|_{\cC^{2,\alpha}}$ is under control (e.g. by explicitly solving the ODE $(\d^2_r+r^{-1}\d_r)v_0=f_0$) and next,  $\tilde f$  is $\cC^\alpha$ and $\|\tilde f\|_{\cC^\alpha}$ is under control as well.  

Therefore, Schauder's estimate for $\tilde v$ implies Schauder's estimate for $v$, hence for $u$ as well. This shows that it is enough to restrict ourselves to functions $u$ which vanish on $\partial B_\beta$ and whose integral on each circle $\{|z|=r\}$ is zero.  \\

$\bullet$ {\it Step 2. The improved uniform estimate}. In this step, we show that for $u$ satisfying $u|_{\partial B_\beta}\equiv 0$ and $\int_{|z|=r}u=0$ for any $r$, then there exists a constant $C>0$ independent of $\beta$ such that 
\begin{equation}
\label{improved}
\|u/r^{2+\alpha}\|_{\cC^0(B'_\beta)} \le C \|f\|_{\cC^\alpha(B_\beta)}. 
\end{equation}
Set $\rho:=\frac 12$. From \eqref{green}, we have 
\begin{equation}
\label{green2}
u(z)= \rho^2\int_{|w|<1}\frac{\beta^2}{|w|^{2(1-\beta)}} f(\rho^{1/\beta} w)\log\left|\frac{\rho^{-1/\beta}z-w}{1-\rho^{-1/\beta}\bar z }\right| |dw|^2.
\end{equation}
The integral of $u$ along any circle $\{|w|=s\}$ is zero, so the same is true for $f$. This implies that 
\begin{align*}
u(z)&= \rho^2\int_{|w|<1}\frac{\beta^2}{|w|^{2(1-\beta)}} f(\rho^{1/\beta} w)\log\left|\frac{\rho^{-1/\beta}z/w-1}{1-\rho^{-1/\beta}\bar z\bar w }\right| |dw|^2\\
&=\beta^2|z|^{2\beta}\int_{|t|>\rho^{-1/\beta}|z|} f(\rho^{1/\beta} w) \log\left|\frac{t-1}{1-\rho^{-2/\beta} |z|^2/t }\right| \frac{ |dt|^2}{|t|^{2+2\beta}} 
\end{align*}
after performing the change of variables $t:=\rho^{-1/\beta}z/w$. Since $f(0)=0$, we have $|f| \le C_\alpha \|f\|_{\cC^\alpha} \cdot r^\alpha$ and therefore 
\[\left|\frac{u(z)}{r^{2+\alpha}}\right| \le  C_\alpha \|f\|_{\cC^\alpha}\cdot  \underbrace{\beta^2\int_{|t|>\rho^{-1/\beta}|z|} \log\left|\frac{t-1}{1-\rho^{-2/\beta} |z|^2/t }\right| \frac{ |dt|^2}{|t|^{2+2\beta}}}_{=:I(z)}.\]
We are left to bounding the integral $I(z)$ uniformly for all $\beta$ and all $z\in B_\beta'$. 
When $|t|$ is very small, say $|t| \le \ep$, then $\rho^{-1/\beta} |z| < \ep$ and the log term is a $O(t)+O(\rho^{-1/\beta}|z|)=O(t)$ hence this portion of the integral is dominated by $\beta^2\int_{s=0}^{\ep}\frac{ds}{s^{2\beta}}=O(1)$. 
For the rest of the integral, we first observe that for $z\in B_\beta'$, we have $|\rho^{-2/\beta} |z|^2/t| \le \rho^{1/\beta}$ hence \[\beta^2\int_{|t|>\ep}- \log |1-\rho^{-2/\beta} |z|^2/t|\frac{ |dt|^2}{|t|^{2+2\beta}} \le \beta^2\int_{s=\ep}^{+\infty}\frac{ds}{s^{1+2\beta}} = O(1).\] 
We are left to estimating $\beta^2\int_{|t|>\ep}\log|t-1| \frac{ |dt|^2}{|t|^{2+2\beta}}$. The region $\ep\le |t|\le 2$ is trivially dealt with, while the remaining region is estimated by \[\int_{s=2}^{+\infty}\frac{\log s}{s^{1+2\beta}} ds=\frac{1}{\gamma \beta}\Big[s^{-\gamma \beta}(\log s-\frac{1}{\gamma \beta})\Big]_2^{+\infty}=O(\beta^{-2})\]
where $\gamma=(2+\alpha)$. The estimate \eqref{improved} is now proved.  \\

$\bullet$ {\it Step 3. Schauder estimates for the cylinder.}
Set $t:=\log r$ so that $\gk=r^2 g_c$ where $g_c:=dt^2+\beta^2d\theta^2$ to be the cylindrical metric on $\mathbb R \times S^1$, where the circle has length $2\pi \beta$. It is complete with bounded curvature hence it satisfies uniform Schauder estimates independent of $\beta$ and the chosen ball of a given radius (small balls may not be simply connected but one can pass to the universal cover).   

Let us pick an arbitrary point $x_0\in B_\beta'$ and set $r_0:=r(x_0)$. We define the regions $D:=\{|t-t_0|<2\}$ and $D':=\{|t-t_0|<1\}$; these depend on the base point $x_0$. On $D$, the function $\frac{r}{r_0}=e^{t-t_0}$ has bounded $g_c$-derivatives at every order (and the same holds for its inverse), and these bounds are independent of $x_0$. 
On $D$, we have $\|\nabla^j_{g_c} u\|_{\cC^0} \sim r_0^j \|\nabla^j_{\gk} u\|_{\cC^0}$ and $\|v\|_{\cC^{\alpha}_{g_c}(D)} \sim r_0^{\alpha} \|v\|_{\cC^{\alpha}_{\gk}(D)}$ by the definition of the Hölder norm for $\gk$, cf \eqref{alpha}. By the same token, $\|r^2f\|_{\cC^{\alpha}_{g_c}(D)} \sim r_0^{2+\alpha} \|f\|_{\cC^{\alpha}_{\gk}(D)}$.

This implies that
\begin{align}
\label{C2a}
 \|u\|_{\cC^{2,\alpha}_{\gk}(D')} & \lesssim r_0^{-2-\alpha} \|u\|_{\cC^{2,\alpha}_{g_c}(D')}\\
 & \le C(\|u/r_0^{2+\alpha}\|_{\cC^0(D)}+ \|f\|_{\cC^{\alpha}_{\gk}(D)}) \nonumber
\end{align}
where the last inequality follows from Schauder estimates for the cylindrical metric, since $\Delta_{g_c}u=r^2f$. Putting \eqref{improved} and \eqref{C2a} together, we conclude that $ \|u\|_{\cC^{2,\alpha}_{\gk}(D')} \le C \|f\|_{\cC^{\alpha}_{\gk}(D)}$ for some constant $C$ independent of $\beta$ and $x_0$. By varying the point $x_0$ across $B_\beta'$ and using the first reduction step, we obtain the proposition. 
\end{proof}
  
  \begin{proof}[Proof of Corollary~\ref{cor strong schauder}]
  We are left to showing that the $\mathcal C^\alpha$ norms of $\d_r^2u$ and $\frac{1}{\beta r^2}\d^2_\theta u$ are controlled by $\|u\|_{\cC^0}+\|f\|_{\cC^\alpha}$. We will treat the term $v:=\d_r^2u$, the other one being entirely similar. Let $x,y\in B_\beta'$ which we write $(z_1, z_1')$ and $(z_2, z_2')$ where $z_i'\in \mathbb C^{n-1}$ for $i=1,2$. We set $t=(z_2, z_1')$ and decompose the difference $v(x)-v(y)=(v(x)-v(t))+(v(t)-v(y))$. On the slice $S_{z_1'}:=\mathbb C^*\times \{z_1'\}$, the function $u$ satisfies $\Delta_{\hat g_\beta} u = f-\Delta_{\mathbb C^{n-1}}u$ and the $\cC^\alpha$ norm of the RHS is controlled by Theorem~\ref{Schauder conique}. By Proposition~\ref{schauder dim 1}, we get that 
  \[|v(x)-v(t)|\le C d_{\hat g_\beta}(z_1, z_2)^\alpha\le Cd_{\gk}(x,y)^\alpha\] where $C$ is some uniform multiple of $\|u\|_{\cC^0}+\|f\|_{\cC^\alpha}$. 
  
  We now have to take care of $v(t)-v(y)$. The geodesic from $t$ to $y$ lies in the "euclidean slice" $\tilde S_{z_2}=\{z_2\}\times \mathbb C^{n-1}$ so we only have to estimate $D''v=\d_r^2D''u$ along $\tilde S_{z_2}$. But this is entirely similar to what has been done in \textsection~\ref{normal tangential} by using almost harmonic approximations and relying on \eqref{gradient 5}-\eqref{gradient 6} depending on whether $d_{\gk}(t,y)=d_{\rm eucl}(z_1',z_2')$ dominates or not $r(t)=r(y)=|z_2|^\beta$. 
  \end{proof}
\section{Schauder estimate for collapsed metrics}
\label{sec:semi-local-schauder}

In this section we establish the uniform Schauder estimates which are at the heart of our argument. The important point here is the existence of different scales at which we can look at the geometry, since the metrics $g_{\beta,L}$ (hence $g_\beta$) collapse much quicker in the circle direction of the bundle $L$ over $D$ (scale $\beta$) than in the directions of $D$ (scale $\sqrt \beta$). Since we have here two different speeds for the collapsing, we call this geometry `2-collapsed'. We will also need to consider an intermediate '1-collapsed' geometry, more precisely:
  \begin{itemize}
  \item \textbf{1-collapsed geometry}: this is a scale at which the circle collapses but the divisor does not collapse; this roughly amounts to consider the metrics $\frac1\beta g_\beta$, which turn out to have controled curvature so that we can obtain Schauder estimates from standard arguments far from the divisor $D$, and near the divisor $D$ with the conical singularities from the estimates developed in section \ref{sec:unif-scha-estim}; this is done in section \ref{sec:local-scha-estim};
  \item \textbf{2-collapsed geometry}: this is the scale of $g_{\beta,L}$, and it turns out that in our problem we need estimates at this scale, on balls of fixed radius, say $\varrho$, for $g_{\beta,L}$; this corresponds to balls of larger and larger radius $\frac \varrho{\sqrt \beta}$  in the 1-collapsed geometry of $\frac1\beta g_{\beta,L}$; we obtain these estimates in section \ref{sec:semi-local-schauder-1} from a global estimate on some limit of $\frac1\beta g_{\beta,L}$.
  \end{itemize}

\subsection*{Notation}
Since we have a lot of constants appearing in our estimates, from now we simplify the notation by introducing the relations \[ A \lessapprox B \quad\text{ resp. } \quad A\approx B\]
defined by the fact that there is some constant $\mathfrak{C}_n$ depending only on the dimension $n$ (and certainly not on $\beta$) such that
\[ A \leq \mathfrak{C}_n B \quad\text{ resp. }\quad \mathfrak{C}_n^{-1}B \leq A \leq \mathfrak{C}_nB. \]

\subsection{Functional spaces}
\label{sec:functional-spaces}

We now define the functional spaces in which we will solve the equation. These are weighted Hölder spaces.

With the notation of section~\ref{sec:geometry}, we define a weight on $X$ by
\begin{equation}
  \label{eq:30}
  w_\beta = \chi(-\tilde u) - (1-\chi(-\tilde u)) \tilde u
\end{equation}
so that $w_\beta=1$ for $u\leq -2$ and $w_\beta=\tilde u$ for $u\geq-\frac12$.

Fix a real number $\delta$. For any section $f$ of a tensor bundle, we define the weighted norm (which depends on $\beta$):
\begin{equation}
  \label{eq:32}
  \|f\|_{C^{\ell,\alpha}_\delta} = \sup \sum_{0\leq j\leq \ell} w_\beta^{\delta+\frac j2(1+\frac1n)}|\nabla^jf|_{g_\beta} + [w_\beta^{\delta+\frac \ell2(1+\frac1n)}\nabla^\ell f]_\alpha
\end{equation}
where the semi-norm $[f]_\alpha$ is also weighted:
\begin{equation}
  \label{eq:31}
  [f]_\alpha = \sup_{d_{g_\beta}(x,y) < \rho(\beta w_\beta^{\frac1n}(x))^{\frac12}} \big(\beta w_\beta^{\frac1n}\big)^{\frac \alpha2} \frac{|f(x)-f(y)|}{d_{g_\beta}(x,y)^\alpha}.
\end{equation}
One can be surprised by these definitions, since (\ref{eq:32}) and (\ref{eq:31}) do not correspond to the same weight: roughly speaking the norm $\|f\|_{C^0}+[f]_\alpha$ is a $C^\alpha$ norm with respect to the metric $\beta^{-1}w^{-\frac1n}g_\beta$, a metric which looks like:
\begin{itemize}
\item $\beta^{-1}g_\beta$ on $u\leq -1$: this has bounded curvature by Lemma~\ref{lemma curvature};
\item $\beta^{-1-\frac1n}g_\beta$, that is $g_{TY}$ on the compact part of $X\setminus U_L$;
\end{itemize}
in both cases the point is that the geometry is controlled so that there are uniform Schauder estimates.

On the other hand, the $C^\ell$ norm defined by (\ref{eq:32}) has a different weight, motivated by the fact that $|\nabla^jw_\beta^\bullet|_{g_\beta} = O(w_\beta^{\bullet-\frac j2(1+\frac1n)})$, so it is well-adapted to functions depending on $u$ only: we will see that these are the functions on which we have the worst estimates, because the other directions collapse.

\begin{rem}\label{rem:product}
Despite the presence of the weight, one still has the usual estimate for products in Hölder spaces: $\|fg\|_{C^\alpha} \leq \|f\|_{C^\alpha} \|g\|_{C^\alpha}$. This is because for the standard Hölder norms one has actually $[fg]_\alpha \leq \|f\|_{C^0} [g]_\alpha + [f]_\alpha \|g\|_{C^0}$, so adding our weight $\beta w_\beta^{\frac1n}$ does not change the estimate.
\end{rem}

\subsection{1-collapsed Schauder estimate}
\label{sec:local-scha-estim}

From Lemma \ref{lemma curvature} we have the following bound for the curvature of $g_{\beta,L}$:
\begin{equation}
  \label{eq:34}
  | K(g_{\beta,L}) | \lessapprox \big( \frac1{(-\varphi_1'(u))^{n+1}}+\frac1{-\beta\varphi_1'(u)} \big).
\end{equation}
Since $\varphi_1'(u) \sim c_n (-u)^{\frac1n}$ when $u\rightarrow0$, then for $u<-\frac \beta A$ for some $A>0$ which will be fixed below, we have for $B=\mathrm{cst.} (1+A)$,
\begin{equation}
  \label{eq:35}
  | K(g_{\beta,L}) | \leq \frac B{-\beta\varphi_1'(u)}.
\end{equation}
Since $u=\beta t$, the region $u<-\frac \beta A$ corresponds to the region $t<-\frac1A$ in the Tian-Yau space, that is the exterior of a compact region. We choose that compact region large enough, that is $A>0$ small enough, so that the asymptotics of the Tian-Yau metric written in section \ref{sec:tian-yau-metric} are valid. From the estimate (\ref{eq:23}) on the difference $g_\beta-g_{\beta,L}$ we see that (\ref{eq:35}) remains true for $g_\beta$. For simplicity we write the sequel for $g_{\beta,L}$ but these bounds imply that our estimates will remain true for the small perturbation $g_\beta$.

Since $\varphi_1''(u) \approx e^u$ by \eqref{eq:11}, the distance $r_D$ from the divisor is of order $r_D \approx e^{\frac u2}$. Therefore a region $r_D>\varepsilon$ corresponds to $u>2\log \varepsilon+\mathrm{cst}$. 

We will also use the function $r_0(u)$ which is the distance to the point $u=0$, so that when $u\rightarrow 0$ one has \[ r_0(u) \approx |u|^{\frac12+\frac1{2n}}. \]

Note $\rho$ the injectivity radius of the metric $g_D$, and fix a finite number of balls of radius $\rho$ covering $D$. Near some $u_0<-\frac \beta A$ we can consider the rescaled metric $h=\frac{g_{\beta,L}}{-\beta\varphi_1'(u_0)}$. This is the metric where only the $S^1$ fibres are collapsed, at speed $\sqrt \beta$ (`1-collapse'). For $u\approx u_0$ the curvature of $h$ is uniformly bounded and $h \simeq \frac{\varphi_1''(u)}{-4\beta\varphi'_1(u_0)}(2du^2+\beta^2\eta^2)+g_D$. Because of the $S^1$-bundle over $D$, small balls of radius $\rho$ for $h$ are not simply connected but we can use Schauder estimates in local universal coverings. We define the domain 
\[ D_{u_0}(\tau) = \big\{ |r_0(u)-r_0(u_0)| \leq \tau \sqrt{\beta w_\beta(u_0)^{\frac 1n}} \big\} \]
and we assume the extra condition $r_D(u_0)>\varepsilon\sqrt \beta$, that is $u_0>\log \beta+2\log \varepsilon+\mathrm{cst}$.  Said otherwise, we only look at points at a fixed, positive distance to the divisor $D$ with respect to $h$. We can then check that $D_{u_0}(\rho)\subset B_{h}(u_0, C\rho)$ and that on $D_{u_0}(\rho)$, we have $\frac{u_0}{u} \lessapprox 1$ (this can be done easily by treating each case $u_0\approx -\infty, -1, 0$ separately and using the assumption $r_D(u_0)>\varepsilon\sqrt \beta$.)
These considerations lead to the following Schauder estimate for the metric $h$ outside the divisor:
\begin{equation}
  \label{eq:36}
  \| f \|_{C^{2,\alpha}_h(D_{u_0}(\frac12 \rho))}
  \lessapprox \left( \|f\|_{C^0(D_{u_0}(\rho))} + \| \Delta_h f\|_{C^\alpha_h(D_{u_0}(\rho))}
  \right)
\end{equation}
which we rewrite in terms of $g_\beta$ using the weighted norm (\ref{eq:31}):
\begin{equation}
  \label{eq:37}
    \|\nabla^2f\|_{C^\alpha_0(D_{u_0}(\frac12 \rho))} 
  \lessapprox \frac1{\beta w_\beta(u_0)^{\frac1n}}\|f\|_{C^0(D_{u_0}(\rho))} + \|\Delta_{g_\beta} f\|_{C^\alpha_0(D_{u_0}(\rho))}.
\end{equation}
Note that this estimate extends everywhere:
\begin{itemize}
\item Near the divisor we have the same by taking balls centered on
  the divisor and applying the uniform Schauder estimate established
  in section \ref{sec:unif-scha-estim} to the metric $h=\frac {g_\beta} \beta$.  More precisely, if $u_0=-\infty$ and $\rho>0$ is small enough one has $D_{-\infty}(\rho)=\{r < \rho \sqrt \beta \}$ which is topologically a disk in $\mathbb C$ times $D$.
 The change of variable $R= r/\sqrt \beta$ in (\ref{eq:13}) gives
  \[ h=\frac{g_\beta}\beta = \frac2{n+1}\big( dR^2+\beta^2R^2\eta^2\big) + g_D + O(\beta R^2). \]
  Maybe up to scaling again by a large constant, we see that we have locally a uniformly small (in the sense of Remark \ref{rem:strong-scha-estim}) perturbation of the product of the cone metric $dR^2+\beta^2R^2d\theta^2$ with the flat metric, and therefore we can apply Corollary~\ref{cor strong schauder} to get \[\| f \|_{C^{2,\alpha}_h(D_{-\infty}(\frac12 \rho))}
  \lessapprox \left( \|f\|_{C^0(D_{-\infty}(\rho))} + \| \Delta_h f\|_{C^\alpha_h(D_{-\infty}(\rho))}\right)\] and the corresponding inequality
    (\ref{eq:37}) in terms of $g_\beta$ (with the weighted norm \-- in this region we have $w_\beta\equiv1$) is also valid for $D_{-\infty}(\rho)$. Therefore, it holds  any $u_0<-\frac \beta A$. 
\item On the Tian-Yau part $X\setminus U_L$, this is the standard Schauder estimate, since the scaling factor with the Tian-Yau metric is $\beta w_\beta(u_0)^{\frac1n}=\beta^{1+\frac1n}$. 
\end{itemize}

\subsection{2-collapsed Schauder estimate}
\label{sec:semi-local-schauder-1}

Our aim now is to give an estimate more suitable for the scale of $g_\beta$, where the the divisor $D$ is also collapsed at speed $\sqrt \beta$, and the circle at speed $\beta$ (`2-collapse'). More precisely, we want to replace by a better coefficient the factor $\frac1{\beta w_\beta(u_0)^{\frac1n}}$ in (\ref{eq:37}). For $u_0$ bounded away from zero, this is just the scaling factor $\frac 1\beta$. We do this in two steps: decomposing in Fourier series along the circle, a direct application of the maximum principle gives the required estimates on nonzero modes. For zero modes, the argument is more complicated: instead of estimates on balls of radius $\frac 1{\sqrt \beta}$ as above, we would like estimates on balls of fixed radius, say $\rho$ if $u$ is far from $0$ or $-\infty$. In the rescaled metric $\frac 1\beta g_{\beta,L}$ this corresponds to a cylinder of length approximately $\frac \rho{\sqrt \beta}$, converging to an infinite cylinder. The estimates on this limit will provide the estimates we need.

For small values of $\varepsilon$ and $u_0$ away from the divisor, we define the region
\begin{equation}
  \label{eq:41}
  E_{u_0}(\varepsilon) = \{ | r_0(u)-r_0(u_0) | \leq \varepsilon r_0(u_0) \}.
\end{equation}
If $u_0=-\infty$ that is we consider the divisor $D$, we use
\begin{equation}
  \label{eq:43}
  E_{-\infty}(\varepsilon) = \{ | r_D(u) | \leq \varepsilon \}.
\end{equation}
Notice that for $\varepsilon>0$ and $u_0$ small, $\varphi_1'(u)$ and $\varphi_1''(u)$ do not vary much in $E_{u_0}(\varepsilon)$, that is remain comparable to their value at $u_0$.

The regions $E_{u_0}$ correspond to the scale of the geometry of $g_\beta$, and are therefore very large for the geometry of the previously used $h=\frac{g_{\beta,L}}{-\beta\varphi_1'(u_0)}$. We then obtain the better estimate:
\begin{prop}\label{prop:semi-local-schauder}
  \begin{equation}
       \|\nabla^2f\|_{C^\alpha_0(E_{u_0}(\frac12 \varepsilon))} 
     \lessapprox \|\Delta_{g_\beta} f\|_{C^\alpha_0(E_{u_0}(\varepsilon))} + w_\beta(u_0)^{-1-\frac1n} \|f\|_{C^0(E_{u_0}(\varepsilon))} .\label{eq:44}
   \end{equation}
 \end{prop}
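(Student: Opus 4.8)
The plan is to upgrade the 1-collapsed estimate \eqref{eq:37} to the sharper weight $w_\beta(u_0)^{-1-\frac1n}$ by exploiting the extra decay available in the circle direction. Fix $u_0$ away from the divisor and write $f = f_0 + \tilde f$ for the Fourier decomposition along the $S^1$-fibre of $L$, where $f_0$ is the zero mode (the fibrewise average) and $\tilde f = \sum_{k\neq 0} f_k e^{ik\theta}$ collects the nonzero modes. Since $\Delta_{g_\beta}$ commutes with the $S^1$-action (up to the exponentially small perturbation $g_\beta - g_{\beta,L}$, which I absorb throughout as in section~\ref{sec:local-scha-estim}), this decomposition is preserved and it suffices to estimate $f_0$ and $\tilde f$ separately.

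\textbf{Nonzero modes.} On $E_{u_0}(\varepsilon)$ the metric $g_{\beta,L}$ is, after rescaling by $-\beta\varphi_1'(u_0)$, uniformly comparable to a product in which the circle factor has length $\approx \beta / \sqrt{-\beta\varphi_1'(u_0)}$, i.e.\ of size $\lessapprox \sqrt\beta$. For each nonzero mode $f_k$, the operator $\Delta_{g_\beta}$ acting on $f_k e^{ik\theta}$ picks up a zeroth-order term $-k^2/(\beta^2 r^2)$ (in the notation of \eqref{eq:9}--\eqref{eq:10}, a term of size $\approx -1/(\beta^2\varphi_1''(u_0))$ times $f_k$), which is large and negative precisely because the circle collapses. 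Applying the maximum principle to $\tilde f$ against $\Delta_{g_\beta}$ with this large negative potential gives the pointwise bound $\|\tilde f\|_{C^0(E_{u_0}(\varepsilon))} \lessapprox \beta^2\varphi_1''(u_0)\,\|\Delta_{g_\beta}\tilde f\|_{C^0}$, and similarly for the weighted $C^0$ norm. Feeding this back into the already-established estimate \eqref{eq:37} applied to $\tilde f$, the gain $\beta^2\varphi_1''(u_0) \approx \beta\,w_\beta(u_0)^{1+\frac1n}/\,w_\beta(u_0)^{\frac1n} = \beta w_\beta(u_0)$ — more carefully, matching powers using $\varphi_1''(u)\approx e^u$ near $-\infty$ and $\varphi_1''(u)\approx(-u)^{-1+\frac1n}$ near $0$ — converts the coefficient $\frac1{\beta w_\beta(u_0)^{1/n}}$ into the desired $w_\beta(u_0)^{-1-\frac1n}$, at the cost of an extra derivative on $\Delta_{g_\beta}\tilde f$ that is controlled by $\|\Delta_{g_\beta}f\|_{C^\alpha_0}$.

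\textbf{Zero mode.} Here there is no gain from the potential, so I argue by a limiting (blow-up) procedure. The function $f_0$ depends only on $u$ (and on the $D$-directions, but those are collapsed and contribute lower-order terms), and on $E_{u_0}(\varepsilon)$, after rescaling by $-\beta\varphi_1'(u_0)$, the metric $\frac1\beta g_{\beta,L}$ looks like a long cylinder $([-T_\beta,T_\beta]\times S^1_{\sqrt\beta})\times D$ with $T_\beta\approx \varepsilon r_0(u_0)/\sqrt\beta \to \infty$; the zero mode equation reduces to a one-dimensional ODE $\Delta_{g_c} f_0 = $ (source), on this cylinder, uniformly in $\beta$. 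One establishes a Schauder-type estimate on such a long cylinder with the sharper weight by the standard argument: if it failed, one could extract a sequence of counterexamples, rescale, and pass to the limit to obtain a nonzero bounded harmonic (or $\Delta$-source-free) function on the infinite cylinder $\mathbb R\times S^1$ or on $\mathbb R_+$ in the degenerate cases $u_0\to 0,-\infty$, contradicting Liouville-type rigidity; the precise weight $w_\beta(u_0)^{-1-\frac1n}$ is exactly the scaling factor relating $g_\beta$ to $g_{TY}$ (resp.\ to the flat metric) and is what makes the limiting operator have constant coefficients. Translating back through the weighted norms \eqref{eq:32}--\eqref{eq:31} gives \eqref{eq:44} for $f_0$. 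Combining the two estimates and using Remark~\ref{rem:product} for the perturbation terms yields the proposition.

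\textbf{Main obstacle.} The delicate point is the zero mode: one must handle the three regimes $u_0$ bounded away from $\{0,-\infty\}$, $u_0\to 0$ (where the metric degenerates and $E_{u_0}$ must be cut against the compact Tian-Yau part), and $u_0\to -\infty$ (where $E_{-\infty}(\varepsilon)$ is a disk bundle and one must interface with Corollary~\ref{cor strong schauder}), and check that the limiting operator and the weight power $-1-\frac1n$ match in each case; the uniformity in $\beta$ of the cylinder Schauder constant, and controlling the $D$-direction (non-collapsed at the $h$-scale but collapsed at the $g_\beta$-scale) as a genuinely lower-order perturbation, are where the bookkeeping is heaviest.
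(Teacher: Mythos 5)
Your overall plan — Fourier decomposition along the circle, a maximum-principle gain on the nonzero modes from the collapsing circle, and a cylinder limit for the remaining part — matches the paper's strategy for the first step. However, the treatment of the circle-invariant part $f_0$ has a genuine gap.

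You claim that "$f_0$ depends only on $u$ (and on the $D$-directions, but those are collapsed and contribute lower-order terms), and \ldots reduces to a one-dimensional ODE". This is not correct: after projecting away the circle Fourier modes, $f_0$ lives on $\mathbb R_-\times D$ and the $D$-dependence is in no way lower order at the $g_\beta$ scale; it is precisely where the sharper weight must be earned. The blow-up of $\frac1\beta g_{\beta,L}$ on $E_{u_0}(\varepsilon)$ produces the cylinder $\mathbb R\times D$ (not $\mathbb R\times S^1$: the circle is gone after the Fourier split), and the operator to control in the limit is $\partial_v^2+\Delta_D$. Your Liouville appeal is then false as stated: the bounded harmonic functions on $\mathbb R\times D$ include all constants (and, once translated back, all functions of $u$ only survive). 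The argument only closes after a \emph{second} decomposition, projecting $f_0$ onto the space of functions with zero mean on each slice $\{u\}\times D$; that orthogonality to $\ker\Delta_D$ is what erases the critical weight $0$ of the cylindrical Laplacian, makes $\partial_v^2+\Delta_D$ an isomorphism $C^{2,\alpha}(\mathbb R\times D)\to C^\alpha(\mathbb R\times D)$, and yields the uniform $C^0$ bound that feeds into \eqref{eq:37}. Finally, the left-over piece — a genuine function of $u$ only — must be handled as a third case on its own, checking that $w_\beta(u)^{1+\frac1n}$ is exactly the right weight for the reduced one-dimensional operator. Without the extra $D$-slice projection and the verification that the cylindrical Laplacian is injective modulo it, your blow-up gives a nonzero limit in the kernel and the contradiction never arrives.

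Two smaller points: in the nonzero-mode estimate, the maximum principle against a comparison function only gives the stated bound on the smaller region $E_{u_0}(\frac12\varepsilon)$ and must also account for the boundary values of $f_\ell$ on $\partial E_{u_0}(\varepsilon)$, which contribute an extra $\|f_\ell\|_{C^0}$ term (handled in the paper by exhibiting a barrier $a\cosh(b(u-u_0))$ with decay rate $\gamma\ell/\beta$); and the coefficient you need is $\beta/\ell^2$ after summing over $\ell\neq0$, which requires the decay in $\ell$ of the barrier comparison, not just the pointwise gain at $\ell=1$.
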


 Remark that on the Tian-Yau part $X\setminus U_L$ the function $w_\beta$ takes the value $\beta$ so this is the same estimate as in (\ref{eq:37}). But the important point is that on $\{u\le -1\}$ the factor $w_\beta$ does not depend on $\beta$, contrarily to the initial estimate (\ref{eq:37}). We deduce the following corollary:

 \begin{cor}\label{coro:semilocal_estimate}
   One has the following uniform estimate, for all functions $f$ on $X$:
   \[ \|f\|_{C^{2,\alpha}_\delta} \lessapprox \|f\|_{C^0_\delta} + \|\Delta_{g_\beta}f\|_{C^\alpha_{\delta+1+\frac1n}}. \]
  \qed
 \end{cor}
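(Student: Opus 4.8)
The plan is to obtain Corollary~\ref{coro:semilocal_estimate} by patching the semi-local estimate of Proposition~\ref{prop:semi-local-schauder} over a covering of $X$ adapted to the weight $w_\beta$. First I would fix the small constant $\varepsilon>0$ and choose a family of base values $u_0$ --- a countable set tiling the $u$-range on the $U_L$-side (including the gluing region, where $g_\beta$ is a controlled perturbation of $g_{\beta,L}$ by Lemma~\ref{lem:est-diff-g}), together with $u_0=-\infty$ for the divisor $D$ and finitely many points on the compact Tian-Yau part $X\setminus U_L$ --- such that the regions $E_{u_0}(\tfrac12\varepsilon)$ cover $X$ while on each $E_{u_0}(\varepsilon)$ the weight $w_\beta$ is comparable to the constant $W_{u_0}:=w_\beta(u_0)$ (with the convention $W_{-\infty}=1$), since $\varphi_1'$ and $\varphi_1''$ are almost constant there as noted just before the Proposition. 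Because $\varepsilon$ does not depend on $\beta$, all of these comparisons and the combinatorics of the covering are uniform in $\beta$.

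Next, on each region I would apply Proposition~\ref{prop:semi-local-schauder} and multiply the resulting inequality by $W_{u_0}^{\delta+1+\frac1n}$. Using $w_\beta\approx W_{u_0}$ on $E_{u_0}(\varepsilon)$ to commute this power past the weighted sup- and Hölder-norms --- the variation of $w_\beta^{a}$ over the tiny balls entering \eqref{eq:31} being negligible --- the left-hand side becomes the restriction to $E_{u_0}(\tfrac12\varepsilon)$ of the order-$2$ contributions to $\|f\|_{C^{2,\alpha}_\delta}$, namely the $w_\beta^{\delta+1+\frac1n}|\nabla^2f|$ term and the associated weighted seminorm; the term $W_{u_0}^{\delta+1+\frac1n}\|\Delta_{g_\beta}f\|_{C^\alpha_0(E_{u_0}(\varepsilon))}$ becomes $\lessapprox\|\Delta_{g_\beta}f\|_{C^\alpha_{\delta+1+\frac1n}}$; and the remainder becomes $W_{u_0}^{\delta+1+\frac1n}\,W_{u_0}^{-1-\frac1n}\|f\|_{C^0(E_{u_0}(\varepsilon))}=W_{u_0}^{\delta}\|f\|_{C^0(E_{u_0}(\varepsilon))}\lessapprox\|f\|_{C^0_\delta}$, the exponents being arranged precisely so that this last step loses nothing. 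Taking the supremum over $u_0$ is legitimate since only $C^0$- and Hölder-type suprema are involved (so no $\beta$-uniform bound on the covering multiplicity is needed); one only needs that every point of $X$ lies in some $E_{u_0}(\tfrac12\varepsilon)$ and that every pair $(x,y)$ with $d_{g_\beta}(x,y)<\rho(\beta w_\beta^{\frac1n}(x))^{\frac12}$ lies in a single $E_{u_0}(\varepsilon)$ --- which holds because $\beta w_\beta^{\frac1n}\ll w_\beta^{1+\frac1n}$ (using $w_\beta\ge\beta$) makes such a ball tiny compared to the $g_\beta$-diameter of $E_{u_0}(\varepsilon)$, after fixing finitely many charts on $D$ to handle its directions.

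It remains to control the order-$0$ and order-$1$ terms of $\|f\|_{C^{2,\alpha}_\delta}$. The order-$0$ term $w_\beta^\delta|f|$ is literally $\|f\|_{C^0_\delta}$, hence already present on the right-hand side. For the order-$1$ term $w_\beta^{\delta+\frac12(1+\frac1n)}|\nabla f|$ I would argue region by region: away from $D$, after rescaling by $(\beta w_\beta(u_0)^{\frac1n})^{-1}$ the region $E_{u_0}(\varepsilon)$ has bounded geometry on balls of fixed size, so the classical interpolation (Ehrling) inequality $\|\nabla f\|_{C^0}\lessapprox\eta\|\nabla^2f\|_{C^0}+\eta^{-1}\|f\|_{C^0}$ applies at that scale, and the same weight bookkeeping converts it into $\lessapprox\eta\cdot(\text{order-}2\text{ term})+\eta^{-1}\|f\|_{C^0_\delta}$; near $D$, where the circle fibre collapses and no such interpolation ball exists, one instead uses the first-order version of Proposition~\ref{prop:semi-local-schauder}, proved by exactly the same scheme (Fourier decomposition along the circle, maximum principle on the nonzero modes, long asymptotically-infinite-cylinder estimate on the zero mode), giving $w_\beta^{\delta+\frac12(1+\frac1n)}|\nabla f|\lessapprox\|f\|_{C^0_\delta}+\|\Delta_{g_\beta}f\|_{C^\alpha_{\delta+1+\frac1n}}$ directly. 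Feeding these into the order-$2$ bound and choosing $\eta$ small enough to absorb $\eta\,\|f\|_{C^{2,\alpha}_\delta}$ into the left-hand side yields the corollary.

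The genuine analytic content is Proposition~\ref{prop:semi-local-schauder} (and, near the divisor, Corollary~\ref{cor strong schauder}), both already established; the present corollary is a patching statement, so I expect the only real things to be careful about are the three uniform-in-$\beta$ features of the covering used above --- it covers $X$, $w_\beta$ is comparable to a constant on each member, and each relevant Hölder pair sits in a single member --- together with the elementary observation that $w_\beta\ge\beta$, which is exactly what makes the factor $W_{u_0}^{-1-\frac1n}$ in the Proposition cancel against $W_{u_0}^{\delta+1+\frac1n}$ and produces the clean right-hand side $\|f\|_{C^0_\delta}+\|\Delta_{g_\beta}f\|_{C^\alpha_{\delta+1+\frac1n}}$.
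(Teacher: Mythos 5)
Your overall strategy --- cover $X$ by the regions $E_{u_0}(\tfrac12\varepsilon)$, apply Proposition~\ref{prop:semi-local-schauder} on each, and note that $w_\beta$ is comparable to a constant on each piece so the local weighted estimate globalises --- is exactly the (unwritten) argument the paper has in mind, and the bookkeeping that turns $w_\beta(u_0)^{-1-\frac1n}\|f\|_{C^0}$ into $\|f\|_{C^0_\delta}$ after multiplying by $W_{u_0}^{\delta+1+\frac1n}$ is correct. The $j=0$ contribution is also handled correctly.

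The one place where the argument as written has a gap is the Ehrling interpolation for the order-$1$ term, ``away from $D$.'' If you interpolate at the $1$-collapsed scale, i.e.\ using $h:=(\beta w_\beta(u_0)^{1/n})^{-1}g_\beta$ with a fixed $\eta$, the bookkeeping does \emph{not} produce $\eta^{-1}\|f\|_{C^0_\delta}$. Indeed $|\nabla_h f|=(\beta w_\beta^{1/n})^{1/2}|\nabla_{g_\beta}f|$, so multiplying the Ehrling inequality by $w_\beta^{\delta+\frac12(1+\frac1n)}$ gives for the zeroth-order piece the weight
\[
w_\beta^{\delta+\frac12(1+\frac1n)}\,(\beta w_\beta^{1/n})^{-1/2} \;=\; \Big(\tfrac{w_\beta}{\beta}\Big)^{1/2}\,w_\beta^\delta,
\]
so the bound is $\eta^{-1}(w_\beta/\beta)^{1/2}\|f\|_{C^0_\delta}$, and the extra factor $(w_\beta/\beta)^{1/2}$ is as large as $\beta^{-1/2}$ in the intermediate region $\beta\lesssim w_\beta\lesssim1$. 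The factor only vanishes if you let $\eta$ itself grow like $(w_\beta/\beta)^{1/2}$ in $h$-units, i.e.\ you must interpolate at the $2$-collapsed $g_\beta$-scale $w_\beta^{\frac12(1+\frac1n)}$ rather than the $1$-collapsed scale. The region $E_{u_0}(\varepsilon)$ does have $g_\beta$-diameter of that order away from $D$, so this is possible there, but it is not ``the same bookkeeping''; you need to vary the interpolation length with $u_0$. Near $D$ the geodesics of that length can hit the divisor, which is exactly the issue you flag; your fallback --- a first-order companion to Proposition~\ref{prop:semi-local-schauder}, proved by the same Fourier/cylinder scheme --- is a valid way to close this, and in fact it handles the whole range of $u_0$, so it is cleaner to use it uniformly rather than switching between two mechanisms. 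With that correction, the proposal is a complete proof and agrees with the paper's intended argument.
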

 
 \begin{rem}
 \label{gen operator}
 The estimate $ \|f\|_{C^{2,\alpha}_\delta} \lessapprox \|f\|_{C^0_\delta} + \|L_{\beta}f\|_{C^\alpha_{\delta+1+\frac1n}}$ follows for any operator of the shape say $L_\beta =\Delta_{g_\beta}+c$ where $c$ is a constant, as one sees immediately using the interpolation estimate $\|f\|_{C^\alpha}\lessapprox \ep^{-\alpha} \|f\|_{C^0}+\ep^\alpha \|f\|_{C^1}$ for any fixed $0<\ep\ll 1$.   
 \end{rem}

 The rest of this section is devoted to the proof of the estimate (\ref{eq:44}). Again from the bounds on $g_\beta-g_{\beta,L}$ it is sufficient to prove the estimate on $g_{\beta,L}$.

 \subsubsection*{First step} We first decompose along each circle into Fourier series $f=\sum_{\Bbb{Z}} f_\ell$ and control nonconstant modes. Here, more precisely, we see $L$ as a $S^1$ bundle over $\Bbb{R}_-\times D$, and $f_\ell$ is induced from a section $F_\ell$ of $p^*L^{-\ell}$ over $\Bbb{R}_-\times D$ by the formula $f_\ell(x)=\langle F_\ell(p(x)),x^{\otimes \ell}\rangle$. On $p^*L^{-\ell}$ over $D$ we have the rough Laplacian $-\Delta_D := \nabla^*\nabla$ constructed from the given connection on $L$ and the metric $g_D$.

 The Laplacian of the metric $g_{\beta,L}$ preserves the Fourier decomposition and acts on $F_\ell$ by
\begin{align}
  \label{eq:38}
  \Delta_{g_{\beta,L}} F_\ell &= \frac 2{\varphi_1''(u)} \big( \partial_u^2F_\ell - \frac{\ell^2}{4 \beta^2}F_\ell \big)
  + \frac{2(n-1)}{\varphi_1'(u)} \partial_u F_\ell +\frac1{-\beta\varphi_1'(u)} \Delta_D F_\ell  \\
  &= \Delta_{\Bbb{R}_-\times D}F_\ell - \frac{\ell^2}{2 \varphi_1''(u)\beta^2}F_\ell. \nonumber \label{eq:59}
\end{align}

\begin{lem}
  Fix $\varepsilon>0$ small enough and $\ell\neq0$. We have the estimate
  \begin{equation}
    \label{eq:39}
    \sup_{E_{u_0}(\frac12 \varepsilon)} |f_\ell| \lessapprox  \frac \beta{\ell^2} \sup_{E_{u_0}(\varepsilon)} \big( w_\beta(u_0)^{\frac1n} |\Delta f_\ell| + w_\beta(u_0)^{-1} |f_\ell| \big).
  \end{equation}
  It follows that
  \begin{equation}
    \label{eq:40}
    \sup_{E_{u_0}(\frac12 \varepsilon)} |f-f_0| \lessapprox \beta \sup_{E_{u_0}(\varepsilon)} \big( w_\beta(u_0)^{\frac1n} |\Delta(f-f_0)| + w_\beta(u_0)^{-1} |f-f_0| \big).
  \end{equation}
\end{lem}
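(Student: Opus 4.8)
This concerns the nonzero Fourier modes, and the mechanism is the key feature of \eqref{eq:59}: on the $\ell$-th mode with $\ell\neq0$, the operator $\Delta_{g_{\beta,L}}$ acquires the large \emph{negative} zeroth order term $-m(u)F_\ell$, where $m(u):=\tfrac{\ell^2}{2\varphi_1''(u)\beta^2}>0$, so that the reduced equation $(\Delta_{\mathbb{R}_-\times D}-m)F_\ell=G_\ell$ — with $F_\ell$, resp.\ $G_\ell$, the $\ell$-th mode of $f$, resp.\ of $\Delta f$ — is strongly coercive, and the lemma becomes a maximum principle estimate. First I would record, via Kato's inequality, that $v:=|F_\ell|$ is a weak subsolution on $E_{u_0}(\varepsilon)$: $(\Delta_{\mathbb{R}_-\times D}-m)v\ge-|G_\ell|$; since the operator $\Delta_{\mathbb{R}_-\times D}-m$ has a strictly negative zeroth order term, the comparison principle applies to it.

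Next I would collect the elementary geometric facts feeding the barrier, all of which follow from the asymptotics \eqref{eq:11}--\eqref{eq:11bis} of $\varphi_1$ together with the definition of $E_{u_0}$. On $E_{u_0}(\varepsilon)$ the quantities $\varphi_1'(u)$ and $\varphi_1''(u)$ remain comparable to their values at $u_0$ (the only subtlety is $u_0=-\infty$, where $\varphi_1''\to0$ but $\varphi_1'$ stays bounded), hence $m_\star:=\inf_{E_{u_0}(\varepsilon)}m\approx\tfrac{\ell^2}{\varphi_1''(u_0)\beta^2}$ (with $\varphi_1''(u_0)$ replaced by $\varepsilon^2$ when $u_0=-\infty$). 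Throughout the range of $u_0$ relevant here, i.e.\ $|u_0|\gtrsim\beta^\mu$ (this is where $g_{\beta,L}$, hence the statement, is actually used), one then has (i) $m_\star^{-1}\lessapprox\tfrac{\beta}{\ell^2}w_\beta(u_0)^{1/n}$, since \eqref{eq:11}--\eqref{eq:11bis} give $\beta\varphi_1''(u_0)\lessapprox w_\beta(u_0)^{1/n}$; and (ii) the decay length $m_\star^{-1/2}$ of the model operator, measured in $g_{\beta,L}$-distance along $u$, is much shorter than the $g_{\beta,L}$-width $\approx\varepsilon r_0(u_0)$ of the collar $E_{u_0}(\varepsilon)\setminus E_{u_0}(\tfrac12\varepsilon)$; concretely $\sqrt{m_\star}\,\varepsilon r_0(u_0)\gtrsim\varepsilon\ell\beta^{\mu-1}\to+\infty$, using $r_0(u_0)\approx\sqrt{\varphi_1''(u_0)}\,|u_0|$ near $u=0$.

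With these in hand I would compare $v$ on $E_{u_0}(\varepsilon)$ with the barrier $\Phi=C\,m_\star^{-1}\sup_{E_{u_0}(\varepsilon)}|G_\ell|+\Psi\cdot\sup_{E_{u_0}(\varepsilon)}|F_\ell|$. The constant first summand handles the source term: $(\Delta_{\mathbb{R}_-\times D}-m)\big(Cm_\star^{-1}\sup|G_\ell|\big)=-m\,Cm_\star^{-1}\sup|G_\ell|\le-\sup|G_\ell|$ for $C\ge1$, and by (i) it is $\lessapprox\tfrac{\beta}{\ell^2}w_\beta(u_0)^{1/n}\sup|G_\ell|$. For $\Psi$ I would take, away from the divisor, the hyperbolic-cosine barrier $\Psi=\cosh(\lambda d)/\cosh(\lambda\,\varepsilon r_0(u_0))$, where $d$ is the $g_{\beta,L}$-distance in $u$ to the mid-level $r_0=r_0(u_0)$ and $\lambda=\tfrac12\sqrt{m_\star}$; the factor $\tfrac12$ yields $m-\lambda^2\ge\tfrac34 m_\star$, which more than absorbs the first order term $\tfrac{2(n-1)}{\varphi_1'}\partial_u$ of $\Delta_{\mathbb{R}_-\times D}$ (one checks $m_\star\gtrsim\tfrac{\ell}{\beta}|u_0|^{-1/n}$, again using $|u_0|\gtrsim\beta^\mu$). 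Near the divisor $u_0=-\infty$ one replaces this by the explicit homogeneous supersolution $\Psi=(r_D/\varepsilon)^{\ell/\beta}$ of the flat cone $dr_D^2+\beta^2r_D^2d\theta^2$ times $\mathbb{C}^{n-1}$ appearing in \eqref{eq:13}. In both cases $\Psi\ge1$ on $\partial E_{u_0}(\varepsilon)$ and $\Psi\le 2e^{-\lambda\varepsilon r_0(u_0)}$, resp.\ $\Psi\le2^{-\ell/\beta}$, on $E_{u_0}(\tfrac12\varepsilon)$, which by (ii) is $\lessapprox\tfrac{\beta}{\ell^2}w_\beta(u_0)^{-1}$ for $\beta$ small, uniformly in $\ell\ge1$ (an exponential in $\ell\beta^{\mu-1}$ beats $\ell^2$). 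The comparison principle then gives $v\le\Phi$ on $E_{u_0}(\varepsilon)$, which is precisely \eqref{eq:39}. Finally \eqref{eq:40} is immediate by summation: $|f-f_0|\le\sum_{\ell\neq0}|f_\ell|$, $\sup_{E_{u_0}(\varepsilon)}|f_\ell|\le\sup_{E_{u_0}(\varepsilon)}|f-f_0|$ and similarly for $g_\ell$ and $\Delta(f-f_0)$, and $\sum_{\ell\neq0}\ell^{-2}<\infty$.

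The step I expect to be the main obstacle is the barrier near the divisor $u_0=-\infty$: there $m(u)$ is not a single scale but blows up like $r_D^{-2}$ and the model operator degenerates to the conical Laplacian, so one must pass from the $\cosh$ barrier to the homogeneous solution $r_D^{\ell/\beta}$ and then verify — using the uniform cone estimates of Section~\ref{sec:unif-scha-estim}, or a direct maximum-principle computation — that the error terms $g_\beta-g_{\beta,L}$ (controlled by \eqref{eq:23}) and the transverse $D$-directions spoil neither the supersolution property nor the uniformity of the constants in $\beta$ and $\ell$; the conical point itself is handled as a removable singularity for bounded sub/supersolutions.
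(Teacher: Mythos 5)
Your proposal is correct and takes essentially the same approach as the paper: both exploit the $\ell^2/\beta^2$ coercivity of the $\ell$-th Fourier mode and apply the maximum principle with a $\cosh$-type barrier away from $D$ (with rate $\sim \ell/\beta$ in the $u$-coordinate) and an exponential/power barrier near $D$ ($e^{\ell u/\beta}$ in the paper, equivalently your $r_D^{\ell/\beta}$), handling the source term by a constant barrier and then summing over $\ell$. The only cosmetic differences are that the paper runs the argument on $|F_\ell|^2$ via the Bochner-type inequality \eqref{eq:60} and splits $f_\ell$ into a boundary-vanishing part plus a harmonic part, whereas you apply Kato's inequality to $|F_\ell|$ and use a single combined barrier; also, the relevant range is $|u_0|\gtrsim\beta$ (from $u<-\beta/A$), not $|u_0|\gtrsim\beta^\mu$, though this only makes your hypothesis unnecessarily strong, not wrong.
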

\begin{proof}

Let us first explain how to derive \eqref{eq:40} from \eqref{eq:39}. Set $g:=f-f_0$. The function $F_\ell$ can be recovered as an integral of $ge^{-i\ell \cdot}$ along each circle. Hence for $\ell \neq 0$, we have $\sup |f_\ell| \le \sup |g|$ and similarly 
 $\sup |\Delta f_\ell| \le \sup |\Delta g|$.  Now we can just sum all the estimates \eqref{eq:39} for $\ell \in \mathbb Z^*$ and  get \eqref{eq:40}.
  
  We now get to proving \eqref{eq:39}. We use the inequality
 \begin{equation}
 \frac12 \Delta_{\Bbb{R}_-\times D}|F_\ell|^2 \geq \langle \Delta_{\Bbb{R}_-\times D}F_\ell,F_\ell\rangle
= \langle\Delta  F_\ell, F_\ell\rangle + \frac{\ell^2}{2 \varphi_1''(u)\beta^2} |F_\ell|^2\label{eq:60}
\end{equation}
 twice.

First, if $f_\ell$ vanishes at $\partial E_{u_0}(\varepsilon)$, then from the maximum principle and the fact that $\varphi_1''(u) \lessapprox \varphi_1''(u_0)$ on $E_{u_0}(\varepsilon)$ we have
\[ \sup_{E_{u_0}(\varepsilon)} |F_\ell| \lessapprox \frac{\beta^2\varphi_1''(u_0)}{\ell^2} \sup_{E_{u_0}(\varepsilon)} |\Delta F_\ell|
\]
and the result follows since $\frac \beta{|u_0|} \leq A$ and $\varphi_1''(u)\approx u^{-1+\frac1n}$ when $u\rightarrow0$. 

Therefore it is sufficient to consider the case where $\Delta f_\ell=0$.  Indeed, let $h_\ell$ be the harmonic function on  $E_{u_0}(\varepsilon)$ with same boundary values as $f_\ell$ and let $g_\ell:=f_\ell-h_\ell$. By what was said above, we have \[ \sup_{E_{u_0}(\varepsilon)} |g_\ell| \lessapprox  \frac \beta{\ell^2} \sup_{E_{u_0}(\varepsilon)} w_\beta(u_0)^{\frac1n} |\Delta f_\ell|\]
hence 
\[ \sup_{E_{u_0}(\frac12 \varepsilon)} |f_\ell| \lessapprox  \frac \beta{\ell^2} \sup_{E_{u_0}(\varepsilon)} w_\beta(u_0)^{\frac1n} |\Delta f_\ell| + \sup_{E_{u_0}(\frac12 \varepsilon)} |h_\ell|. 
\]
Recall that $\sup_{E_{u_0}(\varepsilon)}  |h_\ell|=\sup_{E_{u_0}(\varepsilon)}  |f_\ell|$. So, if we assume that $\sup_{E_{u_0}(\frac12 \varepsilon)} |h_\ell| \lessapprox \frac \beta{\ell^2}w_\beta(u_0)^{-1} \sup_{E_{u_0}(\varepsilon)}  |h_\ell|$, we are done.

Let us consider the comparison function $g(u)=a\cosh(b(u-u_0))$. Then
\begin{align*}
 \frac12 \Delta g = \frac{\partial_u^2g}{\varphi_1''(u)} + (n-1)\frac{\partial_u g}{\varphi_1'(u)}
  &\leq \Big( \frac{b^2}{\varphi_1''(u)} - \frac{(n-1)b}{\varphi_1'(u)} \Big) g \\
    &\leq \frac{\ell^2}{2\beta^2\varphi_1''(u)} g
\end{align*}
if we take $b=\gamma \frac \ell \beta$ for some small constant $\gamma>0$,   thanks to the condition $u\le -\frac \beta A$ for $A$ small enough. Combining with (\ref{eq:60}) we obtain
\begin{equation}
 \Delta(|F_\ell|^2-g) \geq \frac{\ell^2}{\varphi_1''(u)\beta^2}(|F_\ell|^2-g).\label{eq:62}
\end{equation}
Choose $a$ large enough so that $|F_\ell|^2 \leq g$ at $\partial E_{u_0}(\varepsilon)$, 
then it follows from (\ref{eq:62}) that $|F_\ell|^2 \leq g$ on $E_{u_0}(\varepsilon)$, which in particular on $E_{u_0}(\frac \varepsilon2)$ we obtain
\[ \sup_{E_{u_0}(\frac \varepsilon2)} |F_\ell|^2
  \lessapprox \frac{\cosh(\frac12 \varepsilon\gamma \frac \ell \beta u_0)}{\cosh(\varepsilon\gamma \frac \ell \beta u_0)}
  \sup_{E_{u_0}(\varepsilon)} |F_\ell|^2. \]
Since $x:=\frac{|u_0|}\beta \geq \frac1A$, we certainly have
\[ \frac{\cosh(\frac12 \varepsilon\gamma \frac \ell \beta u_0)}{\cosh(\varepsilon\gamma \frac \ell \beta u_0)}
  \lessapprox \frac 1{(x\ell)^4} \leq \frac {A^2}{(x\ell^2)^2} \]
which gives (\ref{eq:39}).

The case where $u_0=-\infty$ (that is centered on the divisor) is similar,  so we only highlight the modifications to perform. To obtain the inequality  
\begin{equation}
\label{lap control}
\sup_{E_{-\infty}(\varepsilon)} |F_\ell| \lessapprox \frac{\beta^2}{\ell^2} \sup_{E_{-\infty}(\varepsilon)} |\Delta F_\ell|
\end{equation}
we can modify $E_{-\infty}(\ep)$ and assume that its boundary is $\{u=2\log \ep\}$. Let us set $u_\ep:=u-2\log \ep$ so that $\Delta u_\ep=\frac{2(n-1)}{\vp_1'(u)}=O(1)$ and fix $\delta>0$.  Then, we apply the maximum principle to $|F_\ell|^2+\delta u_\ep$ to obtain, for any fixed $x$:
\[|F_\ell(x)|^2\le \frac{\beta^2}{\ell^2} \sup_{E_{-\infty}(\varepsilon)} |\Delta F_\ell| \cdot \sup_{E_{-\infty}(\varepsilon)} |F_\ell|-\delta u_\ep(x)+O(\delta)\] and we get \eqref{lap control} by taking $\delta\to 0$ and passing to the supremum over $x\in E_{-\infty}(\varepsilon)$. The next step is very similar to the case $u_0\neq -\infty$ but one chooses instead $g(u)=a e^{\frac{\ell u}{\beta}}$, satisfying $\Delta g\le\frac{2\ell^2}{\beta^2\vp_1''(u)}g$. Then we apply the maximum principle using the same barrier function as before, to obtain  $\sup_{E_{-\infty}(\frac \varepsilon2)} |F_\ell|
  \le  e^{-\frac{\ell}{\beta} \log 2} \sup_{E_{-\infty}(\varepsilon)} |F_\ell|$. The conclusion follows from the inequality $e^{-\frac{\ell}{\beta} \log 2} \lessapprox \frac{\beta^2}{\ell^2}.$ 
  \end{proof}
Combining (\ref{eq:40}) with the local Schauder estimate (\ref{eq:37}) for $g=f-f_0$, we obtain
\begin{equation}\label{eq:42}
     \|\nabla^2g\|_{C^\alpha_0(E_{u_0}(\frac12 \varepsilon))} 
     \lessapprox  \|\Delta_{g_{\beta,L}} g\|_{C^\alpha_0(E_{u_0}(\varepsilon))} + w_\beta(u_0)^{-1-\frac1n}\|g\|_{C^0(E_{u_0}(\varepsilon))} ,
\end{equation}
which is the estimate stated in Proposition~\ref{prop:semi-local-schauder}.

\subsubsection*{Second step} We can now restrict the proof of Proposition~\ref{prop:semi-local-schauder} to the case when $f$ is circle invariant. In that case (\ref{eq:38}) reduces to
\begin{align*}
  \Delta_{g_{\beta,L}} f &= 2\frac{\partial_u^2f_\ell}{\varphi_1''(u)} + 2(n-1)\frac{\partial_u f}{\varphi_1'(u)} +\frac1{-\beta\varphi_1'(u)} \Delta_D f  \\
  &= \frac1{-\beta\varphi_1'(u)} \big( \partial_v^2f + \Delta_Df \big),
\end{align*}
where $dv^2=\frac{\varphi_1''(u)}{-2\beta\varphi_1'(u)} du^2$ and we choose $v(u_0)=0$. We now restrict to the case where for every $u$ one has
\begin{equation}
  \label{eq:46}
  \int_{\{u\}\times D} f =  0.
\end{equation}
We also suppose that we are not close to the divisor $D$, that is $r_D(u_0)>\varepsilon$.
On each slice $\{u\}\times D$ the function $f$ is therefore orthogonal to the kernel of $\Delta_D$, which corresponds to erasing the critical weight $0$ of the cylindrical Laplacien $\partial_v^2 + \Delta_D$. It follows that this Laplacian is an isomorphism $C^{2,\alpha}(\Bbb{R}\times D)\rightarrow C^\alpha(\Bbb{R}\times D)$, see for example \cite[formula (2.3)]{LocMcO85}. In particular we have, still under condition (\ref{eq:46}),
\begin{equation}
  \label{eq:47}
  \sup_{\Bbb{R}\times D} |f| \lessapprox  \sup_{\Bbb{R}\times D}  | (\partial_v^2 + \Delta_D)f |.
\end{equation}
A quick elementary derivation of (\ref{eq:47}) is as follows: suppose $(\partial_v^2+\Delta_D)f=g$, note $E(a)=\int_{[-a,a]\times D} |df|^2$ and $F(a)=\int_{[-a,a]\times D} |g|^2$. The hypothesis on $f$ implies that for each $v$ we have $\int_{\{v\}\times D} |d_Df|^2 \geq \lambda_1^2 \int_{\{v\}\times D} |f|^2$, where $\lambda_1^2$ is the first nonzero eigenvalue of $\Delta_D$. By elliptic regularity and translation invariance, it is enough to prove that $E(1)\lessapprox \sup |g|^2$. By integration by parts we have
  \begin{equation}
 E(a)=-\int_{[-a,a]\times D} fg + \big(\int_{v=a}-\int_{v=-a}\big) f \partial_vf . \label{eq:61}
\end{equation}
But $|\int_{v=a} f \partial_vf | \leq \frac12 \int_{v=a} \frac1{\lambda_1} |\partial_vf|^2 + \lambda_1 |f|^2 \leq \frac1{2\lambda_1} \int_{v=a} |df|^2$. Since $E'(a)=(\int_{v=a}+\int_{v=-a})|df|^2$, it follows from (\ref{eq:61}) that $\lambda_1 E(a) - E'(a) \leq \frac{F(a)}{\lambda_1}$, and after integration $E(1) \leq \int_1^{+\infty} \frac{e^{-\lambda_1(a-1)}}{\lambda_1} F(a) da \leq (\sup |g|^2) \frac2{\lambda_1}\int_1^{+\infty} a e^{-\lambda_1 (a-1)} da$.

The region $E_{u_0}(\varepsilon)$ gives only a bounded region in $\Bbb{R}\times D$, of diameter $2d$ with respect to the variable $v$. Using a cut-off function for $\frac d2 \leq |v| \leq d$, we deduce from (\ref{eq:47}) combined with the interpolation inequality $\|\partial_v f\|_{\infty}^2 \lessapprox \|\partial^2_v f\|_{\infty} \cdot \| f\|_{\infty}$ the following estimate
\begin{equation}
  \label{eq:48}
  \sup_{[-\frac d2,\frac d2]\times D} |f| \lessapprox \sup_{[-d,d]\times D} \Big[| (\partial_v^2 + \Delta_D)f | + \frac1{d^2}|f|\Big].
\end{equation}
Now we come back to the variable $u$: we have $d^2 \approx \frac{\varphi_1''(u_0)}{-\beta\varphi_1'(u_0)}w_\beta(u_0)^2$, so we obtain (remember $-\varphi_1'(u_0)\approx w_\beta(u_0)^{\frac1n}$):
\begin{equation}
  \label{eq:49}
  \sup_{E_{u_0}(\frac \varepsilon2)} |f| \lessapprox \beta w_\beta(u_0)^{\frac1n} \sup_{E_{u_0}(\varepsilon)} |\Delta_{g_{\beta,L}}f| + \frac1{\varphi_1''(u_0)w_\beta(u_0)^2} |f| .
\end{equation}
Combining with (\ref{eq:37}) we finally get
\begin{equation}
  \label{eq:50}
  \|\nabla^2f\|_{C^\alpha_0(E_{u_0}(\frac12 \varepsilon))} \lessapprox \|\Delta_{g_\beta} f\|_{C^\alpha_0(E_{u_0}(\varepsilon))} + \frac1{\varphi_1''(u_0)w_\beta(u_0)^2} \|f\|_{C^0(E_{u_0}(\varepsilon))}.  
\end{equation}
Since $\varphi_1''(u) \approx w_\beta(u)^{-1+\frac1n}$ if $r_D(u)>\varepsilon$, the estimate (\ref{eq:44}) follows in that case.

For the case where $u_0=-\infty$, that is $E_{u_0}(\varepsilon)$ is centered on the divisor $D$, we proceed similarly, except that the limit after rescaling is a half-cylinder instead of a cylinder. The same estimate (\ref{eq:44}) follows.

\subsubsection*{Third step} There remains only the case of a function $f(u)$ of the variable $u$ alone. But it is then almost obvious that the weight $w_\beta(u)^{1+\frac1n}$, which equals $|u|^{1+\frac1n}$ for $u$ small enough, is the correct weight for the operator $\Delta_{g_{\beta,L}}$.

\section{Convergence in the positive case: proof of Theorem~\ref{thmB}}
\label{sec:conv-posit-case}

\subsection{Bound on the inverse of the linearisation}
\label{sec:bound-inverse-line}

We first give a bound on the inverse of the linearisation $L_\beta =\frac12 \Delta_{g_\beta}+1$ of the operator $P_\beta $ defined in (\ref{eq:21}). We first prove:
\begin{prop}\label{prop:bound_L-1}
  Fix $\delta\in(0,1)$. There exists a constant $c$ such that for any function $f$ on $X$ such that $\int_X f d\vol_{g_\beta}=0$ one has
  \begin{equation}
   \|f\|_{C^{2,\alpha}_\delta} \leq c \| L_\beta f \|_{C^\alpha_{\delta+1+\frac1n}}.\label{eq:45}
 \end{equation}
\end{prop}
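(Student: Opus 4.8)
The plan is to deduce \eqref{eq:45} by combining the uniform Schauder estimate of Corollary~\ref{coro:semilocal_estimate} (in the form of Remark~\ref{gen operator}, applied to $L_\beta=\tfrac12\Delta_{g_\beta}+1$), which gives
\[ \|f\|_{C^{2,\alpha}_\delta} \lessapprox \|f\|_{C^0_\delta} + \|L_\beta f\|_{C^\alpha_{\delta+1+\frac1n}}, \]
with a uniform \emph{a priori $C^0$ bound} $\|f\|_{C^0_\delta}\lessapprox\|L_\beta f\|_{C^\alpha_{\delta+1+\frac1n}}$ valid for all $f$ with $\int_X f\,d\vol_{g_\beta}=0$; the two inequalities together yield the proposition. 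For $\beta$ in a fixed interval $[\beta_0,\tfrac14]$ the conical Kähler--Einstein metric is nondegenerate, so $L_\beta$ is an isomorphism between the relevant Hölder spaces, and it depends continuously on $\beta$; hence the bound holds uniformly there. The only issue is the collapsing regime $\beta\to0$.

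For small $\beta$ I would prove the $C^0$ bound by contradiction and a blow-up argument. Assume there are $\beta_k\to0$ and functions $f_k$ with $\int_X f_k\,d\vol_{g_{\beta_k}}=0$, $\|f_k\|_{C^0_\delta}=1$, but $\varepsilon_k:=\|L_{\beta_k}f_k\|_{C^\alpha_{\delta+1+\frac1n}}\to0$. By Corollary~\ref{coro:semilocal_estimate}/Remark~\ref{gen operator} the $f_k$ are bounded in $C^{2,\alpha}_\delta$. Pick points $x_k$ nearly realizing $\sup_X w_{\beta_k}^\delta|f_k|=1$, and classify them by the limit $s_\infty\in[0,\tfrac\pi2]$ of the interval coordinate $s(x_k)$ from Section~\ref{sec:geometry}. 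Set $\hat f_k:=w_{\beta_k}(x_k)^\delta f_k$, so $|\hat f_k|\lessapprox1$ near $x_k$ and $|\hat f_k(x_k)|\to1$. Rescaling the collapsed metric $g_{\beta_k}$ exactly as in Section~\ref{sec:geometry}, Cases~A and~B, the point $x_k$ sits in a region that converges in the pointed Gromov--Hausdorff sense to one of the noncollapsed models found there: a flat product $\mathbb R\times D$ if $s_\infty\in(0,\tfrac\pi2)$ (the ``neck''), a half-space model $\mathbb R_+$, $\mathbb R_+\times D$ or $\mathbb R_+\times\mathbb C^{n-1}$ near the divisor ($s_\infty=0$), or the Tian--Yau manifold $(X\setminus D,g_{TY})$ and its rescaled models near $s=\tfrac\pi2$. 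In every case the scalings $g_\beta\approx\tfrac1\beta\cdot(\text{bounded-curvature metric})$ near $D$ and $g_\beta\approx\beta^{1+\frac1n}g_{TY}$ on the Tian--Yau part turn the equation $\tfrac12\Delta_{g_{\beta_k}}f_k+f_k=h_k$ into $\Delta_\infty\hat f_\infty=0$, since after the matching rescaling both the zeroth-order term and the right-hand side acquire an extra vanishing factor (using $\varepsilon_k\to0$ and boundedness of $\hat f_k$). All these limit spaces are parabolic (the Tian--Yau end has volume growth of order $\tfrac{2n}{n+1}<2$) or Euclidean, so a Liouville theorem forces $\hat f_\infty$ to be a constant $c$ with $|c|=1$.

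It remains to rule out $c\neq0$, and this globalization is where the mean-zero hypothesis enters and where most of the work lies. Covering the portion of $X$ over any compact subinterval of $[0,\tfrac\pi2]$ by overlapping regions of the above types and using the uniform $C^{2,\alpha}_\delta$ bound, one shows that the constant limits at different base points and scales agree, so that $w_{\beta_k}^\delta f_k$ converges uniformly, over such subintervals, to a single constant $c$; matching across the gluing zone $t\sim t_\beta$ and across the two ends is what pins down this common value. On the Tian--Yau end $w_{\beta_k}\to0$, so a nonzero limiting constant for $\hat f_k$ there would force $f_k$ to blow up where it is glued to the bounded neck, which is impossible; hence the maximizer $x_k$ must stay where $w_{\beta_k}(x_k)$ is bounded below, where $f_k$ and $\hat f_k$ differ by a bounded factor. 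Then $f_k\to c$ uniformly on a set of asymptotically full $\nu_{\beta_k}$-measure, where $\nu_\beta=\vol_{g_\beta}/\vol_{g_\beta}(X)\to\nu_\infty$, a probability measure on $[0,\tfrac\pi2]$ by Theorem~\ref{thmB}; integrating, $0=\int_X f_k\,d\nu_{\beta_k}\to c$, so $c=0$, contradicting $|c|=\lim|f_k(x_k)|\cdot(\lim w_{\beta_k}(x_k))^{-\delta}\neq0$. The main obstacle is precisely this last step: assembling the scale-dependent blow-up limits into one function on the limit interval and controlling the transition region and the two ends; the linear-algebra input (invertibility of $L_\beta$ for fixed $\beta$), the parabolicity/Liouville facts for the models, and the Schauder reduction are comparatively routine given the earlier sections.
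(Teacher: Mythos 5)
Your overall strategy—reducing to a $C^0$ estimate via Corollary~\ref{coro:semilocal_estimate}/Remark~\ref{gen operator}, then proving that estimate by a compactness/blow-up contradiction argument with the mean-zero condition as the final input—is the same skeleton as the paper's proof. However, the way you analyze the blow-up limits has a genuine error.

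You claim that, after the appropriate rescaling, the limit equation is always $\Delta_\infty \hat f_\infty = 0$, and you invoke a Liouville theorem on the various model spaces to force $\hat f_\infty$ to be a constant. This is false in the crucial regime where $x_\beta \to D$ with $u(x_\beta)\leq \eta < 0$ (i.e. in the ``bulk'' of the collapsed region, $s$ bounded away from $0$ and $\frac\pi2$). There the relevant rescaling is $\beta^{-1}g_\beta$, which gives the 1-collapsed geometry; the metric Laplacian acting on functions of $u$ alone converges to the bounded operator $\frac{1}{\varphi_1''(u)}\partial_u^2 + \frac{n-1}{\varphi_1'(u)}\partial_u$, and the $+1$ in $L_\beta=\tfrac12\Delta_{g_\beta}+1$ does \emph{not} vanish in the limit. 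The limit equation is the ODE
\[
\frac{f''(u)}{\varphi_1''(u)}+(n-1)\frac{f'(u)}{\varphi_1'(u)}+f(u)=0,
\]
whose solutions are not constants: one solution is $\varphi_1'(u)$ and the other grows like $u$ at $-\infty$. The paper rules out the second by the decay imposed by $\|f\|_{C^0_\delta}\leq 1$ and rules out $\varphi_1'(u)$ by the integral (mean-zero) condition $\int_{-\infty}^0 f(u)\varphi_1''(u)\varphi_1'(u)^{n-1}\,du=0$. Your constancy-and-Liouville argument simply does not apply here: the model problem is a nondegenerate second-order ODE with a zeroth-order term, not a harmonic equation. (Your observation that the zeroth-order term drops out \emph{is} correct in the two other regimes—the intermediate scale where one rescales by $\varepsilon_\beta^{-1-\frac1n}$, and the Tian--Yau end—because the rescaling factor kills the constant; but you cannot treat all three cases uniformly.)

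A secondary weakness is the ``globalization'' paragraph. Once one knows the possible limit is $c\,\varphi_1'(u)$ rather than a constant, it is exactly the mean-zero identity (\ref{eq:55}) that kills it—no patching of constants over a subinterval is needed, and there is no issue with the limit constants ``matching across the gluing zone.'' Your reasoning that the maximizer must stay where $w_{\beta_k}$ is bounded below is also not how the argument should go: the paper handles all three locations of the maximizer directly, each producing a nontrivial limit function on a limit space, and eliminates each by a tailored argument. The Schauder reduction and the use of compactness are fine, and the decay constraint you write down ($|\hat f_\infty|\leq |w|^{-\delta}$, which is polynomial growth, not boundedness) does appear correctly as (\ref{eq:54}); but the centerpiece of the contradiction—solving the limit ODE and exploiting both the decay and the integral constraint—is missing from your proposal.
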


We deduce:
\begin{cor}
\label{coro1}
  Fix $\delta\in(0,1)$. The operator $L_\beta :C^{2,\alpha}_\delta\rightarrow C^\alpha_{\delta+1+\frac1n}$ satisfies
  \begin{equation}
 \| L_\beta^{-1} \| \lessapprox \beta^{-\frac1n-\delta}.\label{eq:51}
\end{equation}
\end{cor}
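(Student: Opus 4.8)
The plan is to exploit the $L^2(d\vol_{g_\beta})$-orthogonal decomposition of functions into constants and functions of mean zero. With respect to it $L_\beta=\tfrac12\Delta_{g_\beta}+1$ is diagonal, and the whole statement reduces to Proposition~\ref{prop:bound_L-1} together with one elementary estimate on averages.

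\emph{Structure.} First I would record that $L_\beta$ is self-adjoint for $d\vol_{g_\beta}$, that it acts as the identity on constants, and that it preserves the subspace of mean-zero functions (because $\int_X\Delta_{g_\beta}f\,d\vol_{g_\beta}=0$). On that subspace Proposition~\ref{prop:bound_L-1} gives the $\beta$-uniform bound $\|f\|_{C^{2,\alpha}_\delta}\le c\,\|L_\beta f\|_{C^\alpha_{\delta+1+\frac1n}}$; in particular $\ker L_\beta$ is trivial (a nonzero kernel element would, after subtracting its average, contradict this bound together with $L_\beta 1=1$), so $L_\beta$ is invertible --- by Corollary~\ref{coro:semilocal_estimate} and the compact embedding $C^{2,\alpha}_\delta\hookrightarrow C^0_\delta$ it is Fredholm of index zero, being self-adjoint --- with $\|L_\beta^{-1}\|\le c$ on mean-zero functions. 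Now, given $g\in C^\alpha_{\delta+1+\frac1n}$, set $\bar g:=\frac1{\vol_{g_\beta}(X)}\int_Xg\,d\vol_{g_\beta}$ and $g^\perp:=g-\bar g$, so $f:=\bar g+L_\beta^{-1}g^\perp$ solves $L_\beta f=g$. Since the weight obeys $\beta\lessapprox w_\beta\lessapprox1$ and attains a value $\approx1$ on $\{u\le-2\}$, every constant $c_0$ satisfies $\|c_0\|_{C^{2,\alpha}_\delta}\approx|c_0|\approx\|c_0\|_{C^\alpha_{\delta+1+\frac1n}}$, whence
\[
\|f\|_{C^{2,\alpha}_\delta}\le\|\bar g\|_{C^{2,\alpha}_\delta}+c\,\|g^\perp\|_{C^\alpha_{\delta+1+\frac1n}}\lessapprox|\bar g|+\|g\|_{C^\alpha_{\delta+1+\frac1n}},
\]
and it only remains to prove $|\bar g|\lessapprox\beta^{-\frac1n-\delta}\,\|g\|_{C^0_{\delta+1+\frac1n}}$.

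\emph{The average estimate.} Writing $s:=\delta+1+\tfrac1n$, one has $|\bar g|\le\vol_{g_\beta}(X)^{-1}\,\|g\|_{C^0_{\delta+1+\frac1n}}\int_Xw_\beta^{-s}\,d\vol_{g_\beta}$, so I would compute both factors from the explicit Calabi metric \eqref{eq:10}. On $U_L$ one has $w_\beta\approx|u|$ for $|u|\lessapprox1$, and by \eqref{eq:10} the $g_\beta$-volume density in the variable $u$ equals $\approx\beta^n\varphi_1''(u)(-\varphi_1'(u))^{n-1}$, which by \eqref{eq:11bis} and \eqref{eq:11} is $\approx1$ for $|u|$ small and integrable as $u\to-\infty$; on $X\setminus U_L$ the metric is $\beta^{1+\frac1n}g_{TY}$ on a fixed compact set, where $w_\beta\approx\beta$. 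In particular $\vol_{g_\beta}(X)\approx\beta^n$ and, more precisely, $\vol_{g_\beta}(\{w_\beta\le\lambda\})\lessapprox\beta^n\lambda$ for $\beta\lessapprox\lambda\lessapprox1$. A layer-cake integration then gives
\[
\int_Xw_\beta^{-s}\,d\vol_{g_\beta}\ \lessapprox\ \beta^n+\beta^n\int_\beta^{1}r^{-s}\,dr\ \approx\ \beta^{n+1-s},
\]
using $s>1$. Dividing by $\vol_{g_\beta}(X)\approx\beta^n$ yields $|\bar g|\lessapprox\beta^{1-s}\|g\|_{C^0_{\delta+1+\frac1n}}=\beta^{-\frac1n-\delta}\|g\|_{C^0_{\delta+1+\frac1n}}$, which combined with the previous display proves \eqref{eq:51}.

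\emph{Main difficulty.} I expect the only real work to be the last displayed estimate: one has to check, region by region, that $w_\beta^{-s}\,d\vol_{g_\beta}$ integrates to $O(\beta^{n+1-s})$ and not merely to the naive $O(\beta^{n-s})$ that $w_\beta\ge\beta$ alone would give --- the gained factor $\beta$ being exactly the small $g_\beta$-volume $\approx\beta^{n+1}$ of the part of $X$ on which $w_\beta$ is as small as $\beta$ (the Tian--Yau collar). Conceptually this is the whole content of the corollary: both $L_\beta$ on constants (it is the identity) and $L_\beta$ on mean-zero functions (Proposition~\ref{prop:bound_L-1}) are uniformly invertible, and the factor $\beta^{-\frac1n-\delta}$ arises solely because the $L^2(d\vol_{g_\beta})$-orthogonal projection onto constants fails to be uniformly bounded in the weighted Hölder norm of $C^\alpha_{\delta+1+\frac1n}$ (it is, however, uniformly bounded on $C^{2,\alpha}_\delta$, since $\delta<1$).
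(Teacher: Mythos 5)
Your argument is correct and follows essentially the same route as the paper: decompose into constants and mean-zero functions, use Proposition~\ref{prop:bound_L-1} on the mean-zero part and the identity action of $L_\beta$ on constants, and reduce to the bound $|\bar f|\lessapprox\beta^{-\frac1n-\delta}\|f\|_{C^0_{\delta+1+\frac1n}}$, which both you and the paper establish from $\vol_{g_\beta}(X)\approx\beta^n$ and $\int_X w_\beta^{-\delta'}\,d\vol_{g_\beta}\approx\beta^{n+1-\delta'}$ (the paper just asserts this integral estimate, whereas you spell out a layer-cake computation that produces it). The added remarks on self-adjointness and Fredholm index zero are not in the paper's proof but are harmless embellishment rather than a different method.
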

\begin{proof}[Proof of Corollary~\ref{coro1}]
  The constant function $1$ satisfies $\|1\|_{C_\delta}=1$ as soon as $\delta\geq0$. Since $L_\beta (1)=1$, the estimate (\ref{eq:45}) is also satisfied on constants.

 Set $\delta':=\delta+1+\frac 1n$. Given $f\in C^\alpha_{\delta'}$ we decompose $f=\bar f+f_1$ with $\bar f$ constant and $\int_X f_1 d\vol_{g_\beta}=0$. 
We have $L_\beta^{-1}f=L_{\beta}^{-1}f_1+\bar f$ so by \eqref{eq:45}, we have $\|L_\beta^{-1}f\|_{C^{\alpha}_\delta}\lessapprox \|f\|_{C^{\alpha}_{\delta'}}+|\bar f|$ and we are reduced to showing that
 \begin{equation}
 \label{constantes}
 |\bar f| \lessapprox \beta^{-\frac1n-\delta}  \|f\|_{C^0_{\delta'}}.
 \end{equation}
 Now it follows from \eqref{eq:23} that on $\{u\le -\beta^\mu/2\}$, we have $d\vol_{g_\beta} \approx \beta^n d\vol_{g_X}$ for some fixed Riemannian metric $g_X$ on $X$ while on $\{\tilde u \ge -\beta^\mu/2\}$, we have $g_\beta=\beta^{1+\frac 1n}g_{TY}$ and, in particular, $\int_{\{\tilde u \ge -\beta^\mu/2\}}d\vol_{g_\beta} =O(\beta^{n+1-\mu})$. It follows that  $\Vol(g_\beta) \approx \beta^n$.

Now, since $|f| \leq \|f\|_{C^0_{\delta'}} w_\beta^{-\delta'}$ we have,
  \begin{align*}
    |\bar f| & \leq \frac1{\Vol(g_\beta)}\int_x |f| d\vol_{g_\beta} \\
           & \lessapprox \frac{\|f\|_{C^0_{\delta'}}}{\beta^n} \int_X w_\beta^{-\delta'} d\vol_{g_\beta}.
  \end{align*}
  One checks that since $\delta'>1$, the main contribution of the last integral is on the Tian-Yau part $X\setminus U_L$, and is of order $\beta^{n+1-\delta'}$. We therefore get $ |\bar f| \lessapprox \beta^{1-\delta'}  \|f\|_{C^0_{\delta'}}$ and \eqref{constantes} is proved. This ends the proof of the corollary.
\end{proof}

The rest of this section is devoted to the proof of Proposition \ref{prop:bound_L-1}. 

Suppose (\ref{eq:45}) is not true. By Corollary~\ref{coro:semilocal_estimate} and Remark~\ref{gen operator} there exist functions $f_\beta$ such that $\int_X f_\beta d\vol_{g_\beta}=0$ and
\[ \|f_\beta\|_{C^0_\delta} = 1, \qquad \|L_\beta f_\beta\|_{C^\alpha_{\delta+1+\frac1n}} \rightarrow 0, \]
while $\|f_\beta\|_{C^{2,\alpha}_\delta}$ is bounded. Fix $x_\beta\in X$ such that $w_\beta(x_\beta )^\delta|f_\beta(x_\beta )|=1$. We then analyze the various cases depending on the limit of $x_\beta $.

\subsubsection*{First case} The $x_\beta $'s converge to $D$ and $u(x_\beta )<\eta<0$. 

 For now, we restrict the functions $f_\beta$ on $U_L$ and see them as functions on a fixed neighborhood $U=\{\|v\|^2_h < e^{-1}\}$ of the zero section in $L$. In this proof only, we emphasize the dependence of $u$ in $\beta$ and denote it $u_\beta$. We use the diffeomorphism $\Phi_\beta:L\setminus D \to L\setminus D, v\mapsto \|v\|^{\frac 1 \beta -1} \cdot v$, so that $\Phi_\beta^*u_\beta=u: v\mapsto \log \|v\|^2_h$ and $U_\beta:=\Phi_\beta^{-1}(U)= \{\|v\|^2_h < e^{-{\beta}}\}$ is essentially independent of $\beta$. Of course, $\Phi_\beta^*g_\beta$ is still given by \eqref{eq:10}, but there is no more hidden dependence on $\beta$ in that expression. Set $F_\beta=\Phi_\beta^*f_\beta$. Since $\|f_\beta\|_{C^{2,\alpha}_\delta}$ remains bounded, we can extract a limit $F_\beta \rightarrow f$. It turns out that $f$ depends on $u$ only because the norm $\|df_\beta\|_{C^0_{\delta+1}}$ involves a factor $\beta^{-1}$ in the circle direction or $\beta^{-1/2}$ in the divisor $D$ direction. It is possible that $y_\beta:=\Phi_\beta^{-1}(x_\beta)$ satisfies $u(y_\beta)\to -\infty$ but the $C^1$ bound on $F_\beta$ ensures that one can find another point $z_\beta$ with $\eta\ge u(z_\beta)\ge -C$ and $|F_\beta(z_\beta)| \ge \frac 12$ for some $C>0$. 

Therefore we have a non-zero limit $f(u)$ for $u\in (-\infty,0)$ which satisfies
  \begin{gather}
    \sup |w|^{\delta} |f|=1, \label{eq:54} \\
    \frac{f''(u)}{\varphi_1''(u)}+(n-1)\frac{f'(u)}{\varphi_1'(u)}+f(u)=0, \label{eq:53}
  \end{gather}
 where $w(u)=1$ if $u\le -2$ and $w(u)=u$ if $u\ge -1/2$. Note that the limit Laplacian
    \begin{equation}
 \Delta_{\nu_\infty}:=\frac12\big(\frac1{\varphi_1''(u)}\partial_u^2+\frac{n-1}{\varphi_1'(u)}\partial_u\big)\label{eq:66}
\end{equation}
is the Bakry-Emery Laplacian of $(g_\infty,\nu_\infty)$, a general fact in measured Gromov-Hausdorff convergence.

  Moreover
  \[ \frac1{\beta^n} \left|\int_{\tilde u\geq - \beta} f_\beta d\vol_{g_\beta}\right| \leq \beta^{1-\delta} \|f_\beta\|_{C^0_\delta} \]
  so if $\delta<1$ we have that
  \[ \frac1{\beta^n} \int_X f_\beta d\vol_{g_\beta} \rightarrow c \int_{-\infty}^0 f(u) \varphi_1''(u)\varphi_1'(u)^{n-1}du \]
  for some constant $c$, so the limit $f(u)$ satisfies
  \begin{equation}
\int_{-\infty}^0 f(u) \varphi_1''(u)\varphi_1'(u)^{n-1}du =0.\label{eq:55}
\end{equation}

The function $\varphi_1'(u)$ is an obvious solution of (\ref{eq:53}), it corresponds to the dilation vector field in the bundle $L$. It satisfies $\varphi_1'(u) \rightarrow -1$ when $u\rightarrow -\infty$, while the other solution is $f=g\varphi_1'$ with $g=\int \frac{du}{(1-e^{-\varphi_1})}$. By \eqref{t - infty 2}, $f(u)\approx u$ near $-\infty$ (it corresponds to the Green function near $D$). But this is ruled out by (\ref{eq:54}). Therefore we see that up to a constant we must have $f(u)=\varphi_1'(u)$, which gives a contradiction with (\ref{eq:55}).

  \subsubsection*{Second case} The $x_\beta $'s still converge to $D$ but we can extract a limit only in the intermediate region between the normal bundle $L$ and the Tian-Yau metric on $X\setminus D$: this is the case where $u(x_\beta )\rightarrow0$ but $\frac1\beta u(x_\beta )\rightarrow-\infty$. It is similar to the previous one; set   $\ep_\beta:=-u(x_\beta), v:=\frac{u}{\ep_\beta}$ and consider the rescaled functions $h_\beta:=\ep_\beta^{\delta} f_\beta$. It satisfies $h_\beta \le |v|^{-\delta}$ while on the compact sets of $v\in (-\infty,\frac{\beta}{\ep_\beta}]$, the metric $g_\beta$ is asymptotically close to \[ \ep_\beta^{1+\frac 1n}\left(\tfrac1n |v|^{-1+\frac1n} (\tfrac12 dv^2 + 2 \big(\frac{\beta}{\ep_\beta}\big)^2 \eta^2) + \frac{\beta}{\ep_\beta}\cdot |v|^{\frac1n} g_D\right)\]
    by \eqref{eq:11bis}, with $\frac{\beta}{\ep_\beta}\to 0$.

    Geometrically this amounts to saying that we have the following convergence in the measured Gromov-Hausdorff sense: for some constant $C>0$,

    \[ \big(X,\varepsilon_\beta^{-1-\frac1n}g_\beta,x_\beta,\frac{d\vol_{g_\beta}}{\vol_{g_\beta}(B(x_\beta,1))}\big)
      \longrightarrow \big((-\infty,0), \frac1{2n}|v|^{-1+\frac1n}dv^2,-1,C dv\big).\]

    Our Hölder estimates imply similarly to the previous case that $h_\beta$ has a non-zero limit $h$  which is a function of $v$ only. Since $L_\beta=\frac12(\Delta_{g_\beta}+1)=\varepsilon_\beta^{-1-\frac1n}(\frac12 \Delta_{\varepsilon_\beta^{-1-\frac1n}g_\beta}+\varepsilon_\beta^{1+\frac1n})$, the constant term in $L_\beta$ disappears in the limit, so the limit function $h$ is harmonic with respect to the limit Bakry-Emery Laplacian, which is now given by equation (\ref{eq:66}) with $\varphi'(u)=(-u)^{\frac1n}$. Finally this gives the equation
\[ n (-v)^{1-\frac1n} \partial_v^2 h - (n-1) (-v)^{-\frac1n} \partial_v h = 0. \]
Hence $h$ is a linear combination of the harmonic functions $1$ and $|v|^{\frac1n}$, but it also satisfies $h\le |v|^{-\delta}$. We derive a contradiction when $v\rightarrow -\infty$.

\subsubsection*{Third case} The $x_\beta $'s converge in the Tian-Yau part: $u(x_\beta )=O(\beta)$. This is similar: after rescaling, we get a nonzero limit $f$ which is a nonzero harmonic function on the Tian-Yau space $X\setminus D$ with $|f|\leq |\tilde t|^{-\delta}$. This implies $f=0$, which is a contradiction.

\subsection{Resolution of the Kähler-Einstein equation}
\label{sec:resol-kahl-einst}

We first control the quadratic terms of the equation:
\begin{lem}
  For $\delta\geq 0$ and any function $\varphi$ we have
  \begin{equation}
    \label{eq:52}
    \| (\partial \dbar \varphi)^2 \|_{C^\alpha_\delta} \lessapprox \beta^{-\delta} \| \partial \dbar \varphi \|_{C_\delta^\alpha}^2
  \end{equation}
\end{lem}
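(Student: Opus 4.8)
The plan is to reduce the estimate to the standard multiplicative inequality for (weighted) Hölder norms, the one genuinely new input being the elementary geometric fact that $\inf_X w_\beta \approx \beta$ — the weight drops to its minimal value $\beta$ on the Tian-Yau region $X\setminus U_L$ and on the part of $U_L$ where $t\approx -1$, and is $\approx 1$ only near the divisor. Writing $T:=i\partial\dbar\varphi$, so that $\|(\partial\dbar\varphi)^2\|_{C^\alpha_\delta}=\|T\wedge T\|_{C^\alpha_\delta}$, the pointwise bounds $|T\wedge T|_{g_\beta}\lessapprox|T|_{g_\beta}^2$ and $|T|_{g_\beta}\le w_\beta^{-\delta}\|T\|_{C^0_\delta}\lessapprox\beta^{-\delta}\|T\|_{C^0_\delta}$ (the last step using $\delta\ge0$ and $\inf_X w_\beta\approx\beta$) immediately give the $C^0$ part:
\[
\sup_X w_\beta^\delta|T\wedge T|_{g_\beta}\lessapprox \sup_X\big(w_\beta^\delta|T|_{g_\beta}\big)\,|T|_{g_\beta}\le\|T\|_{C^0_\delta}\sup_X|T|_{g_\beta}\lessapprox\beta^{-\delta}\|T\|_{C^0_\delta}^2 .
\]

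For the Hölder seminorm I would write $w_\beta^\delta\,T\wedge T=(w_\beta^\delta T)\wedge T$ and estimate the increment of $w_\beta^\delta\,T\wedge T$ between two points $x,y$ with $d_{g_\beta}(x,y)<\rho\big(\beta w_\beta^{1/n}(x)\big)^{1/2}$. On such a ball the geometry of the rescaled metric $\beta^{-1}w_\beta^{-1/n}g_\beta$ is uniformly controlled (as exploited throughout section~\ref{sec:semi-local-schauder}), so the tensors at $x$ and $y$ can be compared and one has $|T\wedge T(x)-T\wedge T(y)|\lessapprox(|T(x)|+|T(y)|)\,|T(x)-T(y)|$; moreover, since $|\nabla w_\beta^{\bullet}|_{g_\beta}=O\big(w_\beta^{\bullet-\frac12(1+\frac1n)}\big)$ and $w_\beta\ge\beta$, the oscillation of $w_\beta^\delta$ over such a ball is $\lessapprox w_\beta(x)^\delta$, so $w_\beta^\delta\approx w_\beta(x)^\delta$ there. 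Decomposing
\[
|w_\beta^\delta T\wedge T(x)-w_\beta^\delta T\wedge T(y)|\le w_\beta(x)^\delta|T\wedge T(x)-T\wedge T(y)|+|w_\beta(x)^\delta-w_\beta(y)^\delta|\,|T\wedge T(y)| ,
\]
the first term contributes $\lessapprox\sup_X|T|_{g_\beta}\cdot[w_\beta^\delta T]_\alpha\lessapprox\beta^{-\delta}\|T\|_{C^0_\delta}\|T\|_{C^\alpha_\delta}$ (after absorbing the mismatch between $w_\beta(x)^\delta(T(x)-T(y))$ and the true increment of $w_\beta^\delta T$ into a term of the second type). For the second term one uses $|w_\beta(x)^\delta-w_\beta(y)^\delta|\lessapprox d_{g_\beta}(x,y)\,w_\beta(x)^{\delta-\frac12(1+\frac1n)}$, $|T(y)|^2\le w_\beta(y)^{-2\delta}\|T\|_{C^0_\delta}^2$, and $d_{g_\beta}(x,y)^{1-\alpha}\lessapprox\big(\beta w_\beta^{1/n}(x)\big)^{(1-\alpha)/2}$; carrying through the seminorm weight $\big(\beta w_\beta^{1/n}\big)^{\alpha/2}$ produces $\lessapprox\beta^{1/2}w_\beta^{-\delta-\frac12}\|T\|_{C^0_\delta}^2\lessapprox\beta^{-\delta}\|T\|_{C^0_\delta}^2$, once again because $w_\beta\ge\beta$. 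Summing the two contributions gives $[w_\beta^\delta\,T\wedge T]_\alpha\lessapprox\beta^{-\delta}\|T\|_{C^\alpha_\delta}^2$, which with the $C^0$ bound proves the lemma.

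The only delicate point is the bookkeeping in the last step: one must verify that the various negative powers of $w_\beta$ created by differentiating the weight and by the bound $|T|\le w_\beta^{-\delta}\|T\|_{C^0_\delta}$ combine, in total, to no worse than $w_\beta^{-\delta}\lessapprox\beta^{-\delta}$, and this is exactly where restricting to the small balls of the seminorm — which yields the gain $d_{g_\beta}(x,y)^{1-\alpha}\lessapprox(\beta w_\beta^{1/n})^{(1-\alpha)/2}$ — and the identity $\inf_X w_\beta\approx\beta$ are both used. Alternatively, one can phrase everything as a single application of Remark~\ref{rem:product} to $w_\beta^\delta\,T\wedge T=(w_\beta^{\delta/2}T)\wedge(w_\beta^{\delta/2}T)$, after checking $\|w_\beta^{\pm\delta/2}\|_{C^\alpha(g_\beta)}\lessapprox\beta^{\mp\delta/2}$ by the same weight estimates; the direct computation above is essentially the unwinding of this.
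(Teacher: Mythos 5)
Your argument is correct and matches the paper's (very terse) proof in substance. The paper simply cites Remark~\ref{rem:product} for $\delta=0$ and then observes that the weight mismatch $w_\beta^{\delta}$ vs.\ $w_\beta^{2\delta}$ contributes a factor $w_\beta^{-\delta}\le\beta^{-\delta}$; your two-step derivation ($C^0$ part plus Hölder seminorm, with the final remark that this is the same as applying rem:product to $w_\beta^{\delta/2}T$ together with $\|w_\beta^{\pm\delta/2}\|_{C^\alpha}\lessapprox\beta^{\mp\delta/2}$) is precisely a careful unpacking of that sentence.
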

\begin{proof}
  From Remark \ref{rem:product} we see that for $\delta=0$ we have the estimate. The weight $w_\beta^\delta$ introduces a coefficient $w_\beta^{-\delta}$ in the estimates, which is maximal when the weight $w_\beta$ is minimal, that is on the Tian-Yau part where the value of $w_\beta$ is $\beta$. The lemma follows.
\end{proof}

\begin{proof}[End of the proof of Theorem~\ref{thmB}]
Decompose $P_\beta $ into an affine part and a higher order term part:
\[ P_\beta (\varphi) = P_\beta (0) + L_\beta (\varphi) + Q_\beta (\varphi). \]
It follows from the lemma that if $\|\partial \dbar \varphi\|_{C^\alpha_{\delta+1+\frac1n}}<\varepsilon \beta^{1+\frac1n+\delta}$ for a small enough $\varepsilon>0$, then
\begin{equation}
  \label{eq:57}
  \| Q_\beta (\varphi) - Q_\beta (\psi) \|_{C^\alpha_{\delta+1+\frac1n}} \lessapprox \beta^{-1-\frac1n-\delta} \| \varphi-\psi \|_{C^{2,\alpha}_\delta} \big( \| \varphi \|_{C^{2,\alpha}_\delta} + \| \psi \|_{C^{2,\alpha}_\delta} \big).
\end{equation}

On the other hand from (\ref{eq:22}) we have $|P_\beta(0)|\lessapprox \tilde u^{1+\frac1n}$ for $u\geq -2\beta^\mu$, and $|P_\beta(0)|\lessapprox e^{(\frac12-\varepsilon)\frac u\beta}$ for $u\leq -2 \beta^\mu$. These bounds give the worst control of $\sup |w_\beta^{\delta+1+\frac1n}P_\beta(0)|$ around $u=-\beta^\mu$ and it follows that $\|P_\beta(0)\|_{C^0_{\delta+1+\frac1n}} \lessapprox \beta^{\mu(2(1+\frac1n)+\delta)}$. Adding the seminorm $[\cdot]_\alpha$ does not change the bound and we get
\begin{equation}
  \label{eq:58}
  \| P_\beta (0) \|_{C^\alpha_{\delta+1+\frac1n}} \lessapprox \beta^{\mu(2(1+\frac1n)+\delta)} .
\end{equation}

Fix $\delta$ small enough (the precise bound will be fixed below). Given the bound (\ref{eq:51}), together with (\ref{eq:57}) and (\ref{eq:58}), standard fixed point arguments (see for example \cite[Lemma 1.3]{BiqMin11}) now imply that if
\begin{equation}
  \label{eq:63}
  \beta^{\mu(2(1+\frac1n)+\delta)} \ll \beta^{1+\frac3n+3\delta}
\end{equation}
then the equation $P_\beta (\varphi)=0$ has a unique solution $\varphi(\beta)$ in a ball of radius $\varepsilon \beta^{1+\frac2n+2\delta}$ in $C^{2,\alpha}_\delta$, and this solution actually satisfies 
\begin{equation}
\label{fine control}
\|\varphi(\beta)\|_{C^{2,\alpha}_\delta} \lessapprox \beta^{\mu(2(1+\frac1n)+\delta)-\frac1n-\delta}.
\end{equation}
Observe that for $\mu=1$ the inequality (\ref{eq:63}) is satisfied if $2\delta<1-\frac1n$. Now fix $\delta<\frac12(1-\frac1n)$. We use the flexibility of fixing $\mu\in (0,1)$ as close to $1$ as we want, which geometrically corresponds to gluing more near the Tian-Yau part (then the error is smaller since the Calabi ansatz gives an exact solution up to exponentially small terms). So we fix $\mu<1$ so that (\ref{eq:63}) is satisfied, and these values of $\delta$ and $\mu$ enable us to solve the problem. Moreover for any $\epsilon>0$ we can find values of $(\mu,\delta)$ close to $(1,0)$ so that (\ref{fine control}) gives the following bound on the solution $\varphi(\beta)$:
\begin{equation}
  \label{eq:64}
  \|\varphi(\beta)\|_{C^{2,\alpha}_\delta} \lessapprox \beta^{2+\frac1n-\epsilon}.
\end{equation}

By \cite[Theorem~7.3]{Berndtsson15}, the Kähler-Einstein metric $\wh \omega_\beta:=\omega_\beta+i\partial \dbar \varphi(\beta)$ is the unique Kähler-Einstein metric with cone angle $2\pi \beta$ along $D$, i.e. satisfying $\Ric \wh \omega_\beta=\wh \omega_\beta+(1-\beta)[D]$. Given any compact set $M\Subset X\setminus D$, we have for $\beta\ll 1$: 
\[w_\beta|_M \equiv \beta, \quad \omega_\beta|_M=\beta^{1+\frac 1n} \om_{TY}, \quad \|\beta^{-1-\frac 1n} \partial \dbar \varphi(\beta)\|_{\omega_{TY}}\lessapprox \beta^{1-\epsilon-\delta}\]
where the last estimate follows by \eqref{eq:64}. In particular, $\|\beta^{-1-\frac 1n}\wh \omega_\beta-\om_{TY}\|_{\om_{TY}}=O(\beta^{1-\epsilon-\delta})$ on $M$, and we are done with the first part of Theorem~\ref{thmB}, since higher estimates are obtained as usual by bootstrapping.

At this point of the proof, we are essentially done but for exposition purposes, we collect the remaining statements in Theorem~\ref{thmB} and the remarks following it in:

\begin{lem}
\label{rescaling}
Given $p\in X$ and a non-decreasing family $(\ep_\beta)$ of positive numbers, the pointed Gromov-Hausdorff limits of the rescaled Kähler-Einstein metric $(X,\ep_\beta^{-1}\hat g_\beta, p)$ coincide with that of the model space $(X,\ep_\beta^{-1} g_\beta, p)$ given in section~\ref{sec:geometry}.
\end{lem}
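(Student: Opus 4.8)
The plan is to deduce the lemma from the fine estimate \eqref{eq:64} on the Kähler-Einstein potential: the essential point is that $\hat g_\beta$ and $g_\beta$ are uniformly bi-Lipschitz with Lipschitz constant tending to $1$, a property that survives any rescaling and is compatible with the renormalized volume measures, so neither $(X,\ep_\beta^{-1}\hat g_\beta,p)$ nor $(X,\ep_\beta^{-1}g_\beta,p)$ can have more (measured, pointed) Gromov-Hausdorff limits than the other. First I would recall that $\hat g_\beta$ is the Riemannian metric of $\hat\omega_\beta=\omega_\beta+i\partial\dbar\varphi(\beta)$, where by \eqref{eq:64} one may fix the parameters $(\mu,\delta)$ so that $\|\varphi(\beta)\|_{C^{2,\alpha}_\delta}\lessapprox\beta^{2+\frac1n-\epsilon}$ with $\delta,\epsilon>0$ as small as wanted; shrinking them we arrange $\delta+\epsilon<1$. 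By the $j=2$ term in the weighted norm \eqref{eq:32}, and because on a Kähler manifold the mixed second covariant derivatives of a function are exactly its ordinary $\partial\dbar$-derivatives, the perturbation tensor $\hat g_\beta-g_\beta$ has $g_\beta$-norm at most $|\nabla^2_{g_\beta}\varphi(\beta)|_{g_\beta}\lessapprox w_\beta^{-\delta-1-\frac1n}\,\beta^{2+\frac1n-\epsilon}$ on $X\setminus D$. Since $w_\beta\geq\beta$ everywhere, with minimum value exactly $\beta$ attained on the Tian-Yau region $X\setminus U_L$, this gives $|\hat g_\beta-g_\beta|_{g_\beta}\lessapprox\beta^{1-\delta-\epsilon}=:\kappa_\beta$ uniformly on $X\setminus D$; as both metrics carry the same cone angle $2\pi\beta$ along $D$, the bound persists across $D$. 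Hence $(1-C\kappa_\beta)\,g_\beta\leq\hat g_\beta\leq(1+C\kappa_\beta)\,g_\beta$ for a dimensional constant $C$ and all small $\beta$, with $\kappa_\beta\to0$.

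From this comparison the geometric assertion follows by soft arguments: for any non-decreasing family $(\ep_\beta)$ the identity map is a $(1+C\kappa_\beta)$-bi-Lipschitz bijection from $(X,\ep_\beta^{-1}g_\beta,p)$ onto $(X,\ep_\beta^{-1}\hat g_\beta,p)$ fixing the basepoint, so the pointed Gromov-Hausdorff distance between the restrictions to balls of radius $R$ is $\lessapprox\kappa_\beta R\to0$; therefore the two families have exactly the same subsequential pointed Gromov-Hausdorff limits, namely those listed in Section~\ref{sec:geometry}. For the measured version I would note that $|\hat g_\beta-g_\beta|_{g_\beta}\leq C\kappa_\beta$ forces $d\vol_{\hat g_\beta}=\rho_\beta\,d\vol_{g_\beta}$ with $|\rho_\beta-1|\lessapprox\kappa_\beta$ pointwise, hence $\Vol(\hat g_\beta)=(1+O(\kappa_\beta))\Vol(g_\beta)$ and the renormalized measures satisfy $d\nu^{\hat g}_\beta=(1+O(\kappa_\beta))\,d\nu^{g}_\beta$; combined with the metric comparison, the identity map realizes a measured Gromov-Hausdorff approximation with error $O(\kappa_\beta)$ on balls of fixed radius, so the measured pointed limits of $(X,\ep_\beta^{-1}\hat g_\beta,p,\nu^{\hat g}_\beta)$ coincide with those computed in Case~A and Case~B of Section~\ref{sec:geometry}.

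The only step that requires genuine care is the passage from the weighted Hölder control of $\varphi(\beta)$ to a pointwise, scale-invariant bound on $\hat g_\beta-g_\beta$ that is uniform all the way down to $D$ — but this is precisely what the weight in \eqref{eq:32} was designed to handle: the worst location is the Tian-Yau region, where $w_\beta=\beta$, and there the resulting bound $\beta^{1-\delta-\epsilon}$ still tends to $0$. Near $D$ the two conical metrics are automatically quasi-isometric, and the $C^{2,\alpha}_\delta$-smallness merely upgrades the quasi-isometry constant to $1+o(1)$, so no new difficulty appears; everything else is the standard fact that a sequence of $(1+o(1))$-bi-Lipschitz homeomorphisms fixing basepoints and $(1+o(1))$-comparing the reference probability measures does not alter pointed measured Gromov-Hausdorff limits.
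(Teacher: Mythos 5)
Your proposal is correct and follows essentially the same route as the paper: both extract from \eqref{eq:64} and the definition \eqref{eq:32} of the weighted norm (with worst weight $w_\beta=\beta$ on the Tian--Yau region) the pointwise comparison $|\hat g_\beta - g_\beta|_{g_\beta}\lessapprox \beta^{1-\delta-\epsilon}\to 0$, observe that this bi-Lipschitz bound is invariant under the common rescaling by $\ep_\beta^{-1}$, and conclude by the definition of pointed Gromov--Hausdorff convergence. The paper's proof is just more terse (it states the scale-invariant bound and says the lemma "follows immediately"); your write-up additionally spells out the measured version, which the lemma statement does not require but which is consistent and correct.
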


\begin{proof}[Proof of Lemma~\ref{rescaling}]
The estimate \eqref{eq:64} shows that $\|\partial \bar \partial \varphi(\beta)\|_{g_\beta} \lessapprox \beta^{1-\delta-\ep}$, so that, in particular, $|\hat g_\beta - g_\beta|_{g_\beta} \lessapprox \sqrt \beta$, or equivalently
\[|\ep_\beta^{-1}\hat g_\beta - \ep_\beta^{-1} g_\beta|_{\ep_\beta^{-1} g_\beta} \lessapprox \sqrt \beta,\]
and the lemma follows immediately from the definition of pointed Gromov-Hausdorff convergence. 
\end{proof}

The proof of Theorem~\ref{thmB} is now complete. 
\end{proof}
\bibliographystyle{alpha}
\bibliography{biblio}

\begin{thebibliography}{AMRT10}

\bibitem[AMRT10]{AMRT}
Avner Ash, David Mumford, Michael Rapoport, and Yung-Sheng Tai.
\newblock {\em Smooth compactifications of locally symmetric varieties}.
\newblock Cambridge Mathematical Library. Cambridge University Press,
  Cambridge, second edition, 2010.
\newblock With the collaboration of Peter Scholze.

\bibitem[And06]{And06}
Michael~T. Anderson.
\newblock {Dehn filling and Einstein metrics in higher dimensions}.
\newblock {\em J. Differ. Geom.}, 73(2):219--261, 2006.

\bibitem[BD91]{BlaDuc91}
J.~Bland and T.~Duchamp.
\newblock Moduli for pointed convex domains.
\newblock {\em Invent. Math.}, 104(1):61--112, 1991.

\bibitem[Ber13]{rber}
Robert~J. Berman.
\newblock A thermodynamical formalism for {M}onge-{A}mp\`ere equations,
  {M}oser-{T}rudinger inequalities and {K}\"ahler-{E}instein metrics.
\newblock {\em Adv. Math.}, 248:1254--1297, 2013.

\bibitem[Ber15]{Berndtsson15}
Bo~Berndtsson.
\newblock A {B}runn-{M}inkowski type inequality for {F}ano manifolds and some
  uniqueness theorems in {K}\"{a}hler geometry.
\newblock {\em Invent. Math.}, 200(1):149--200, 2015.

\bibitem[BG14]{BG}
Robert~J. Berman and Henri Guenancia.
\newblock {K{\"a}hler-Einstein metrics on stable varieties and log canonical
  pairs}.
\newblock {\em Geometric and Function Analysis}, 24(6):1683--1730, 2014.

\bibitem[Biq02]{Biq02}
Olivier Biquard.
\newblock M\'etriques autoduales sur la boule.
\newblock {\em Invent. math.}, 148(3):545--607, 2002.

\bibitem[BM11]{BiqMin11}
Olivier {Biquard} and Vincent {Minerbe}.
\newblock {A Kummer construction for gravitational instantons}.
\newblock {\em {Commun. Math. Phys.}}, 308(3):773--794, 2011.

\bibitem[Bre13]{Brendle}
S.~Brendle.
\newblock {Ricci flat K{\"a}hler metrics with edge singularities }.
\newblock {\em International Mathematics Research Notices}, 24:5727--5766,
  2013.

\bibitem[Cal79]{Cal79}
E.~Calabi.
\newblock M\'etriques k\"ahl\'eriennes et fibr\'es holomorphes.
\newblock {\em Ann. Sci. \'Ecole Norm. Sup. (4)}, 12(2):269--294, 1979.

\bibitem[CGP13]{CGP}
Fr\'{e}d\'{e}ric Campana, Henri Guenancia, and Mihai P\u{a}un.
\newblock Metrics with cone singularities along normal crossing divisors and
  holomorphic tensor fields.
\newblock {\em Ann. Sci. \'{E}c. Norm. Sup\'{e}r. (4)}, 46(6):879--916, 2013.

\bibitem[CKZ11]{CKZ}
U.~Cegrell, S.~Ko{\l}odziej, and A.~Zeriahi.
\newblock Maximal subextensions of plurisubharmonic functions.
\newblock {\em Ann. Fac. Sci. Toulouse Math. (6)}, 20(Fascicule
  Sp\'{e}cial):101--122, 2011.

\bibitem[CR15]{CR15}
Ivan~A. Cheltsov and Yanir~A. Rubinstein.
\newblock Asymptotically log {F}ano varieties.
\newblock {\em Adv. Math.}, 285:1241--1300, 2015.

\bibitem[CY75]{CY3}
S.~Y. Cheng and S.~T. Yau.
\newblock Differential equations on {R}iemannian manifolds and their geometric
  applications.
\newblock {\em Comm. Pure Appl. Math.}, 28(3):333--354, 1975.

\bibitem[DE21]{BE}
Martin {De Borbon} and Gregory {Edwards}.
\newblock {Schauder estimates on products of cones}.
\newblock {\em {Comment. Math. Helv.}}, 96(1):113--148, 2021.

\bibitem[DGG20]{DGG}
Eleonora DiNezza, Vincent Guedj, and Henri Guenancia.
\newblock {Families of singular Kähler-Einstein metrics}.
\newblock Preprint \href{http://arxiv.org/abs/2003.08178}{arXiv:2003.08178}, to
  appear in J. Eur. Math. Soc., 2020.

\bibitem[Don12]{Don}
Simon Donaldson.
\newblock K{\"a}hler metrics with cone singularities along a divisor.
\newblock In {\em Essays in mathematics and its applications}, pages 49--79.
  Springer, Heidelberg, 2012.

\bibitem[DP04]{DP}
Jean-Pierre Demailly and Mihai P{\u{a}}un.
\newblock Numerical characterization of the {K}\"{a}hler cone of a compact
  {K}\"{a}hler manifold.
\newblock {\em Ann. of Math. (2)}, 159(3):1247--1274, 2004.

\bibitem[GP16]{GP}
Henri Guenancia and Mihai P{\u{a}}un.
\newblock {Conic singularities metrics with prescribed Ricci curvature: the
  case of general cone angles along normal crossing divisors}.
\newblock {\em J. Differential Geom.}, 103(1):15--57, 2016.

\bibitem[GS18]{GS2}
Bin Guo and Jian Song.
\newblock {Schauder estimates for equations with cone metrics, II}.
\newblock Preprint \href{http://arxiv.org/abs/1809.03116}{arXiv:1809.03116},
  2018.

\bibitem[GS21]{GS}
Bin {Guo} and Jian {Song}.
\newblock {Schauder estimates for equations with cone metrics. I}.
\newblock {\em {Indiana Univ. Math. J.}}, 70(5):1639--1676, 2021.

\bibitem[Gue20]{C2C}
Henri Guenancia.
\newblock K\"{a}hler--{E}instein metrics: {F}rom cones to cusps.
\newblock {\em J. Reine Angew. Math.}, 759:1--27, 2020.

\bibitem[GY21]{GY}
Yaoting {Gui} and Hao {Yin}.
\newblock {Schauder estimates on smooth and singular spaces}.
\newblock {\em {Ann. Global Anal. Geom.}}, 59(4):457--481, 2021.

\bibitem[{Hei}12]{Hein12}
Hans-Joachim {Hein}.
\newblock {Gravitational instantons from rational elliptic surfaces}.
\newblock {\em {J. Am. Math. Soc.}}, 25(2):355--393, 2012.

\bibitem[HSVZ22]{HSVZ}
Hans-Joachim {Hein}, Song {Sun}, Jeff {Viaclovsky}, and Ruobing {Zhang}.
\newblock {Nilpotent structures and collapsing Ricci-flat metrics on the K3
  surface}.
\newblock {\em {J. Am. Math. Soc.}}, 35(1):123--209, 2022.

\bibitem[JMR16]{JMR}
Thalia Jeffres, Rafe Mazzeo, and Yanir~A. Rubinstein.
\newblock K\"ahler-{E}instein metrics with edge singularities.
\newblock {\em Ann. of Math. (2)}, 183(1):95--176, 2016.
\newblock with an Appendix by C. Li and Y. Rubinstein.

\bibitem[Kob84]{KobR}
R.~Kobayashi.
\newblock {K{\"a}hler-Einstein metric on an open algebraic manifolds}.
\newblock {\em Osaka 1. Math.}, 21:399--418, 1984.

\bibitem[Ko{\l}98]{Kolo}
S{\l}awomir Ko{\l}odziej.
\newblock {The complex Monge-Amp{\`e}re operator}.
\newblock {\em Acta Math.}, 180(1):69--117, 1998.

\bibitem[Lem92]{Lem92}
L{\'a}szl{\'o} Lempert.
\newblock On three-dimensional {C}auchy-{R}iemann manifolds.
\newblock {\em J. Amer. Math. Soc.}, 5(4):923--969, 1992.

\bibitem[LM85]{LocMcO85}
Robert~B. Lockhart and Robert~C. McOwen.
\newblock Elliptic differential operators on noncompact manifolds.
\newblock {\em Ann. Scuola Norm. Sup. Pisa Cl. Sci. (4)}, 12(3):409--447, 1985.

\bibitem[Mok12]{Mok12}
Ngaiming Mok.
\newblock Projective algebraicity of minimal compactifications of
  complex-hyperbolic space forms of finite volume.
\newblock In {\em Perspectives in analysis, geometry, and topology}, volume 296
  of {\em Progr. Math.}, pages 331--354. Birkh\"{a}user/Springer, New York,
  2012.

\bibitem[Mum77]{Mum77}
D.~Mumford.
\newblock {Hirzebruch's proportionality theorem in the non-compact case.}
\newblock {\em Invent. Math.}, 42:239--272, 1977.

\bibitem[Oda20]{Odaka20}
Yuji Odaka.
\newblock {Polystable log Calabi-Yau varieties and gravitational instantons}.
\newblock Preprint
  \href{https://arxiv.org/abs/math/2009.13876}{arXiv:2009.13876}, 2020.

\bibitem[Rub14]{Rub14}
Yanir~A. Rubinstein.
\newblock Smooth and singular {K}\"{a}hler-{E}instein metrics.
\newblock In {\em Geometric and spectral analysis}, volume 630 of {\em Contemp.
  Math.}, pages 45--138. Amer. Math. Soc., Providence, RI, 2014.

\bibitem[RZ20]{RubiZhang20}
Yanir~A. Rubinstein and Kewei Zhang.
\newblock Small angle limits of {H}amilton's footballs.
\newblock {\em Bull. Lond. Math. Soc.}, 52(1):189--199, 2020.

\bibitem[SW16]{SW}
Jian Song and Xiaowei Wang.
\newblock The greatest {R}icci lower bound, conical {E}instein metrics and
  {C}hern number inequality.
\newblock {\em Geom. Topol.}, 20(1):49--102, 2016.

\bibitem[SZ19]{SZ19}
Song {Sun} and Ruobing {Zhang}.
\newblock {Complex structure degenerations and collapsing of Calabi-Yau
  metrics}.
\newblock Preprint \href{http://arxiv.org/abs/1906.03368}{arXiv:1906.03368},
  2019.

\bibitem[TY87]{TY87}
Gang Tian and Shing-Tung Yau.
\newblock {Existence of K{\"a}hler-Einstein metrics on complete K{\"a}hler
  manifolds and their applications to algebraic geometry}.
\newblock {\em Adv. Ser. Math. Phys. 1}, 1:574--628, 1987.
\newblock Mathematical aspects of string theory (San Diego, Calif., 1986).

\bibitem[TY90]{TY1}
Gang Tian and Shing-Tung Yau.
\newblock Complete {K}{\"a}hler manifolds with zero {R}icci curvature. {I}.
\newblock {\em J. Amer. Math. Soc.}, 3(3):579--609, 1990.

\bibitem[Wan06]{Wang06}
Xu-Jia Wang.
\newblock Schauder estimates for elliptic and parabolic equations.
\newblock {\em Chinese Ann. Math. Ser. B}, 27(6):637--642, 2006.

\end{thebibliography}

\end{document}